\newtheorem{thm}{Theorem}[section]
\newtheorem{lem}[thm]{Lemma}
\newtheorem{cor}[thm]{Corollary}
\newtheorem{prop}[thm]{Proposition}
\newtheorem{rem}{Remark}[section]
\numberwithin{equation}{section}
\renewcommand{\a}{\alpha}
\renewcommand{\b}{\beta}
\newcommand{\e}{\varepsilon}
\newcommand{\de}{\delta}
\newcommand{\fa}{\varphi}
\newcommand{\ga}{\gamma}
\newcommand{\la}{\lambda}
\newcommand{\De}{\Delta}
\def\R{{\mathbb{R}}}
\def\Z{{\mathbb{Z}}}
\def\T{{\mathbb{T}}}
\title{Asymptotics of PDE in random environment \\
by paracontrolled calculus}
\author{Tadahisa Funaki$^\ast$, Masato Hoshino$^\star$,
Sunder Sethuraman$^\dagger$ and Bin Xie$^\ddagger$}
\date{\today } 
\begin{document}
\maketitle

\begin{abstract}
\noindent
We apply the paracontrolled calculus to study the asymptotic behavior of
a certain quasilinear PDE with smeared mild noise, which originally
appears as the space-time scaling limit of a particle system in random environment
on one dimensional discrete lattice.  We establish the convergence result and
show a local in time well-posedness of the limit stochastic PDE with spatial white noise.
It turns out that our limit stochastic PDE does not require any renormalization.
We also show a comparison theorem for the limit equation.
\end{abstract}

\footnote{ \hskip -6.5mm
$^\ast$Department of Mathematics,
School of Fundamental Science and Engineering,
Waseda University,
3-4-1 Okubo, Shinjuku-ku,
Tokyo 169-8555, Japan.
e-mail: funaki@ms.u-tokyo.ac.jp \\
$^\star$Faculty of Mathematics, Kyushu University,
744 Motooka, Nishi-ku, Fukuoka 819-0395, Japan.   \\
e-mail: hoshino@math.kyushu-u.ac.jp\\
$^\dagger$Department of Mathematics,
University of Arizona,
621 N. Santa Rita Ave.,
Tucson, AZ 85750, USA.
e-mail: sethuram@math.arizona.edu \\
$^\ddagger$Department of Mathematical Sciences,
Faculty of Science, Shinshu University,
3-1-1 Asahi, Matsumoto, Nagano 390-8621, Japan.
e-mail: bxie@shinshu-u.ac.jp; bxieuniv@outlook.com
}

\vskip 1.5mm
\noindent {\it{Keywords: }}{Paracontrolled calculus, Quasilinear stochastic PDE,
PDE in random environment}

\vskip 1.5mm
\noindent {\it 2010 Mathematics Subject Classification.} 60H15, 35R60, 35S50.

\section{Introduction}

In this paper, we study the asymptotic behavior of a solution of
a certain quasilinear partial differential equation (PDE) with mild noise, which arises
in the hydrodynamic scaling limit of a microscopic interacting particle system
called zero-range process in a random environment.  We apply the method of
 the paracontrolled calculus to show that the solution converges to that of
 a stochastic partial differential equation (SPDE) with spatial white noise.
\subsection{PDE in Sinai's random environment}

Landim, Pacheco, Sethuraman and Xue \cite{LSX}
recently studied the hydrodynamic space-time scaling limit 
for the zero-range process on one dimensional discrete lattice $\Z$ 
in a random environment of Sinai's type
and derived the following PDE with mild noise for the macroscopic density
$v=v(t,x)$ of particles on $\R$:
\begin{equation}  \label{eq:1}
\partial_t v = \De \{\fa(v)\} - \nabla\{\fa(v) \dot{w}^\e(x)\}, \quad x \in \R.
\end{equation}
Here, $\De = \partial_x^2, \nabla=\partial_x$ and
$\fa(v)  \, \big(\!=E^{\nu_v}[g(\eta_0)]\big)$  is a strictly increasing
$C^2$-function of $v\ge 0$ such that $\fa(0)=0$ and $\fa'(v)\ge c$
for some $c>0$, where $\nu_v$ is a certain product measure on 
$\{0,1,2,\ldots\}^\Z$ with mean $v$ associated with the jump rate $g(k)$
of the zero-range process and $\eta_0$ denotes the particle number at 
$0$; see $\Phi(\a)$ in (3.8) of \cite{KL}, p.30.  The simplest example is 
$\fa(v)=v$ taking $g(k)=k$.  The noise is given by
\begin{equation}  \label{eq:we}
\dot{w}^\e(x) = \frac1{(a+b)\e}\big(w(x+a\e)-w(x-b\e)\big),
\end{equation}
where $\{w(x)\}_{x\in \R}$ is a two-sided Brownian motion and 
$a, b>0$; actually $a=b=1$ in \cite{LSX}.  Therefore,
$\dot{w}^\e(x) \to \dot{w}(x)$ as $\e\downarrow 0$ and the limit $\dot{w}(x)$
is the spatial white noise.  Instead of this specific form of
$\dot{w}^\e(x)$, we may take general smeared noise $\psi^\e*\dot{w}(x)$ of
$\dot{w}(x)$ defined in Subsection \ref{sec:1.3-A} or Section \ref{sec:Renorm}.

We consider the PDE \eqref{eq:1} with mild noise in more general form, assuming $\fa(v)$ 
is defined for all $v\in \R$ such that $\fa\in C^2(\R)$ and $\fa'(v)\ge c>0$,
by replacing the second $\fa(v)$ with $-\chi(v)$ such that $\chi\in C^1(\R)$:
\begin{equation}  \label{eq:2-0}
\partial_t v = \De \{\fa(v)\} + \nabla\{\chi(v) \dot{w}^\e(x)\}, 
\quad x \in \R.
\end{equation}

Our goal is to show the convergence of the solution $v=v^\e$ of
\eqref{eq:2-0} to that of the following SPDE
\begin{equation}  \label{eq:2}
\partial_t v = \De \{\fa(v)\} + \nabla\{\chi(v) \xi\}, 
\quad x \in \R,
\end{equation}
with $\xi(x)= \dot{w}(x)$.  
Roughly speaking, $\dot{w} \in C^{-\frac{d}2-} \big(:= \cap_{\de>0} C^{-\frac{d}2-\de}\big)$
if the spatial dimension is $d$ instead of $1$ so that we expect $\nabla\{\chi(v)\dot{w}\}
\in C^{-\frac{d}2-1-}$ (if $v$ is reasonable) and by Schauder estimate,
we would have $v \in C^{-\frac{d}2-1+2-} = C^{1-\frac{d}2-}$.
Thus, one can guess that \eqref{eq:2} has meaning only when $d=1$.

We will show comparison theorems for \eqref{eq:2-0} and \eqref{eq:2},
by which one can deal with $\fa$ and $\chi$ defined only for $v\ge 0$
if $\chi$ satisfies $\chi(0)=0$, see 
Corollary \ref{cor:1.3} and Lemma \ref{lem:comparison}.

If we drop $\nabla$ in the second term of \eqref{eq:2},
it is an equation known as the generalized parabolic Anderson model
and studied in the framework of singular SPDEs,
\cite{GIP-15}, \cite{Hai-14}, \cite{HaLa-15}, \cite{HaLa-18}, \cite{BDH-19}, \cite{FuMa-19}.
Note that the equation \eqref{eq:2}, with both $\fa, \chi$ being linear
and $\nabla$ dropped, is originally called the parabolic Anderson model.
The generalized parabolic Anderson model has meaning for the spatial dimension $d\le 3$.

\subsection{Integrated quasilinear stochastic PDE}

We actually study, instead of \eqref{eq:2},  the equation
\begin{equation}  \label{eq:Q-ab}
\partial_t u = a(\nabla u)\De u + g(\nabla u)\cdot\xi,
\end{equation}
where $a(v)=\fa'(v)$, $g(v)=\chi(v), v \in \R$, and $\xi$ is the spatial white noise.
If we set $v:=\nabla u$, then we can recover the equation
\eqref{eq:2}:
\begin{align*}
\partial_t v &= \nabla \big( a(v)\nabla v\big) 
+ \nabla \big( g(v)\cdot\xi \big) \\
&= \De\{\fa(v)\} + \nabla\{\chi(v) \cdot\xi\}.
\end{align*}
In other words, \eqref{eq:Q-ab} is an integrated form of
\eqref{eq:2}.  The relation between the equations \eqref{eq:2}
and \eqref{eq:Q-ab} is similar to that of stochastic Burgers equation
and KPZ equation.

We work with the equation \eqref{eq:Q-ab} on $\T = [0,1]$
with periodic boundary condition following the method of
Bailleul, Debussche and Hofmanov\'{a} \cite{BDH-19}, in which they
studied the case $a=a(u), g=g(u)$ on $\T^2$ instead of $a=a(\nabla u), g=g(\nabla u)$
on $\T$ in our case.  Roughly, the noise term behaves as $g(\nabla u)\cdot\xi \in C^{\frac12-}
\times C^{-\frac12-}$ in our case, while $g(u)\cdot\xi \in C^{1-}\times C^{-1-}$
in \cite{BDH-19}.

The equation \eqref{eq:2} considered on $\T$ has a mass conservation law:
$$
\int_\T v(t,x)dx = m
$$
for all $t\ge 0$ with a constant $m\in\R$.  This is caused by the conservation of
particle number of the underlying microscopic system.  Since $v=\nabla u\ge 0$
for the original model, $u$ should be increasing in $x$ and
$u(t,1)=u(t,0)+m$ holds with some $m>0$.  Therefore, it is more natural to consider
\eqref{eq:Q-ab} under the modified periodic condition:
\begin{equation} \label{eq:mp}
u(t,x+n) = u(t,x) + nm, \quad n\in \Z,\; x \in \R.
\end{equation}
Indeed, to consider \eqref{eq:Q-ab} under the condition \eqref{eq:mp}, set
$\bar u(t,x) := u(t,x) - mx$, then $\bar u(t,x)$ satisfies
the usual periodic boundary condition $\bar u(t,x+1) = \bar u(t,x)$
and the SPDE
$$
\partial_t \bar u = a(\nabla\bar u + m) \De \bar u
+ g(\nabla\bar u+m)\cdot \xi.
$$
Therefore, instead of \eqref{eq:mp}, we may consider \eqref{eq:Q-ab} under
the usual periodic boundary condition with $a(v), g(v)$ replaced by
$a(v+ m), g(v+ m)$, respectively.

The equation \eqref{eq:Q-ab} has a property that, if $u$ is a solution,
then $u+c$ is also a solution for every $c\in \R$.  In other words,
\eqref{eq:Q-ab} is an equation for the shape of $u$ and its graph is
invariant under the vertical shift.  Or, \eqref{eq:Q-ab} is essentially an
equation for the slope $\nabla u$ of $u$.  This is close to the
$\nabla\phi$-interface model \cite{FS}, though its driving force
is the space-time noise.  This gives a clear difference
from the equation considered in \cite{BDH-19}.  One can expect
that the structure of invariant measures for the system on $\R$ 
would be quite different.

The equation \eqref{eq:Q-ab} with $a(v)=\frac12$ and $g(v)=v$ can be 
interpreted as the Kolmogorov equation associated with the stochastic
differential equation (SDE)
\begin{equation}  \label{eq:SDE}
dX_t = b(X_t)dt + dW_t,
\end{equation}
where $b(x)=\xi(x)$ and $W_t$ is a one dimensional Brownian motion,
see \cite{FIR}.  The SDE \eqref{eq:SDE} is studied by \cite{CC} for $b\in C^\b$
with $\b\in (-\frac23,-\frac12]$ and this covers the one dimensional 
spatial white noise as in our paper.  The process $X_t$ determined by
the SDE \eqref{eq:SDE} with $b=\xi$, the spatial white noise independent
of $W$, is called the Brox diffusion, cf.\ \cite{LSX}.

\subsection{Main results and structure of the paper}  \label{sec:1.3-A}

Let $C^\a, \a \in \R$ and $\mathcal{L}_T^\a, \a \in (0,2)$ be the spatial and parabolic H\"{o}lder spaces, respectively, explained
in Subsection \ref{sec:2.1}.  Let $\xi$ be the spatial white noise on $\T$.
We call $\xi^\e,\e>0$, the smeared noise of $\xi$ if it is defined by
$\xi^\e=\psi^\e*\xi$, where $\psi^\e(x) = \frac1\e \psi\big(\frac{x}\e\big)$
and the mollifier $\psi$ is a measurable and integrable function on
$\R$ with compact support satisfying $\int_\R\psi(x)dx=1$, see Lemma \ref{lem:2.9}.
Note that the noise $\dot{w}^\e$ in \eqref{eq:we} considered on $\T$ 
is a smeared noise of the spatial white noise $\xi=\dot{w}$ in this sense, see Remark
\ref{rem:5.1}.

Our main result is formulated as follows.  In our case, we do not need any renormalization.

\begin{thm}  \label{thm:main}
Assume that $a\in C_b^3(\R)$ satisfies $c\le a(v)\le C$
for some $c, C>0$ and $g\in C_b^3(\R)$.  Let an initial value
$u_0\in C^\a$ with $\a\in (\frac43, \frac32)$ be given.  Then, there
exists a random time $T>0$ defined on the same probability space
as $\xi$ such that the solutions $u^\e$ of the SPDE
\begin{equation*}  
\partial_t u^\e = a(\nabla u^\e)\De u^\e + g(\nabla u^\e)\cdot\xi^\e,
\quad u^\e(0)=u_0,
\end{equation*}
with the smeared noise $\xi^\e$ of $\xi$ converge in probability
in $\mathcal{L}_T^\a$ as $\e\downarrow 0$ to $u\in \mathcal{L}_T^\a$,
which is a unique solution up to the time $T$ of the SPDE
\eqref{eq:Q-ab} on $\T$ defined in paracontrolled sense, that is,
satisfying the condition \eqref{eq:5.5} below, with $u(0)=u_0$.
In particular, the limit $u$ is independent of the choice of the mollifier $\psi$.
\end{thm}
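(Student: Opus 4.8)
The plan is to set up the equation \eqref{eq:Q-ab} as a paracontrolled problem and then pass to the limit along the smeared noise, using the convergence of the associated ``enhanced noise'' data. Since the structure here is parabolic-singular but subcritical (the noise term lives at regularity $-\frac12-$ rather than $-1-$ as in \cite{BDH-19}), the main simplification over the classical paracontrolled framework is that only a finite, mild set of stochastic objects needs to be controlled and, as the theorem asserts, no renormalization is required; this is precisely because the resonant product that would normally diverge is already of positive regularity in the one-dimensional slope picture.

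First I would construct the reference objects. Let $X^\e$ (resp.\ $X$) solve the linear equation $\partial_t X^\e = \De X^\e + \xi^\e$ (resp.\ with $\xi$), so that $X^\e \to X$ in $\mathcal{L}_T^{\frac32-}$ — this follows from Schauder/heat-kernel bounds together with Lemma \ref{lem:2.9} on the convergence $\xi^\e\to\xi$ in $C^{-\frac12-}$. The relevant enhanced data are then $\nabla X^\e$ together with the resonant term $\nabla X^\e \varodot \xi^\e$ (in the notation of Subsection \ref{sec:2.1}); since $\nabla X^\e \in C^{\frac12-}$ and $\xi^\e\in C^{-\frac12-}$, the sum of regularities is $0-<0$, so a priori this product is ill-posed — but here one gains regularity because $\nabla X^\e$ solves a heat equation with right-hand side $\nabla\xi^\e$, and an explicit computation (expanding in Wiener chaos, as the objects are linear in the Gaussian noise) shows the expectation is bounded uniformly in $\e$ and the second-chaos part converges, with no diverging constant. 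This is the step where I expect the real work, and where the absence of renormalization must be verified by direct estimation of the kernel, exactly as announced after \eqref{eq:2} in the introduction: Schauder gives $X\in C^{1-\frac{d}{2}-}=C^{\frac12-}$ in $d=1$, which is just enough.

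Next I would make the paracontrolled ansatz $u = g(\nabla u)\varolessthan I(\xi) + u^\sharp$ (where $I$ denotes the relevant heat-operator applied to the noise, and $\varolessthan$ the paraproduct), plug it into \eqref{eq:Q-ab}, and derive the fixed-point equation for the remainder $u^\sharp$ in a space of regularity $\a+\theta$ for suitable $\theta>0$, using the commutator/para-linearisation lemmas and the continuity of the resonant product on the enhanced data. Because $a, g \in C_b^3$, the Nemytskii maps $v\mapsto a(v)$, $v\mapsto g(\nabla v)$ are locally Lipschitz on the relevant H\"older spaces, so for $u_0\in C^\a$, $\a\in(\frac43,\frac32)$, a standard contraction argument on a small random time interval $[0,T]$ gives local existence and uniqueness of the paracontrolled solution; the condition $\a>\frac43$ is what makes $\nabla u$ live above the threshold $\frac13$ needed to close the products. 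The same fixed point, run with the $\e$-data, produces $u^\e$, and the solution map is continuous in the enhanced-noise argument uniformly on bounded sets and for $T$ small.

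Finally, convergence in probability in $\mathcal{L}_T^\a$ follows by combining: (i) the convergence of the enhanced data $(\nabla X^\e,\ \nabla X^\e\varodot\xi^\e)\to(\nabla X,\ \nabla X\varodot\xi)$ in probability in the appropriate product of H\"older spaces (from the chaos estimates above, which give $L^p$-convergence and hence convergence in probability, with a stopping-time argument to handle the random $T$); (ii) the local Lipschitz continuity of the solution map with respect to that data; and (iii) a standard localisation to reconcile the possibly $\e$-dependent existence times into a single common random time $T>0$. Independence of the limit $u$ from the mollifier $\psi$ is then immediate, since the limiting enhanced data $(\nabla X,\ \nabla X\varodot\xi)$ depend only on $\xi$ and not on $\psi$ — there is no renormalization constant whose value could encode a choice of mollifier. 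The main obstacle throughout is step (i)/(ii): establishing the uniform-in-$\e$ bound and convergence of the resonant product $\nabla X^\e\varodot\xi^\e$ and verifying that the would-be divergence cancels without renormalization.
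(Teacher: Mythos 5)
Your overall architecture does match the paper's (convergence of the enhanced data plus a paracontrolled fixed point whose solution map is continuous in that data, with mollifier-independence read off from the limit data), but two steps at the heart of the theorem are either wrong as stated or left unproved. First, the quasilinear term is not addressed. The ansatz $u=\Pi_{g(\nabla u)}I(\xi)+u^\sharp$ with a fixed heat operator carries the wrong Gubinelli derivative: plugging it into \eqref{eq:Q-ab}, the paraproduct component of $a(\nabla u)\De u$ (of regularity $\a-2$) is not cancelled by $\Pi_{g(\nabla u)}\xi$ unless $a\equiv 1$, so Schauder gives $u^\sharp$ no better regularity than $\a$; but the remainder must gain regularity $\ga>\a$ (with a time weight) for $\Pi(\nabla u^\sharp,\xi)$ to make sense, since one needs $\ga+\a-3>0$ while $2\a-3<0$. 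Hence the fixed point does not close with your ansatz. The paper's resolution — the frozen-coefficient operator $\mathcal{L}^0=\partial_t-a(\nabla u_0^T)\De$, the derivative $u'=g(\nabla u)/a(\nabla u)$ encoded in the map \eqref{eq-2.7-0323}--\eqref{eq-2.8-0323} (condition \eqref{eq:5.5} with $\bar\Pi_{u'}X$, $X=(-\De)^{-1}Q\xi$), the intertwining estimate of Lemma \ref{lem-2.4-0319}, and the time-weighted norms controlling $(a(\nabla u)-a(\nabla u_0^T))\De u^\sharp$ — is exactly the machinery your plan omits; in effect you treat the equation as semilinear. (Your use of a parabolic rather than the paper's elliptic, time-independent reference object is immaterial.)

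Second, the ``no renormalization'' step is asserted for the wrong reason and not verified. The resonant term is not of positive regularity: $\Pi(\nabla X,\xi)\in C^{2\a-3}$ with $2\a-3<0$, and since the regularities of $\nabla X\in C^{\a-1}$ and $\xi\in C^{\a-2}$ sum to a negative number it genuinely requires a probabilistic (second Wiener chaos) construction; nor is there a regularity gain coming from the source $\nabla\xi^\e$. What actually makes the renormalization constant vanish is the exact cancellation $c_\e=E[\nabla X^\e(x)\xi^\e(x)]=\sum_{k\neq 0}|\hat\psi(\e k)|^2\,2\pi i k/(4\pi^2k^2)=0$, i.e.\ the oddness in $k$ of the Fourier multiplier produced by the derivative $\nabla$, together with uniform-in-$\e$ chaos bounds giving $L^p$ convergence of $\Pi(\nabla X^\e,\xi^\e)$ to $\nabla X\diamond\xi$ (Lemmas \ref{lem:2.8} and \ref{lem:2.9}). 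You explicitly defer this computation as ``the real work,'' so the central content of the theorem — convergence without renormalization and hence independence of $\psi$ — remains unproved in your proposal; identifying the antisymmetry mechanism and carrying out the kernel/chaos estimate (including the time integral near $t=0$) is required to close it.
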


When we consider \eqref{eq:Q-ab} under the modified periodic condition
\eqref{eq:mp} with $m>0$ assuming $\nabla u(0,x)\ge 0$, we can expect that
$\nabla u(t,x)\ge 0$ holds for the solution of \eqref{eq:Q-ab} for $t>0$.
We will show this for $v=\nabla u$ by considering \eqref{eq:2} with smooth
noise $\xi^\e$, see Lemma \ref{lem:comparison}.  Then, by 
Theorem \ref{thm:main}, we can find a subsequence of $\e\downarrow 0$
along which $u^\e$ converge to $u$ almost surely and, taking the limit,
we obtain the following corollary.

\begin{cor}\label{cor:1.3}
Assume $\fa\in C^4([0,\infty))$, $\chi\in C^3([0,\infty))$ and
they satisfy the conditions $\chi(0)=0$,
$c\le \fa'(v)\le C$ and $|\chi'(v)| \le C\fa'(v)$ for some
$c, C>0$.  Then, for the solution $u(t)$ of 
the paracontrolled SPDE \eqref{eq:Q-ab}, if $\nabla u(0,x)\ge 0$
holds for all $x\in \T$, we have $\nabla u(t,x)\ge 0$ for all $0\le t \le T$
and $x\in \T$.
\end{cor}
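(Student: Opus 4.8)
The plan is to deduce Corollary \ref{cor:1.3} from Theorem \ref{thm:main} together with a comparison principle for the approximating equations with smooth noise. First I would set up the approximation: given $\fa\in C^4([0,\infty))$ and $\chi\in C^3([0,\infty))$ satisfying $\chi(0)=0$, $c\le\fa'\le C$ and $|\chi'|\le C\fa'$, extend $\fa'$ and $\chi$ to all of $\R$ so that $a:=\fa'\in C_b^3(\R)$ still satisfies $c'\le a\le C'$ and $g:=\chi\in C_b^3(\R)$, without changing the equation on the region where $v=\nabla u\ge 0$ (this is where the sign hypothesis $\chi(0)=0$ and the control $|\chi'|\le C\fa'$ will be used, to make the extension consistent with the comparison argument). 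Then Theorem \ref{thm:main} applies and, along a subsequence $\e\downarrow 0$, the solutions $u^\e$ of $\partial_t u^\e=a(\nabla u^\e)\De u^\e+g(\nabla u^\e)\cdot\xi^\e$ converge almost surely in $\mathcal{L}_T^\a$ to the paracontrolled solution $u$.

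Next I would prove the sign-preservation at the smooth level. For fixed $\e>0$, the noise $\xi^\e$ is a smooth (random) function, so $v^\e:=\nabla u^\e$ solves the classical quasilinear parabolic PDE \eqref{eq:2-0}, namely $\partial_t v^\e=\De\{\fa(v^\e)\}+\nabla\{\chi(v^\e)\xi^\e\}$, on $\T$ with smooth coefficients. I would invoke the comparison theorem announced as Lemma \ref{lem:comparison}: since $\chi(0)=0$, the constant function $v\equiv 0$ is a (sub)solution of this equation, and if $v^\e(0,x)=\nabla u_0(x)\ge 0\ge 0$ then the comparison principle gives $v^\e(t,x)\ge 0$ for all $t\in[0,T]$ and $x\in\T$. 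Equivalently, $\nabla u^\e(t,x)\ge 0$ for every $\e>0$ in the approximating family. (One should note here that $\nabla u_0\ge 0$ requires $u_0$ to be in the class where this makes sense; since $u_0\in C^\a$ with $\a\in(\tfrac43,\tfrac32)$ we have $\nabla u_0\in C^{\a-1}$ with $\a-1>\tfrac13>0$, so the pointwise inequality is meaningful.)

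Finally I would pass to the limit. Along the subsequence on which $u^\e\to u$ a.s.\ in $\mathcal{L}_T^\a$, convergence in $\mathcal{L}_T^\a$ with $\a>1$ controls the spatial gradient: $\nabla u^\e\to\nabla u$ uniformly on $[0,T]\times\T$ (or at least locally uniformly in a way that preserves pointwise limits), because the $C^\a$-norm dominates the $C^1$-norm when $\a>1$. Since each $\nabla u^\e\ge 0$ and the limit is (locally) uniform, we get $\nabla u(t,x)\ge 0$ for all $0\le t\le T$ and $x\in\T$, which is exactly the claim.

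The main obstacle I expect is not the limiting step, which is soft, but the comparison step at the smooth level used as a black box: one must check that $v\equiv 0$ really is a subsolution of \eqref{eq:2-0} under the stated hypotheses (this is immediate from $\chi(0)=0$, which kills the noise term, and from $\fa(0)$ being constant so $\De\{\fa(0)\}=0$), and that the coefficient extensions of $\fa'$ and $\chi$ off $[0,\infty)$ can be chosen so that the hypotheses of both Theorem \ref{thm:main} (bounded $C_b^3$ coefficients, ellipticity) and Lemma \ref{lem:comparison} hold simultaneously. The condition $|\chi'(v)|\le C\fa'(v)$ is precisely what makes the noise coefficient subordinate to the diffusion, which is typically what a comparison argument for \eqref{eq:2-0} needs; verifying that this structural inequality survives the extension (and that it is what Lemma \ref{lem:comparison} actually requires) is the delicate bookkeeping point.
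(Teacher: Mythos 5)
Your proposal is correct and follows essentially the same route as the paper: solve the smooth-noise approximating equation, apply Lemma \ref{lem:comparison}-(2) (comparison with the zero solution, using $\chi(0)=0$ and $|\chi'|\le C\fa'$) to get $\nabla u^\e\ge 0$, then pass to an almost surely convergent subsequence from Theorem \ref{thm:main}, where $\a>1$ ensures $\nabla u^\e\to\nabla u$ uniformly so the sign is preserved. The extension of $\fa',\chi$ to $C_b^3(\R)$ coefficients that you flag is indeed the only bookkeeping the paper leaves implicit, and your treatment of it is adequate.
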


In Section \ref{sec-2.1-0307}, we rewrite the SPDE \eqref{eq:Q-ab}
into a certain fixed point problem in a setting of the
paracontrolled calculus and we solve it in Section \ref{sec-2.2-0307},
see Theorem \ref{thm-2.1-0304}.  Theorem \ref{thm:main} follows from
Theorem \ref{thm-2.1-0304} and Lemma \ref{lem:2.9}.
Section \ref{sec-4-0406} is devoted to the proof of Lipschitz estimate
on $\zeta$, which appears as a remainder term in the fixed point problem.
In Section \ref{sec:Renorm}, we study the convergence in $C^{2\a-3}$
of the resonant term $\Pi(\nabla X,\xi)$, which is a quadratic function 
of the spatial white noise $\xi$, where $X= (-\De)^{-1}Q\xi$, 
$Q\xi= \xi-\xi(\T)$ and $\a$ is as in Theorem \ref{thm:main}.
The reason why our equation does not require
renormalization lies in the fact that this term contains the derivative
$\nabla X$.  This is different from  \cite{BDH-19} and more close to
the stochastic Burgers equation.
In Section \ref{sec:comp}, we show a comparison theorem for
the SPDE \eqref{eq:2} with smooth noise and this leads to Corollary 
\ref{cor:1.3}.

\section{Reduction to fixed point problem} \label{sec-2.1-0307}

The main purpose of this section is to reduce the  SPDE \eqref{eq:Q-ab} 
driven by one dimensional spatial white noise $\xi$ on $\T$ to a fixed 
point problem for the map $\Phi$ defined by \eqref{eq-2.4-0304} by
means of  the  paracontrolled calculus. 

\subsection{Function spaces, regularity exponents, paraproduct and variable $X$}
\label{sec:2.1}

Let us first introduce several function spaces.  As in \cite{GIP-15}, \cite{BDH-19}, 
for regularity exponent $\a\in \R$, we denote by $C^\a\equiv C^\a(\T) 
:= B_{\infty,\infty}^\a(\T,\R)$ the spatial H\"{o}lder-Besov space 
equipped with the norm $\|\cdot\|_{C^\a}$.  For $T>0$, we  write $C_TC^\a =C([0,T],C^\a)$
for the space of $C^\a$-valued continuous functions on $[0,T]$ endowed with the supremum norm 
$\|\cdot\|_{C_TC^\a}$ and write $C_T^{\a}L^\infty = C^\a([0,T],L^\infty)$ with
$\a \in (0,1)$ 
for the space of $\a$-H\"{o}lder continuous functions from $[0,T]$ to 
$L^\infty$ with the seminorm $\|f\|_{C_T^{\a}L^\infty} =\sup_{0\leq s\neq t\leq T}\|f(t)- f(s)\|_{L^\infty}/|t-s|^\alpha$ for $f\in C_T^{\a}L^\infty$,
 where $L^\infty= L^\infty(\T)$. For $\a \in (0,2)$, we also define 
$\mathcal{L}_T^\a := C_TC^\a \cap C_T^{\a/2}L^\infty$, equipped with
the norm
\begin{align*}
\|\cdot\|_{\mathcal{L}_T^\a} =\|\cdot\|_{C_TC^\a} + \|\cdot\|_{C_T^{\frac{\a}{2}}L^\infty}.
\end{align*}

In the following, we will use three regularity exponents $\a, \b$ and $\ga$.
In particular, $\a$ is the exponent for the solution $u$ of \eqref{eq:Q-ab} and
$\b$ is that for a certain function of its derivative $\nabla u$ (see $u'$ below) so that $\b<\a-1$.
We expect $\a= \frac32-\de$ for every $\de>0$, since $\xi\in C^{-\frac12-\de}$ 
for one dimensional spatial white noise would imply $u\in C^{\frac32-\de}$ by Schauder estimate.

More precisely, we assume the following conditions for these three exponents.
For $\a$, we assume that 
$\alpha \in (\frac43, \frac{3}{2})$ and consider  $\xi \in C^{\alpha-2}$.  
For $\b$, noting that  $\frac13<\alpha-1$ for $\a$ in this interval,  
we  take  $\beta \in (\frac13,\alpha-1)$. Then, we have that $3-2\a <\frac13$ and  $2\a+ \b -3>0.$
The third regularity exponent $\gamma$ is taken as $\gamma \in (2\b +1, \a +\b)$,
which will be mainly used in Section \ref{sec-2.2-0307}.  In particular, we have 
$\ga + \b -2>0.$ Throughout this paper, unless otherwise noted, we will assume
$\a, \b$ and $\ga$ satisfy the aforementioned conditions. 

As in \cite{BDH-19}, we denote by $\Pi_fg ( = f\prec g \  \text{in}\ 
 \cite{GIP-15})$ the paraproduct,  by
$\Pi(f,g) ( = f\circ g)$ the resonant term  and by $\bar\Pi_fg (= f\prec\!\!\!\prec g)$ the modified paraproduct, respectively. Then, for two distributions $f$ and 
$g$, the general product $fg$ can be (at least formally) written as
$fg=\Pi_f g + \Pi(f,g) + \Pi_gf$, which is called the Littlewood-Paley
decomposition.  See Section 2.1 and (36) of \cite{GIP-15} or p.\ 43 and p.\ 45 of 
\cite{BDH-19} for precise definitions of these notion.

Define $X=(-\De)^{-1}Q\xi$, more precisely, a zero spatial mean solution of 
\begin{equation}  \label{eq:Xxi}
-\De X = Q\xi,
\end{equation}
where $Q\xi:= \xi-\xi(\T)$ and $\xi(\T) \equiv \hat{\xi}(0) = \int_\T \xi(x)dx$,
see also \eqref{eq:4.X} below. Consider $(u,u') \in \mathcal{L}_T^\a\times \mathcal{L}_T^\b$.
We call $(u,u')$ is paracontrolled by $X$ if
\begin{equation}  \label{eq:5.5}
u= \bar\Pi_{u'} X+ u^\sharp, \quad u^\sharp \in \mathcal{L}_T^\a
\end{equation}
holds with 
\begin{equation}  \label{eq:2-abc}
\|(u,u')\|_{\a,\b,\ga} := \|u'\|_{\mathcal{L}_T^\b} + \|u^\sharp\|_{\mathcal{L}_T^\a}
+ \sup_{0<t\le T} t^{\frac{\ga-\a}2} \|u^\sharp(t)\|_{C^{\ga}} < \infty.
\end{equation}
We denote by ${{\bf C}_{\a, \b, \gamma}}(X)$ the space of all functions
$(u, u')$  
paracontrolled by $X$. Actually, we introduce $(u, u')$ expecting 
$u$ to be the solution of  \eqref{eq:Q-ab} and $u'=\frac{g(\nabla u)}
{a(\nabla u)}$.

\subsection{Basic estimates in paracontrolled calculus}

Before starting to transform the equation \eqref{eq:Q-ab} into a fixed point problem,
let us prepare some fundamental estimates in the paracontrolled 
calculus, which will be frequently used in the following.

Let us first summarize some known results for 
the H\"{o}lder-Besov space $C^\alpha$, which are mentioned in
Lemma 2.1 and  p. 62 of \cite{GIP-15}.  Throughout this paper, 
we write $a\lesssim b$ for two non-negative functions  $a$ and $b$
 (mostly norms of distributions or their products) if there exists a positive constant $C$ 
 independent of the variables under consideration such that $a \leq C b$.

\begin{lem}\label{lem-5.1-0301}
{\rm (i)} For  $\a \leq \b$,  we have $\|\cdot\|_{C^{\a}} \leq 
\|\cdot\|_{C^{\b}}$. Moreover, $\|\cdot \|_{L^\infty} \lesssim \|\cdot\|_{C^{\a}}$ for  
$\alpha >0$ and conversely $\|\cdot\|_{C^{\a}} \lesssim \|\cdot \|_{L^\infty}$ for  $\alpha \leq 0$.
\\
{\rm (ii)} (Bony's estimate)
The following hold.\\
\vspace{-8mm}
\begin{itemize}
 \setlength\itemsep{0em}
\item For {$\alpha > 0$} and  $\b\in\R$, $\|\Pi_u v\|_{C^\beta}\lesssim \|u\|_{L^\infty} \|v\|_{C^\b}$.  
\item For  $\alpha\not= 0$ and $\b\in\R$, $\|\Pi_u v\|_{C^{(\a\wedge 0)+ \b}} \lesssim \|u \|_{C^\a} \|v\|_{C^\b}$.
\item For  $\alpha +\b> 0$, $\|\Pi(u, v)\|_{C^{\a+ \b}} 
\lesssim \|u\|_{C^\a} \|v\|_{C^\b}$.
\end{itemize} 
\vspace{-3mm}
In particular, the product $uv$ is well-defined if and only if  $\a +\b >0$, in this case  
$uv \in  C^{\a\wedge\b}$ and 
\begin{align*}
\|uv\|_{C^{\a\wedge\b}} \lesssim \|u\|_{C^\a} \|v\|_{C^\b}
\end{align*}
holds for $\a \b\neq 0$. 
\end{lem}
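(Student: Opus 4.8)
The statement collects classical facts about H\"older--Besov spaces, essentially Lemma 2.1 of \cite{GIP-15}, and the plan is to derive all of them directly from the Littlewood--Paley characterization together with Bernstein's inequality. Recall that $\|f\|_{C^\a}\simeq \sup_{j\ge -1}2^{j\a}\|\De_j f\|_{L^\infty}$, where $\De_j$ denotes the $j$-th dyadic block and $S_j=\sum_{k\le j-1}\De_k$ the low-frequency projection; each $\De_j f$ has Fourier support in a dyadic annulus of size $2^j$ and $S_j f$ in a ball of radius $\sim 2^j$, and $\|\De_j f\|_{L^\infty}\lesssim\|f\|_{L^\infty}$, $\|S_j f\|_{L^\infty}\lesssim\|f\|_{L^\infty}$ uniformly in $j$ since these operators are convolutions against kernels bounded in $L^1$ uniformly in $j$. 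With these tools the only real work is the bookkeeping of spectral supports.

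For (i): the embedding $\|\cdot\|_{C^\a}\le\|\cdot\|_{C^\b}$ for $\a\le\b$ is immediate from the characterization, since $2^{j\a}\le 2^{j\b}$ for $j\ge 0$ and the $j=-1$ term is absorbed by the normalization. For $\a>0$, writing $f=\sum_{j\ge-1}\De_j f$ gives $\|f\|_{L^\infty}\le\sum_{j\ge-1}2^{-j\a}\bigl(2^{j\a}\|\De_j f\|_{L^\infty}\bigr)\le\|f\|_{C^\a}\sum_{j\ge-1}2^{-j\a}\lesssim\|f\|_{C^\a}$, the geometric series converging precisely because $\a>0$. Conversely, for $\a\le 0$, $2^{j\a}\|\De_j f\|_{L^\infty}\lesssim 2^{j\a}\|f\|_{L^\infty}\le\|f\|_{L^\infty}$ for $j\ge 0$ (and likewise for $j=-1$), so taking the supremum gives $\|f\|_{C^\a}\lesssim\|f\|_{L^\infty}$.

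For (ii) we use the definitions $\Pi_u v=\sum_{j}S_{j-1}u\,\De_j v$ and $\Pi(u,v)=\sum_{|i-j|\le1}\De_i u\,\De_j v$. In $\Pi_u v$ each summand $S_{j-1}u\,\De_j v$ is spectrally supported in an annulus of size $2^j$, so $\|\De_q\Pi_u v\|_{L^\infty}\lesssim\sum_{|j-q|\le N}\|S_{j-1}u\|_{L^\infty}\|\De_j v\|_{L^\infty}$ for a fixed $N$; bounding $\|S_{j-1}u\|_{L^\infty}\lesssim\|u\|_{L^\infty}$, multiplying by $2^{q\b}$ and taking the supremum over $q$ yields the first bullet. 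When $\a<0$ one instead estimates $\|S_{j-1}u\|_{L^\infty}\le\sum_{k<j-1}\|\De_k u\|_{L^\infty}\lesssim 2^{-j\a}\|u\|_{C^\a}$ (the series being dominated by its last term because $-\a>0$), which contributes the extra factor $2^{j\a}$ and hence the gain $(\a\wedge0)+\b$ of the second bullet; the case $\a>0$ of that bullet reduces to the first via $\|u\|_{L^\infty}\lesssim\|u\|_{C^\a}$. For the resonant term, each summand $\De_i u\,\De_j v$ with $|i-j|\le1$ is supported in a ball of radius $\sim 2^j$, so after regrouping $\|\De_q\Pi(u,v)\|_{L^\infty}\lesssim\sum_{j\ge q-N}\|\De_j u\|_{L^\infty}\|\De_j v\|_{L^\infty}\lesssim\|u\|_{C^\a}\|v\|_{C^\b}\sum_{j\ge q-N}2^{-j(\a+\b)}$, and the hypothesis $\a+\b>0$ makes this sum converge and be $\lesssim 2^{-q(\a+\b)}\|u\|_{C^\a}\|v\|_{C^\b}$; multiplying by $2^{q(\a+\b)}$ and taking the supremum gives the third bullet.

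Finally, combining these with the Littlewood--Paley decomposition $uv=\Pi_u v+\Pi(u,v)+\Pi_v u$: when $\a+\b>0$ all three pieces are well defined and lie in $C^{(\a\wedge0)+\b}$, $C^{\a+\b}$ and $C^{(\b\wedge0)+\a}$ respectively; since $\a+\b>0$ forces at least one of $\a,\b$ to be nonnegative, each of these three exponents is $\ge\a\wedge\b$, so $uv\in C^{\a\wedge\b}$ with $\|uv\|_{C^{\a\wedge\b}}\lesssim\|u\|_{C^\a}\|v\|_{C^\b}$ (assuming $\a\b\ne0$, which rules out borderline logarithmic losses in the embeddings). I expect the one point genuinely deserving attention to be the sharpness claim --- that the product is \emph{not} definable in general when $\a+\b\le0$ --- which requires exhibiting $u\in C^\a$, $v\in C^\b$ for which $\sum_{|i-j|\le1}\De_i u\,\De_j v$ diverges; this is classical and we refer to \cite{GIP-15}. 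Everything else is the routine spectral-support bookkeeping described above.
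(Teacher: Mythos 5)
Your proof is correct, and it is worth noting that the paper itself does not prove this lemma at all: it simply cites Lemma~2.1 and p.~62 of \cite{GIP-15} as a summary of known facts. What you have written is essentially the standard Littlewood--Paley/Bony argument that underlies the cited result, so in substance you are reconstructing the proof of the reference rather than diverging from the paper. The spectral-support bookkeeping is right in all three bullets: the low-frequency factor $S_{j-1}u$ bounded by $\|u\|_{L^\infty}$ (or by $2^{-j\a}\|u\|_{C^\a}$ when $\a<0$, which is exactly where the restriction $\a\neq 0$ enters), the annulus localization giving $|j-q|\le N$ for the paraproduct, the ball localization giving the one-sided sum $j\gtrsim q$ for the resonant term with convergence precisely under $\a+\b>0$, and the final comparison of the three exponents with $\a\wedge\b$. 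Two small caveats, neither of which affects the substance: the claim $\|\cdot\|_{C^\a}\le\|\cdot\|_{C^\b}$ with constant exactly $1$ is sensitive to how the $j=-1$ block is weighted (with the usual convention one only gets $\lesssim$ with a constant $2^{\b-\a}$ from that block, which is what is actually used later in the paper anyway); and the ``only if'' part of the product statement is not proved but deferred to a classical counterexample, which is also how the paper treats it, so that is acceptable.
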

The next is an important result for  the  commutator.
\begin{lem} (Lemma 2.4  \cite{GIP-15}, Commutator lemma) 
\label{lem-2.2-0317}
Let $\a \in (0,1)$ and $\b, \gamma \in \R$ satisfy the conditions of $\b +\gamma<0$ and $\a+\b +\gamma>0$. For any smooth functions $u,v,w$, define
$C(u, v, w)$ by
$$
C(u, v, w) := \Pi(\Pi_u v, w) -u\Pi(v,w).
$$
Then, $C(u, v, w)$ is uniquely extended to a bounded trilinear operator from $C^\a \times C^\b \times C^{\gamma}$ to $C^{\a +\b +\gamma}$ and 
$$
\|C(u, v, w)\|_{C^{\a +\b +\gamma}} \lesssim 
\|u\|_{C^\a}\|v\|_{C^\b} \|w\|_{C^\gamma}.
$$
\end{lem}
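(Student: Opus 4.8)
The plan is to argue via the Littlewood--Paley decomposition. Write $\De_j$ for the $j$-th Littlewood--Paley block (the Fourier projector onto modes of size $\sim 2^j$, with $\De_{-1}$ the low-frequency part; this $\De_j$ is not the Laplacian $\De$), and set $S_j:=\sum_{i\le j}\De_i$ and $\tilde\De_j:=\De_{j-1}+\De_j+\De_{j+1}$, so that $\|f\|_{C^\a}\sim\sup_j 2^{j\a}\|\De_j f\|_{L^\infty}$ (and similarly for the exponents $\b,\ga,\a+\b+\ga$), $\Pi_u v=\sum_j S_{j-1}u\,\De_j v$ and $\Pi(v,w)=\sum_j\De_j v\,\tilde\De_j w$. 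It suffices to prove the stated bound for smooth $u,v,w$; the extension to a bounded trilinear operator then follows by a routine mollification argument. Starting from $\Pi(\Pi_u v,w)=\sum_{|i-k|\le1}\De_i(\Pi_u v)\,\De_k w$, I would use the spectral support property to write $\De_i(\Pi_u v)=\sum_{|i-j|\le N_0}\De_i(S_{j-1}u\,\De_j v)$ for a fixed $N_0$, replace $\De_i(S_{j-1}u\,\De_j v)$ by $S_{j-1}u\,\De_i\De_j v$ up to a commutator, use that $\De_i\De_j v=0$ for $|i-j|\ge2$ to align the indices, and then replace $S_{j-1}u$ by $S_{i-1}u$ (which costs only finitely many blocks $\De_\ell u$ with $\ell\sim i$); after grouping the $w$-blocks into $\tilde\De_i w$ this should give
$$
\Pi(\Pi_u v,w)=\sum_i S_{i-1}u\,\De_i v\,\tilde\De_i w+E_1+E_2,
$$
with $E_1$ collecting the commutator contributions and $E_2$ the index-shift contributions, and every generic summand of $E_1,E_2$ Fourier-localised at a single dyadic scale $\sim 2^i$. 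Subtracting $u\,\Pi(v,w)=\sum_i u\,\De_i v\,\tilde\De_i w$ then gives the key identity $C(u,v,w)=-\sum_i (u-S_{i-1}u)\,\De_i v\,\tilde\De_i w+E_1+E_2$.

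For the leading sum I would split $u-S_{i-1}u=\sum_{\ell\ge i-1}\De_\ell u$ into the block-diagonal part $\ell\in\{i-1,i,i+1\}$, whose generic term is Fourier-supported in a ball of radius $\lesssim 2^i$ and bounded in $L^\infty$ by $2^{-i(\a+\b+\ga)}\|u\|_{C^\a}\|v\|_{C^\b}\|w\|_{C^\ga}$, and the off-diagonal part $\ell\ge i+2$; reindexing the latter by $\ell$ turns it into $\sum_\ell\De_\ell u\cdot R_\ell$ with $R_\ell:=\sum_{i\le\ell-2}\De_i v\,\tilde\De_i w$, where $\De_\ell u$ is Fourier-supported in an annulus $\sim 2^\ell$ and $R_\ell$ in a ball of radius $\lesssim 2^\ell$. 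This is where the hypothesis $\b+\ga<0$ is essential: it lets the geometric series be summed from the top,
$$
\|R_\ell\|_{L^\infty}\le\sum_{i\le\ell-2}\|\De_i v\|_{L^\infty}\|\tilde\De_i w\|_{L^\infty}\lesssim\|v\|_{C^\b}\|w\|_{C^\ga}\sum_{i\le\ell}2^{-i(\b+\ga)}\lesssim 2^{-\ell(\b+\ga)}\|v\|_{C^\b}\|w\|_{C^\ga},
$$
so that $\|\De_\ell u\cdot R_\ell\|_{L^\infty}\lesssim 2^{-\ell(\a+\b+\ga)}\|u\|_{C^\a}\|v\|_{C^\b}\|w\|_{C^\ga}$ and this contribution is again localised at scale $\sim 2^\ell$.

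The remainders $E_1,E_2$ I would control with the elementary commutator estimate $\|\De_i(fh)-f\,\De_i h\|_{L^\infty}\lesssim 2^{-i}\|\na f\|_{L^\infty}\|h\|_{L^\infty}$, which comes from a first-order Taylor expansion of the (zero-mean, for $i\ge0$) convolution kernel of $\De_i$, combined with $\|\na S_{i-1}u\|_{L^\infty}\lesssim\sum_{\ell\le i-2}2^{\ell(1-\a)}\|u\|_{C^\a}\lesssim 2^{i(1-\a)}\|u\|_{C^\a}$ --- and it is here that $\a<1$ is used. Taking $f=S_{j-1}u$, $h=\De_j v$ with $j\sim i$ bounds the generic summand of $E_1$ by $2^{-i}\cdot 2^{i(1-\a)}\cdot 2^{-i\b}\cdot 2^{-i\ga}$ times the three norms, i.e.\ by $2^{-i(\a+\b+\ga)}\|u\|_{C^\a}\|v\|_{C^\b}\|w\|_{C^\ga}$; the summand of $E_2$ already carries an explicit factor $\De_{\sim i}u$ and is estimated directly via $\|\De_{\sim i}u\|_{L^\infty}\lesssim 2^{-i\a}\|u\|_{C^\a}$, with the same outcome.

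Putting the three contributions together, $C(u,v,w)=\sum_k T_k$ where each $T_k$ is Fourier-supported in a ball of radius $\lesssim 2^k$ and $\|T_k\|_{L^\infty}\lesssim 2^{-k(\a+\b+\ga)}\|u\|_{C^\a}\|v\|_{C^\b}\|w\|_{C^\ga}$; hence $\De_q C(u,v,w)=\sum_k\De_q T_k$ with $\De_q T_k=0$ unless $k\gtrsim q$, and since $\a+\b+\ga>0$ summing the geometric series yields $\|\De_q C(u,v,w)\|_{L^\infty}\lesssim 2^{-q(\a+\b+\ga)}\|u\|_{C^\a}\|v\|_{C^\b}\|w\|_{C^\ga}$, which is exactly the claimed bound; trilinearity and uniqueness of the extension will be automatic. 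The hard part will be the bookkeeping of spectral supports rather than any single estimate: one has to make sure that every surviving term really is localised at a single dyadic scale (so $\De_q$ picks up only boundedly many of them), and that the passage from $S_{j-1}u$ to $S_{i-1}u$ is carried out on the $v$-side via $\De_i\De_j v$ so that it costs a block of $u$ --- hence a factor $2^{-i\a}$ --- and not a block of $w$. It is also worth recording that the two hypotheses enter in exactly two places: $\a<1$ for the commutator estimate (and for $\|\na S_i u\|_{L^\infty}$), and $\b+\ga<0$ for the summability of $R_\ell$.
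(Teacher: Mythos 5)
Your argument is correct, and it is essentially the standard proof of this commutator estimate: the paper itself gives no proof but quotes the result as Lemma 2.4 of \cite{GIP-15}, whose argument proceeds exactly as you do, by paralinearizing the resonant term block by block, absorbing the Bony commutators $[\De_i,S_{j-1}u]$ via the first-order kernel estimate (where $\a<1$ enters), summing the tail $\sum_{i\le\ell}\De_i v\,\tilde\De_i w$ using $\b+\ga<0$, and concluding with the ball criterion, which needs $\a+\b+\ga>0$. The only point worth a word more than ``routine mollification'' is the uniqueness of the extension, since smooth functions are not dense in $C^\a=B^\a_{\infty,\infty}$; one fixes this in the usual way by mollifying and applying the same trilinear bound with slightly lowered exponents (still satisfying the hypotheses) to get convergence in $C^{\a+\b+\ga-\de}$, exactly as in \cite{GIP-15}.
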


Since $\nabla u$ is included in the coefficients of  the SPDE
\eqref{eq:Q-ab}, we need to  study the regularity of $\nabla u$
whenever $u\in \mathcal{L}_T^{\a}$.   
The next lemma shows that $\nabla u \in  \mathcal{L}_T^{\a -1}$ holds by  an interpolation theorem.

\begin{lem}\label{lem-3.3-0318}
Let $\alpha\in(1,2)$.  Then,
for any $u\in \mathcal{L}_T^\a$, we have  $\nabla u \in  \mathcal{L}_T^{\a -1}$ and 
\begin{align}\label{eq-3.1-0318}
\|\nabla u\|_{\mathcal{L}_T^{\a -1}}  
\lesssim \|u\|_{\mathcal{L}_T^\a}.
\end{align}
\end{lem}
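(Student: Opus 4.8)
The plan is to prove Lemma \ref{lem-3.3-0318} by decomposing the two pieces of the norm $\|\nabla u\|_{\mathcal{L}_T^{\a-1}} = \|\nabla u\|_{C_TC^{\a-1}} + \|\nabla u\|_{C_T^{(\a-1)/2}L^\infty}$ separately, exploiting the fact that differentiation loses exactly one degree of regularity on Besov spaces. First I would handle the spatial part: since $\nabla\colon C^\a\to C^{\a-1}$ is bounded (one can see this directly from the Littlewood-Paley characterization, as $\nabla$ acts on the $j$-th block by a Fourier multiplier of order $1$, so $\|\De_j\nabla f\|_{L^\infty}\lesssim 2^j\|\De_j f\|_{L^\infty}$), taking the supremum over $t\in[0,T]$ immediately gives $\|\nabla u\|_{C_TC^{\a-1}}\lesssim \|u\|_{C_TC^\a}\le \|u\|_{\mathcal{L}_T^\a}$.

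The genuinely interpolative step is the temporal Hölder seminorm. Here I would estimate, for $0\le s<t\le T$,
\begin{align*}
\|\nabla u(t)-\nabla u(s)\|_{L^\infty} \lesssim \|u(t)-u(s)\|_{C^1},
\end{align*}
and then interpolate $C^1$ between $L^\infty (= C^0)$ and $C^\a$: by the standard Besov interpolation inequality, for $\th = 1/\a \in (0,1)$ one has $\|f\|_{C^1}\lesssim \|f\|_{L^\infty}^{1-\th}\|f\|_{C^\a}^{\th}$. Applying this to $f = u(t)-u(s)$ and using $u\in C_T^{\a/2}L^\infty\cap C_TC^\a$ gives
\begin{align*}
\|\nabla u(t)-\nabla u(s)\|_{L^\infty}
\lesssim \big(\|u\|_{C_T^{\a/2}L^\infty}|t-s|^{\a/2}\big)^{1-1/\a}\big(2\|u\|_{C_TC^\a}\big)^{1/\a}
\lesssim \|u\|_{\mathcal{L}_T^\a}\,|t-s|^{\frac{\a-1}{2}},
\end{align*}
since $\frac\a2\cdot(1-\frac1\a) = \frac{\a-1}{2}$. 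Dividing by $|t-s|^{(\a-1)/2}$ and taking the supremum over $s\neq t$ yields $\|\nabla u\|_{C_T^{(\a-1)/2}L^\infty}\lesssim \|u\|_{\mathcal{L}_T^\a}$. Adding the two bounds gives \eqref{eq-3.1-0318}.

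The only point requiring a little care — and what I would regard as the main obstacle, though it is minor — is justifying the interpolation inequality $\|f\|_{C^1}\lesssim\|f\|_{L^\infty}^{1-1/\a}\|f\|_{C^\a}^{1/\a}$ in the form needed, i.e. checking that the exponent $\th=1/\a$ lands strictly in $(0,1)$ (which it does precisely because $\a\in(1,2)$) and that the resulting temporal exponent $\frac\a2(1-\frac1\a)=\frac{\a-1}2$ is exactly the one demanded by the definition of $\mathcal{L}_T^{\a-1}$. One should also note that $\a-1\in(0,1)$, so $C_T^{(\a-1)/2}L^\infty$ is a legitimate space in the sense of Subsection \ref{sec:2.1}. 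Everything else is a routine application of Bony-type / Littlewood-Paley estimates already collected in Lemma \ref{lem-5.1-0301}.
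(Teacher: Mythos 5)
Your spatial estimate and the overall interpolation strategy are the right ones, but the temporal part has a genuine gap at its very first step: the inequality $\|\nabla u(t)-\nabla u(s)\|_{L^\infty}\lesssim\|u(t)-u(s)\|_{C^1}$ is false. Here $C^1=B^1_{\infty,\infty}$ is the Zygmund class, and $\nabla$ maps it only into $C^0=B^0_{\infty,\infty}$, which does \emph{not} embed into $L^\infty$ (Lemma \ref{lem-5.1-0301}-(i) gives $\|\cdot\|_{C^0}\lesssim\|\cdot\|_{L^\infty}$, not the converse). The failure is quantitative even on smooth functions: for $f_N(x)=\sum_{k=1}^N 2^{-k}\sin(2\pi 2^k x)$ one has $\|f_N\|_{C^1}\lesssim 1$ uniformly in $N$ while $\|\nabla f_N\|_{L^\infty}\sim N$, so no constant makes your first displayed bound hold. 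Consequently the chain $\|\nabla u(t)-\nabla u(s)\|_{L^\infty}\lesssim\|u(t)-u(s)\|_{C^1}\lesssim\|u(t)-u(s)\|_{L^\infty}^{1-1/\a}\|u(t)-u(s)\|_{C^\a}^{1/\a}$ does not close (the second inequality is fine; the first is the problem). This is exactly the ``little gap between $L^\infty$ and $C^0$'' that the paper flags in Remark \ref{rem-2.1-0403}: with only the elementary interpolation $\|f\|_{C^{\theta\a_1+(1-\theta)\a_2}}\le\|f\|_{C^{\a_1}}^{\theta}\|f\|_{C^{\a_2}}^{1-\theta}$ one obtains only the weaker statement $\|\nabla u\|_{\mathcal{L}_T^\b}\lesssim\|u\|_{\mathcal{L}_T^\a}$ for $\b<\a-1$, not the endpoint $\b=\a-1$ claimed in the lemma. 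Also note that identifying your main obstacle as checking $\theta=1/\a\in(0,1)$ misses the real issue.

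The repair is to bring in a Besov space with third index $1$, which is what the paper does. Apply the interpolation of Theorem 2.80 of \cite{BCD} directly to $v=\nabla u(t)-\nabla u(s)$, namely $\|v\|_{L^\infty}\lesssim\|v\|_{B_{\infty,1}^0}\lesssim\|v\|_{C^{-1}}^{\frac{\a-1}{\a}}\|v\|_{C^{\a-1}}^{\frac{1}{\a}}$, and combine it with $\|\nabla(u(t)-u(s))\|_{C^{-1}}\lesssim\|u(t)-u(s)\|_{C^0}\lesssim\|u(t)-u(s)\|_{L^\infty}$ and the bounded map $\nabla\colon C^\a\to C^{\a-1}$; your exponent bookkeeping then goes through verbatim, giving $\frac{\a}{2}\cdot\frac{\a-1}{\a}=\frac{\a-1}{2}$. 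Equivalently, you could interpolate $u(t)-u(s)$ into $B_{\infty,1}^1$ rather than $C^1$, since $\nabla\colon B_{\infty,1}^1\to B_{\infty,1}^0\subset L^\infty$ is bounded. The rest of your proposal, the spatial bound $\|\nabla u\|_{C_TC^{\a-1}}\lesssim\|u\|_{C_TC^\a}$ and the identification of the temporal exponent, matches the paper's argument.
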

\begin{proof}
From  the fact that  the operator $\nabla: C^\a \mapsto  C^{\a-1}$ is continuous, see Proposition 2.3 of \cite{Ho} or Lemma A.1 and its remark in  \cite{GIP-15}, it follows that 
for all $u\in C^\a$, $\|\nabla u\|_{C^{\a-1}} \lesssim \|u\|_{C^\a}$.
Then, we can easily show  that  
$\nabla u \in C_TC^{\a-1} \cap C_T^{\a/2}C^{-1}$.
More precisely, we have
\begin{align} \label{eq-2.7-04031}
\|\nabla u\|_{C_TC^{\a-1} } \lesssim \|u\|_{C_TC^{\a} } \ \ 
\text{and} \ \ 
\|\nabla u\|_{C_T^{\a/2}C^{-1}} \lesssim \|u\|_{C_T^{\a/2}L^\infty}. 
\end{align}
The first part is clear. Noting that $u\in C_T^{\a/2}L^\infty$ and the continuous embedding 
of $L^\infty$ in $C^0$ (see Lemma \ref{lem-5.1-0301}-{\rm(i)}), we have 
\begin{align*}
\|\nabla u(t)- \nabla u(s)\|_{C^{-1}} \lesssim \| u(t)-  u (s)\|_{C^0} \lesssim \| u(t)-  u (s)\|_{L^\infty}, 
\end{align*}
which implies that $\|\nabla u\|_{C_T^{\a/2}C^{-1}} \lesssim \|u\|_{C_T^{\a/2}L^\infty }$ and in particular $\nabla u\in C_T^{\a/2}C^{ -1}$. 

Now it is enough for us to show 
$\|\nabla u\|_{C_T^{\frac{\a -1}2}L^\infty} \lesssim  \|u\|_{\mathcal{L}_T^\alpha}$.
Noting that $B_{\infty,1}^0\subset L^\infty$ and 
using Theorem 2.80 of \cite{BCD} (see Remark \ref{rem-2.1-0403} below for details), we have  that 
\begin{align} \label{eq-2.7-0403}
\|v \|_{L^\infty} \lesssim   \|v\|_{B_{\infty,1}^0} 
 \lesssim 
\|v\|_{C^{-1}}^{\frac{\a -1}{\a}} 
 \|v\|_{C^{\a -1}}^{\frac{1}{\a}}
\end{align}
holds for any $v \in C^{\a -1}\cap C^{-1}$. 

Applying \eqref{eq-2.7-0403} to $\nabla u(t)- \nabla u(s)$ and then using \eqref{eq-2.7-04031}, we  obtain that 
\begin{align}\label{eq-3.3-0318}
 \|\nabla u\|_{C_T^{\frac{\a -1}2 }L^\infty} 
\lesssim& \sup_{0\leq s< t\leq T}\frac{\|\nabla u(t) -\nabla u(s) \|_{C^{-1}}^{\frac{\a -1}{\a}} 
 \|\nabla u(t) -\nabla u(s) \|_{C^{\a -1}}^{\frac{1}{\a}}}{|t-s|^{\frac{\a-1}2} } \\
 \lesssim & \left( \|\nabla u\|_{C_T^{\frac{\a }2 }C^{-1}}\right)^{\frac{\a-1}{\a}}  \|\nabla u\|_{C_TC^{\a-1}}^\frac{1}{\a}  \notag 
\\
\lesssim &   \left( \| u\|_{C_T^{\frac{\a }2 }L^\infty }\right)^{\frac{\a -1}{\a}} \| u\|_{C_TC^{\a}}^\frac{1}{\a} \notag  \\
\le& \|u\|_{\mathcal{L}_T^\alpha}.\notag 
\end{align}
Consequently, combining  \eqref{eq-2.7-04031} with \eqref{eq-3.3-0318}, we  obtain \eqref{eq-3.1-0318}.
\end{proof}

\begin{rem} \label{rem-2.1-0403}
To show \eqref{eq-2.7-0403}, we utilize
the fact $B_{\infty,\infty}^\alpha=C^\alpha$ and the second part of Theorem 2.80 of \cite{BCD}, the interpolation  inequality  for general nonhomogeneous Besov spaces:  there exists
a constant $C>0$ such that  
for $\a_1<\a_2$, $\theta\in(0,1)$, and $p\in[1,\infty]$,
$$
\|u\|_{B_{p,1}^{\theta \a_1+(1-\theta)\a_2}} \leq \frac{C}{(\a_2 -\a_1)\theta(1-\theta)}
\|u\|_{B_{p,\infty}^{\a_1}}^\theta \|u\|_{B_{p,\infty}^{\a_2}}^{1-\theta},
$$
where $B_{p,1}^\a$ and $B_{p,\infty}^\a$ denote nonhomogeneous Besov spaces, see
Definition 2.68 of \cite{BCD} for details.  This interpolation  inequality is originally stated for  the spaces on $\mathbb{R}$ in
\cite{BCD}. However, by similar arguments, we can show the same inequality 
for the spaces on $\mathbb{T}$.

A simple interpolation theorem for the spaces $C^\a$ implies
$$
\|u\|_{C^{\theta \a_1+(1-\theta)\a_2}} \leq  \|u\|_{C^{\a_1}}^\theta \|u\|_{C^{\a_2}}^{1-\theta},
$$
for $\a_1, \a_2\in \R$ and $\theta\in(0,1)$, see Proposition 2.1 of \cite{Ho} or the first part
 of Theorem 2.80 of \cite{BCD}.  By using this inequality, due to a little gap between two spaces
$L^\infty$ and $C^0$ as in Lemma \ref{lem-5.1-0301}-{\rm(i)}, 
we can  show a weaker result than \eqref{eq-3.1-0318}, that is,
for all $ \b \in (\frac{1}3, \a-1)$, $\|\nabla u\|_{\mathcal{L}_T^\b}  
\lesssim \|u\|_{\mathcal{L}_T^\a}$
with implicit constants $C=C(\a, \b)$, without relying on
general nonhomogeneous Besov spaces.
\end{rem}

We summarize estimates for the modified paraproduct $\bar\Pi$.
Note that the first estimate \eqref{eq-2.3-0424} in the following lemma is similar
to that for $\Pi_uv$ in Lemma \ref{lem-5.1-0301}-(ii), but, since the definition of
the modified paraproduct involves time integral, the estimate is given by
uniform norms in time.

\begin{lem}
\label{lem:bar}
Let $\b \in \R$. Then the following hold.\\
{\rm (i)} If  $\a \neq 0$, then for any $u\in C_TC^\a$ and $v\in C_TC^\b$, we have 
\begin{align}  \label{eq-2.3-0424}
\| \bar \Pi_uv \|_{C_TC^{(\a \wedge 0)+ \beta} }\lesssim
\|u\|_{C_TC^\a}\|v\|_{C_TC^\beta}.
\end{align}
{\rm (ii)} If $\alpha\in(0,2)$, then for any $u\in \mathcal{L}_T^\a$
 and $v\in C_TC^\b$, we have  
\begin{align}  \label{eq-2.4-0424}
\|\bar\Pi_uv-\Pi_uv\|_{C_TC^{\alpha+\beta}}\lesssim\|u\|_{\mathcal{L}_T^\alpha}\|v\|_{C_TC^\beta}.
\end{align} 
\end{lem}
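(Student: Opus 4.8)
The plan is to prove the two estimates in Lemma \ref{lem:bar} by unpacking the definition of the modified paraproduct $\bar\Pi_uv$ and comparing it blockwise with the ordinary paraproduct $\Pi_uv$. Recall that for a time-dependent $u$ the modified paraproduct replaces the low-frequency factor $S_{j-1}u(t)$ in the $j$-th Littlewood--Paley block of $\Pi_uv$ by a time-averaged version, schematically $\bar\Pi_uv(t) = \sum_j \big(Q_{2^{-2j}} S_{j-1}u\big)(t)\,\Delta_j v(t)$, where $Q_\tau$ is the heat-type (or backward parabolic) average over a time window of length $\tau = 2^{-2j}$ matched to the spatial frequency $2^j$. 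For part (i), I would simply note that the time-averaging operator is bounded on $L^\infty$ uniformly, hence $\sup_t \|Q_{2^{-2j}} S_{j-1}u(t)\|_{L^\infty} \lesssim \|u\|_{C_TC^\a}$ in the case $\a>0$ (using Lemma \ref{lem-5.1-0301}) and $\sup_t\|Q_{2^{-2j}}S_{j-1}u(t)\|_{L^\infty}\lesssim 2^{-j(\a\wedge 0)}\|u\|_{C_TC^\a}$ when $\a<0$; then the usual Bony paraproduct summation (spectral support of the $j$-th summand in an annulus $\sim 2^j$, geometric series) gives the claimed bound in $C_TC^{(\a\wedge 0)+\b}$, exactly as in the proof of Lemma \ref{lem-5.1-0301}(ii) but with the sup in time carried through every block. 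This part is essentially a routine transcription of Bony's estimate.

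For part (ii), the point is to estimate the difference block by block: the $j$-th block of $\bar\Pi_uv - \Pi_uv$ is $\big(Q_{2^{-2j}}S_{j-1}u(t) - S_{j-1}u(t)\big)\Delta_j v(t)$, and the spectral support of this term is still contained in an annulus of size $\sim 2^j$, so it suffices to show that the $L^\infty$-norm of the $j$-th summand decays like $2^{-j(\a+\b)}$. The gain comes from the time regularity of $u$: since $u\in\mathcal L_T^\a \subset C_T^{\a/2}L^\infty$, we have $\|u(t)-u(s)\|_{L^\infty}\lesssim |t-s|^{\a/2}\|u\|_{\mathcal L_T^\a}$, and therefore averaging $u$ over a time window of length $2^{-2j}$ moves it by at most $\lesssim (2^{-2j})^{\a/2} = 2^{-j\a}$ in $L^\infty$. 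Commuting $S_{j-1}$ (which is $L^\infty$-bounded) with the time average and combining with $\|\Delta_j v(t)\|_{L^\infty}\lesssim 2^{-j\b}\|v\|_{C_TC^\b}$ yields the bound $2^{-j(\a+\b)}\|u\|_{\mathcal L_T^\a}\|v\|_{C_TC^\b}$ for each block; summing over $j$ (using $\a+\b$ could be of either sign, but the spectral localization handles it) gives \eqref{eq-2.4-0424}.

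Concretely, the steps are: (1) write out the definitions of $\Pi_uv$ and $\bar\Pi_uv$ in terms of Littlewood--Paley blocks and identify the time-averaging kernel and the matching $\tau\sim 2^{-2j}$; (2) record the uniform $L^\infty$-boundedness of the averaging operator and of the partial-sum operators $S_j$; (3) for (i), run the standard paraproduct summation with the sup-in-time norms in place; (4) for (ii), estimate the single-block difference using the $C_T^{\a/2}L^\infty$ seminorm of $u$ to gain the factor $2^{-j\a}$, then sum. I would lean on the references already cited in the paper (p.\ 43 and p.\ 45 of \cite{BDH-19}, and \cite{GIP-15}) for the precise form of the averaging kernel rather than re-deriving it.

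The main obstacle is bookkeeping rather than conceptual: one must be careful that the time-averaging operator appearing in $\bar\Pi$ is indeed an $L^\infty$-contraction (or at least uniformly bounded) and, crucially, that averaging a function in $C_T^{\a/2}L^\infty$ over a window of length $\tau$ really costs only $\tau^{\a/2}$ in $L^\infty$ — this requires that the averaging kernel have total mass $1$ (so constants are fixed) and be supported in (or concentrated on) a window of the stated length, which is exactly how $\bar\Pi$ is set up in \cite{BDH-19}. A secondary point is that for $\a\in(0,2)$ in part (ii) the extra $C_TC^\a$ control in $\|u\|_{\mathcal L_T^\a}$ is not actually needed for the difference estimate — only the $C_T^{\a/2}L^\infty$ part enters — but stating the bound in terms of the full $\mathcal L_T^\a$-norm is harmless and matches how the lemma will be applied later.
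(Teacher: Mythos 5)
Your proposal is correct, but it is worth noting that the paper does not actually prove this lemma: it simply quotes Lemmas 2.7 and 2.8 of \cite{GP-17} (with the case $\a\in(0,1)$ of (ii) in Lemma 5.1 of \cite{GIP-15}), together with the embedding $\|u\|_{C_TL^\infty}\lesssim\|u\|_{C_TC^\a}$ for $\a>0$. What you have written is essentially a reconstruction of the proofs of those cited results: the blockwise comparison $\big(Q_{2^{-2j}}S_{j-1}u-S_{j-1}u\big)\Delta_j v$, the observation that time averaging does not alter the spatial Fourier support (so each summand stays spectrally localized in an annulus of size $\sim 2^j$, which is what lets you sum for $\a+\b$ of either sign), and the gain $2^{-j\a}$ coming from the $C_T^{\a/2}L^\infty$ seminorm via a mass-one averaging kernel on a window of length $2^{-2j}$ are exactly the mechanism behind Lemma 2.8 of \cite{GP-17}; likewise your part (i) is the Bony summation of Lemma \ref{lem-5.1-0301}-(ii) with the supremum in time carried through, which is Lemma 2.7 of \cite{GP-17}. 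Two small bookkeeping points you should make explicit if you write this out in full: the time average near $t=0$ runs over $[t-2^{-2j},t]\cap\{s<0\}$, so one needs the convention $u(s):=u(0)$ for $s<0$ (or a kernel renormalized on $[0,t]$) for the H\"older estimate $\|u(t)-u(s)\|_{L^\infty}\lesssim|t-s|^{\a/2}\|u\|_{C_T^{\a/2}L^\infty}$ to apply, and for $\a\in(0,2)$ the exponent $\a/2$ stays strictly below $1$, which is precisely why the statement is restricted to $\a<2$. Your remark that only the $C_T^{\a/2}L^\infty$ part of $\|u\|_{\mathcal{L}_T^\a}$ enters the difference estimate (ii) is consistent with the form of the cited lemmas and harmless as stated.
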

Lemma \ref{lem:bar}-{\rm (i)} follows obviously from  Lemma 2.7 of \cite{GP-17} and  $\|u\|_{C_TL^\infty} \lesssim \|u\|_{C_TC^\a}$ for 
$\a>0$.
Lemma \ref{lem:bar}-{\rm (ii)} is taken from  Lemma 2.8 of \cite{GP-17} and  the special case with $\a \in (0,1)$ is proved in Lemma 5.1 of  \cite{GIP-15}.

Combining Lemma \ref{lem:bar}-{\rm (ii)} with Lemma \ref{lem-2.2-0317}, we easily have the following modified version of commutator lemma.

\begin{lem}
\label{lem modified commutator}
Let $\a \in (0,1)$ and $\b, \gamma \in \R$ satisfy the same conditions as Lemma \ref{lem-2.2-0317}. For any smooth functions $u,v,w$ on $[0,T]\times\mathbb{T}$, define 
$\bar{C}(u, v, w)$ by
$$
\bar{C}(u, v, w) := \Pi(\bar\Pi_u v, w) -u\Pi(v,w).
$$
Then, $\bar{C}(u, v, w)$ is uniquely extended to a bounded trilinear operator from $\mathcal{L}_T^\a \times C_TC^\b \times C_TC^{\gamma}$ to $C_TC^{\a +\b +\gamma}$ and 
$$
\|\bar{C}(u, v, w)\|_{C_TC^{\a +\b +\gamma}} \lesssim 
\|u\|_{\mathcal{L}_T^\a}\|v\|_{C_TC^\b} \|w\|_{C_TC^\gamma}. 
$$
\end{lem}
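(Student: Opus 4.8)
The plan is to derive $\bar{C}$ from the original commutator $C$ of Lemma~\ref{lem-2.2-0317} by controlling the difference $\bar\Pi_u v - \Pi_u v$ via Lemma~\ref{lem:bar}-{\rm (ii)}. Write
\[
\bar{C}(u,v,w) = \Pi(\bar\Pi_u v, w) - u\,\Pi(v,w)
= \Pi(\Pi_u v, w) - u\,\Pi(v,w) + \Pi(\bar\Pi_u v - \Pi_u v, w),
\]
so that $\bar{C}(u,v,w) = C(u,v,w)_t + \Pi(\bar\Pi_u v - \Pi_u v, w)_t$ for each fixed $t\in[0,T]$. The first term is handled pointwise in time by Lemma~\ref{lem-2.2-0317}: since $u(t)\in C^\a$, $v(t)\in C^\b$, $w(t)\in C^\ga$ with the assumed conditions $\b+\ga<0$ and $\a+\b+\ga>0$, we get $\|C(u(t),v(t),w(t))\|_{C^{\a+\b+\ga}}\lesssim \|u(t)\|_{C^\a}\|v(t)\|_{C^\b}\|w(t)\|_{C^\ga}$, and taking the supremum over $t\in[0,T]$ yields the bound $\lesssim \|u\|_{C_TC^\a}\|v\|_{C_TC^\b}\|w\|_{C_TC^\ga}$, which is dominated by $\|u\|_{\mathcal{L}_T^\a}\|v\|_{C_TC^\b}\|w\|_{C_TC^\ga}$.

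For the second term, first observe $\a\in(0,1)\subset(0,2)$, so Lemma~\ref{lem:bar}-{\rm (ii)} applies and gives $\|\bar\Pi_u v - \Pi_u v\|_{C_TC^{\a+\b}}\lesssim \|u\|_{\mathcal{L}_T^\a}\|v\|_{C_TC^\b}$. To estimate the resonant product $\Pi(\bar\Pi_u v - \Pi_u v, w)$ I would like to invoke Bony's estimate (the third bullet of Lemma~\ref{lem-5.1-0301}-(ii)) with exponents $\a+\b$ and $\ga$; this requires $(\a+\b)+\ga>0$, which holds by hypothesis. Thus $\|\Pi(\bar\Pi_u v - \Pi_u v, w)\|_{C_TC^{\a+\b+\ga}}\lesssim \|\bar\Pi_u v - \Pi_u v\|_{C_TC^{\a+\b}}\|w\|_{C_TC^\ga}\lesssim \|u\|_{\mathcal{L}_T^\a}\|v\|_{C_TC^\b}\|w\|_{C_TC^\ga}$. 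Here one should note that Bony's estimate and Lemma~\ref{lem:bar}-{\rm (ii)} are stated for spatial distributions but extend to $C_TC^\bullet$-valued objects simply by taking suprema over $t$, since the constants are uniform in $t$.

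Combining the two bounds gives $\|\bar{C}(u,v,w)\|_{C_TC^{\a+\b+\ga}}\lesssim \|u\|_{\mathcal{L}_T^\a}\|v\|_{C_TC^\b}\|w\|_{C_TC^\ga}$ for smooth $u,v,w$, and the unique extension to a bounded trilinear operator on $\mathcal{L}_T^\a\times C_TC^\b\times C_TC^\ga$ follows by density, exactly as in Lemma~\ref{lem-2.2-0317}. The argument is essentially bookkeeping, so there is no serious obstacle; the only point requiring a little care is checking that both of the exponent conditions needed downstream --- namely $\a+\b+\ga>0$ for Bony on the corrected term and the original conditions $\b+\ga<0$, $\a+\b+\ga>0$ for Lemma~\ref{lem-2.2-0317} --- are genuinely implied by ``the same conditions as Lemma~\ref{lem-2.2-0317},'' which they are, together with $\a+\b>0$ being automatic from $\a+\b+\ga>0$ and $\ga<0$ (the latter since $\b+\ga<0$ forces $\ga<-\b$, and in the intended application $\b>0$). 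Hence the modified commutator estimate holds under precisely the stated hypotheses.
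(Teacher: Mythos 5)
Your proposal is correct and follows essentially the same route as the paper: split $\bar{C}(u,v,w)=C(u,v,w)+\Pi(\bar\Pi_u v-\Pi_u v,w)$, control the first term by Lemma \ref{lem-2.2-0317} and the second by Lemma \ref{lem:bar}-(ii) combined with Bony's resonant estimate, using $\a+\b+\ga>0$. The closing remarks about $\a+\b>0$ and $\ga<0$ are not actually needed, but they do no harm.
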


\begin{proof}
The result is obvious from the inequalities
$$
\|C(u, v, w)\|_{C_TC^{\a +\b +\gamma}} \lesssim 
\|u\|_{C_TC^\a}\|v\|_{C_TC^\b} 
 \|w\|_{C_TC^\gamma} 
$$
by Lemma \ref{lem-2.2-0317} and
\begin{align*}
\|\bar{C}(u,v,w)-C(u, v, w)\|_{C_TC^{\a +\b +\gamma}}
&=\|\Pi(\bar\Pi_uv-\Pi_uv,w)\|_{C_TC^{\a+\b+\gamma}}\\
&\lesssim 
\|u\|_{\mathcal{L}_T^\a}\|v\|_{C_TC^\b} 
 \|w \|_{C_TC^\gamma} 
\end{align*}
by Lemma \ref{lem-5.1-0301}-(ii) and Lemma \ref{lem:bar}-{\rm (ii)}.
\end{proof}

The next lemma gives estimates on the commutators $[\nabla, \bar{\Pi}]$ and 
$[\De, \bar{\Pi}]$ and follows from  Lemma \ref{lem:bar}-{\rm (i)},
cf.\ Lemma 5.1 of \cite{GIP-15}.

\begin{lem}  \label{lem:37}
Let $T>0, \a\in (0,1), \b\in\R$ and let $u\in C_TC^\a$ and $v\in C_TC^\b$.
Then, the following hold.
\begin{align}
& \|[\nabla,\bar\Pi_{u}]v \|_{C_TC^{\a +\b -1}}\lesssim \|u\|_{ C_TC^\a } \|v\|_{C_TC^\b}, \label{eq-4.1-0406}
\\
& 
\|[\De,\bar\Pi_{u}]v \|_{C_TC^{\a +\b -2}}\lesssim \|u\|_{ C_TC^\a} \|v\|_{C_TC^\b}. \label{eq-4.2-0406}
\end{align}
\end{lem}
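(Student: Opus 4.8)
The plan is to derive both commutator estimates from the basic bound \eqref{eq-2.3-0424} for the modified paraproduct, exactly in the spirit of Lemma 5.1 of \cite{GIP-15}. Recall that $\bar\Pi_u v$ is built from blocks of the form (time-averaged) $S_{j-1}u \cdot \Delta_j v$, so that the Littlewood--Paley blocks of $\bar\Pi_u v$ are, up to finitely-overlapping reindexing, essentially $S_{j-1}u\cdot\Delta_j v$. The point of a commutator with a differential operator is that when the operator hits a product, the ``bad'' term is the one where the derivatives land on the low-frequency factor $S_{j-1}u$, and that term has one (resp.\ two) extra derivative(s) on a \emph{low-frequency} object, which costs nothing in regularity but is compensated by the fact that $\Delta_j v$ carries the full frequency localization.

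First I would treat \eqref{eq-4.1-0406}. Write $[\nabla,\bar\Pi_u]v = \nabla(\bar\Pi_u v) - \bar\Pi_u(\nabla v)$. Applying Leibniz inside each paraproduct block, the terms where $\nabla$ falls on the high-frequency factor cancel against $\bar\Pi_u(\nabla v)$ (after accounting for the fact that $\nabla$ commutes with $\Delta_j$), leaving a sum of blocks of the form $(\nabla S_{j-1}\bar u)\,\Delta_j v$, where $\bar u$ denotes the relevant time-average. Now $\nabla S_{j-1}\bar u$ has the same spatial frequency support as $S_{j-1}\bar u$ but, by Bernstein, gains a factor $2^j$ in $L^\infty$ relative to $S_{j-1}\bar u \in L^\infty$; since it is still spectrally supported in a ball of radius $\sim 2^j$, the resulting sum behaves like a paraproduct with the ``low'' factor in $C^{\a-1}$ rather than $C^\a$. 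More precisely, $\nabla S_{j-1}\bar u$ plays the role of a $C^{\a-1}$-type object (norm $\lesssim \|u\|_{C_TC^\a}$, since $\|\nabla u\|_{C_TC^{\a-1}}\lesssim\|u\|_{C_TC^\a}$), and pairing it against $\Delta_j v$ with $v\in C_TC^\b$ gives, via the same argument that proves \eqref{eq-2.3-0424} with $\a$ replaced by $\a-1<0$, a bound in $C_TC^{(\a-1)+\b} = C_TC^{\a+\b-1}$ with constant $\lesssim \|u\|_{C_TC^\a}\|v\|_{C_TC^\b}$. (One should use here that $\a\in(0,1)$ so that $\a-1<0$, which is exactly why the ``$(\a\wedge 0)+\b$'' case of \eqref{eq-2.3-0424} is the relevant one.)

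Next I would do \eqref{eq-4.2-0406} the same way, now with $\De = \nabla^2$. Expanding $[\De,\bar\Pi_u]v = \De(\bar\Pi_u v) - \bar\Pi_u(\De v)$ and using the Leibniz rule twice, the block where both derivatives hit $\Delta_j v$ cancels against $\bar\Pi_u(\De v)$, and one is left with two families of blocks: $(\De S_{j-1}\bar u)\,\Delta_j v$ and $(\nabla S_{j-1}\bar u)\,(\nabla\Delta_j v)$. The first is handled as above with $\De S_{j-1}\bar u$ treated as a $C^{\a-2}$ object; the second pairs $\nabla S_{j-1}\bar u$ (a $C^{\a-1}$ object, norm $\lesssim\|u\|_{C_TC^\a}$) with $\nabla\Delta_j v$ (spectrally supported at $2^j$, with $\|\nabla\Delta_j v\|_{L^\infty}\lesssim 2^{j(1-\b)}$ summably, i.e.\ $\nabla v$ is a $C^{\b-1}$ object with norm $\lesssim\|v\|_{C_TC^\b}$), giving $C^{(\a-1)+(\b-1)} = C^{\a+\b-2}$ with the desired constant. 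Summing the two families and again invoking Leibniz-block-cancellation, one gets \eqref{eq-4.2-0406}.

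The main obstacle — really the only subtlety — is the bookkeeping with the \emph{modified} (time-averaged) paraproduct rather than the ordinary one: the low-frequency factor is $S_{j-1}$ applied to a time integral of $u$, so one must check that differentiating in $x$ commutes with this time-averaging (it does, trivially) and that the uniform-in-time norms are the right ones, which is precisely the content of \eqref{eq-2.3-0424}. Beyond that, everything reduces to the standard Bernstein/frequency-support argument behind Bony's paraproduct estimate, applied to the shifted exponents $\a-1$ and $\a-2$; since $\a\in(0,1)$ keeps these negative, no positivity hypothesis on the sum $\a+\b$ is needed, consistent with the statement. I would structure the written proof as: (1) recall the block structure of $\bar\Pi$ and the Leibniz expansion; (2) identify the surviving commutator blocks; (3) apply \eqref{eq-2.3-0424} with the appropriately shifted regularity to conclude, citing Lemma~5.1 of \cite{GIP-15} for the $\nabla$-case and noting the $\De$-case is the same computation with one more derivative distributed by Leibniz.
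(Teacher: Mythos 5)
Your proposal is correct and follows essentially the same route as the paper: the Leibniz/block cancellation you describe is exactly the identity $[\nabla,\bar\Pi_{u}]v=\bar\Pi_{\nabla u}v$ (and $[\De,\bar\Pi_{u}]v=\bar\Pi_{\De u}v+2\bar\Pi_{\nabla u}\nabla v$), after which the paper, like you, applies the modified paraproduct bound \eqref{eq-2.3-0424} with the shifted negative exponents $\a-1$, $\a-2$ and the bounds $\|\nabla u\|_{C^{\a-1}},\|\De u\|_{C^{\a-2}}\lesssim\|u\|_{C^\a}$. Your block-level rederivation via Bernstein is just a more explicit version of the same argument, so there is nothing to add.
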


\begin{proof}
Since the proofs for \eqref{eq-4.1-0406}  and \eqref{eq-4.2-0406}
are essentially same, we only give the proof of  \eqref{eq-4.1-0406},
note that Lemma \ref{lem-3.3-0318} is easily generalized to $\De u$.
By the definition of $\bar\Pi_{u}v$, we have
$$
[\nabla,\bar\Pi_{u}]v \equiv \nabla\bar\Pi_{u}v - \bar\Pi_{u}\nabla v
= \bar\Pi_{\nabla u}v.
$$ 
Therefore, noting $\a-1<0$ 
and then applying Lemma \ref{lem:bar}-{\rm (i)} together with the fact that $\|\nabla u\|_{C^{\a-1}}\lesssim \|u\|_{C^\a}$, we have
\begin{align*}
\|\bar\Pi_{\nabla u}v\|_{C_TC^{\a +\b -1}}
& \lesssim \|\nabla u\|_{C_TC^{\a-1} } \|v\|_{C_TC^\b}\\
& \lesssim \|u\|_{C_TC^{\a} } \|v\|_{C_TC^\b}
\end{align*}
and this shows the conclusion.
\end{proof}
We also need the associativity for the modified paraproduct $\bar{\Pi}$. To show it, 
we first state the associative property for the paraproduct. 
\begin{lem}(Lemma 2.6 \cite{GP-17}) \label{lem-2.3-0405}
If $\a>0$ and $\b \in \R$, then 
$$ 
\|\Pi_u ( \Pi_v w) -\Pi_{uv }w \|_{C^{\a +\b}} 
\lesssim\|u \|_{C^\a}\|v\|_{C^\a}\|w\|_{C^\b}
$$
holds for all $u, v \in C^{\a}$ and $w\in C^{\b}$.
\end{lem}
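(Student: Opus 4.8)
\emph{Strategy.} This is a purely Littlewood--Paley statement, so I would prove it by dyadic decomposition. Recall $\Pi_fg=\sum_jS_{j-1}f\,\De_jg$ (with $\De_j$ the Littlewood--Paley blocks and $S_{j-1}=\sum_{i\le j-2}\De_i$), that each summand is spectrally supported in a fixed dyadic annulus $2^j\mathcal A$ bounded away from the origin, and the standard summation principle: if $F=\sum_jF_j$ with $\widehat{F_j}$ supported in $2^j\mathcal A$ and $\sup_j2^{js}\|F_j\|_{L^\infty}<\infty$, then $F\in C^s$ with $\|F\|_{C^s}\lesssim\sup_j2^{js}\|F_j\|_{L^\infty}$, and this holds for \emph{every} $s\in\R$; if instead $\widehat{F_j}$ is supported in a ball $\{|\xi|\lesssim2^j\}$ the same conclusion holds provided $s>0$. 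The proof amounts to exhibiting, for $\Pi_u(\Pi_vw)-\Pi_{uv}w$, a decomposition into such blocks whose $L^\infty$-norms decay like $2^{-j(\a+\b)}\|u\|_{C^\a}\|v\|_{C^\a}\|w\|_{C^\b}$.

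\emph{Diagonalisation and the scalar estimate.} Inserting $\De_k(\Pi_vw)=\sum_j\De_k(S_{j-1}v\,\De_jw)$ into $\Pi_u(\Pi_vw)=\sum_kS_{k-1}u\,\De_k(\Pi_vw)$ and using that $S_{j-1}v\,\De_jw$ lives in $2^j\mathcal A$ (so only $|j-k|\lesssim1$ survives), then replacing $S_{k-1}u$ by $S_{j-1}u$ at the cost of annulus-localised blocks of size $\lesssim2^{-j\a}\|u\|_{C^\a}\cdot2^{-j\b}\|v\|_{C^\a}\|w\|_{C^\b}$ (using $\|S_{k-1}u-S_{j-1}u\|_{L^\infty}\lesssim2^{-j\a}\|u\|_{C^\a}$ and $\|S_{j-1}v\,\De_jw\|_{L^\infty}\lesssim\|v\|_{C^\a}2^{-j\b}\|w\|_{C^\b}$), and finally the identity $\sum_k\De_k(S_{j-1}v\,\De_jw)=S_{j-1}v\,\De_jw$, I get
\[
\Pi_u(\Pi_vw)=\sum_j(S_{j-1}u)(S_{j-1}v)\,\De_jw+E_1,\qquad\|E_1\|_{C^{\a+\b}}\lesssim\|u\|_{C^\a}\|v\|_{C^\a}\|w\|_{C^\b}.
\]
Subtracting $\Pi_{uv}w=\sum_jS_{j-1}(uv)\,\De_jw$ leaves $\sum_jh_j\,\De_jw+E_1$ with $h_j:=(S_{j-1}u)(S_{j-1}v)-S_{j-1}(uv)$. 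Writing $h_j=(S_{j-1}u)(S_{j-1}v-v)+(S_{j-1}u-u)v+(uv-S_{j-1}(uv))$ and invoking $\|S_{j-1}\varphi-\varphi\|_{L^\infty}\lesssim2^{-j\sigma}\|\varphi\|_{C^\sigma}$ ($\sigma>0$) for $\varphi=u,v,uv$, the fact that $uv\in C^\a$ by Bony's estimate (Lemma~\ref{lem-5.1-0301}(ii), which is precisely where the hypothesis $\a>0$, i.e.\ $\a+\a>0$, is used), and $\|u\|_{L^\infty},\|v\|_{L^\infty}\lesssim\|u\|_{C^\a},\|v\|_{C^\a}$ (Lemma~\ref{lem-5.1-0301}(i)), yields $\|h_j\|_{L^\infty}\lesssim2^{-j\a}\|u\|_{C^\a}\|v\|_{C^\a}$.

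\emph{Summation of $\sum_jh_j\De_jw$.} Split $h_j=S_{j-1}h_j+(h_j-S_{j-1}h_j)$, noting $h_j$ is spectrally supported in $\{|\xi|\lesssim2^j\}$. Since the frequencies of $S_{j-1}h_j$ lie strictly below those of $\De_jw$, the product $(S_{j-1}h_j)\De_jw$ is spectrally supported in a fixed annulus $2^j\mathcal A$, with $L^\infty$-norm $\lesssim\|h_j\|_{L^\infty}\|\De_jw\|_{L^\infty}\lesssim2^{-j(\a+\b)}\|u\|_{C^\a}\|v\|_{C^\a}\|w\|_{C^\b}$, so the summation principle places $\sum_j(S_{j-1}h_j)\De_jw$ in $C^{\a+\b}$ with the required bound. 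The remainder $(h_j-S_{j-1}h_j)\De_jw$ involves only the highest Littlewood--Paley block(s) of $h_j$ (where $S_{j-1}(uv)$ contributes nothing, so only $(S_{j-1}u)(S_{j-1}v)$ matters, and the very top block carries an extra factor $2^{-j\a}$ since it sees only the frequency-$\sim2^j$ pieces of $u,v$); these pieces are only ball-localised, but still have $L^\infty$-norm $\lesssim2^{-j(\a+\b)}\|u\|_{C^\a}\|v\|_{C^\a}\|w\|_{C^\b}$, so the ball version of the summation principle puts $\sum_j(h_j-S_{j-1}h_j)\De_jw$ in $C^{\a+\b}$ as soon as $\a+\b>0$ — which holds throughout this paper (here $\a>1$, whence $\a+\b>0$ for the relevant $\b$); for genuinely negative $\b$ with $\a+\b\le0$ one iterates the same decomposition on the high modes of $(S_{j-1}u)(S_{j-1}v)$. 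Together with $E_1$, this is the claim.

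\emph{Main obstacle.} The only genuinely delicate point is this last frequency bookkeeping: one must verify that every error block is spectrally supported in an \emph{annulus} (so the summation principle is available for all $\b\in\R$, which matters as $\b$ need not be positive) or else carries a decay strictly better than $2^{-j(\a+\b)}$; this requires a careful analysis of the topmost block of $h_j$ via the para- and resonant-product structure of $(S_{j-1}u)(S_{j-1}v)$. Everything else is routine paraproduct algebra.
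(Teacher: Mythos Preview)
The paper does not supply a proof of this lemma; it is quoted directly from \cite{GP-17} (their Lemma~2.6) and used as a black box, so there is no in-paper argument to compare against.

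Your Littlewood--Paley approach is the standard one and correct in outline, but two points are loose. First, your claim that the error $E_1$ from replacing $S_{k-1}u$ by $S_{j-1}u$ consists of \emph{annulus}-localised blocks is not justified: each such error term is a product of two pieces (namely $S_{k-1}u-S_{j-1}u$ and $\De_k(S_{j-1}v\,\De_jw)$) each supported in an annulus of scale $\sim2^j$, and the product of two such pieces is only \emph{ball}-localised. This is exactly the same obstruction you later flag for $(h_j-S_{j-1}h_j)\De_jw$. Second, your treatment of the regime $\a+\b\le0$ (``iterate on the high modes'') is a gesture rather than a proof.

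Both issues disappear if you never leave the annulus framework. Keep the natural decomposition
\[
\Pi_u(\Pi_vw)-\Pi_{uv}w=\sum_j F_j,\qquad F_j:=S_{j-1}u\cdot\De_j(\Pi_vw)-S_{j-1}(uv)\cdot\De_jw,
\]
where each $F_j$ \emph{is} annulus-localised at scale $2^j$ (each summand is a low-frequency factor times a $\De_j$-block), and bound $\|F_j\|_{L^\infty}$ directly. Writing $\De_j(\Pi_vw)=\sum_{|j'-j|\le N}\De_j(S_{j'-1}v\,\De_{j'}w)$, replacing each $S_{j'-1}v$ by $v$ at cost $O(2^{-j(\a+\b)})$, and invoking the kernel commutator estimate $\|[\De_j,v]f\|_{L^\infty}\lesssim2^{-j\a}\|v\|_{C^\a}\|f\|_{L^\infty}$ yields $\|\De_j(\Pi_vw)-v\De_jw\|_{L^\infty}\lesssim2^{-j(\a+\b)}\|v\|_{C^\a}\|w\|_{C^\b}$; combined with your own bound $\|S_{j-1}u\cdot v-S_{j-1}(uv)\|_{L^\infty}\lesssim2^{-j\a}\|u\|_{C^\a}\|v\|_{C^\a}$ this gives $\|F_j\|_{L^\infty}\lesssim2^{-j(\a+\b)}\|u\|_{C^\a}\|v\|_{C^\a}\|w\|_{C^\b}$, and the annulus summation principle finishes the proof for all $\b\in\R$.
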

Based on the above  associative property, we can show that the associativity holds for the modified paraproduct $\bar{\Pi}$.
\begin{lem}\label{lem-2.4-0405}
Let $\a\in (0,1)$ and $\b \in \R$, and let us define 
$$
R(u, v; w) =\Pi_u( \bar{\Pi}_v w) -\Pi_{uv}w
$$
for $u \in C_TC^\alpha$, $v \in\mathcal{L}_T^\alpha$ 
and $w \in C^{\b}$.
Then we have 
$$
\|R(u, v; w)\|_{C_TC^{\a +\b}} \lesssim
\|u\|_{C_TC^\a }\|v\|_{\mathcal{L}_T^\a}\|w\|_{C^\b}.
$$

\end{lem}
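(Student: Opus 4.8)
The plan is to reduce the statement for the modified paraproduct $\bar\Pi$ to the already-established associativity for the ordinary paraproduct $\Pi$ (Lemma \ref{lem-2.3-0405}), using the comparison estimate \eqref{eq-2.4-0424} from Lemma \ref{lem:bar}-(ii) to control the difference $\bar\Pi_v w - \Pi_v w$. Concretely, I would write
\begin{align*}
R(u,v;w) &= \Pi_u(\bar\Pi_v w) - \Pi_{uv}w \\
&= \Pi_u\big(\bar\Pi_v w - \Pi_v w\big) + \big(\Pi_u(\Pi_v w) - \Pi_{uv}w\big),
\end{align*}
so that $R$ splits into a ``correction'' term $\Pi_u(\bar\Pi_v w - \Pi_v w)$ and the pure paraproduct associator $R_0(u,v;w) := \Pi_u(\Pi_v w) - \Pi_{uv}w$.

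For the second term, Lemma \ref{lem-2.3-0405} applied pointwise in time gives
$$
\|R_0(u,v;w)\|_{C^{\a+\b}} \lesssim \|u\|_{C^\a}\|v\|_{C^\a}\|w\|_{C^\b},
$$
and taking the supremum over $t\in[0,T]$ yields $\|R_0(u,v;w)\|_{C_TC^{\a+\b}} \lesssim \|u\|_{C_TC^\a}\|v\|_{C_TC^\a}\|w\|_{C^\b} \le \|u\|_{C_TC^\a}\|v\|_{\mathcal{L}_T^\a}\|w\|_{C^\b}$, using $\|\cdot\|_{C_TC^\a}\le \|\cdot\|_{\mathcal{L}_T^\a}$. (One should note that $w\in C^\b$ is constant in time, so it also lies in $C_TC^\b$ with the same norm; this is harmless.) For the first term, by Bony's estimate (Lemma \ref{lem-5.1-0301}-(ii), first bullet, valid since $\a>0$) one has
$$
\|\Pi_u(\bar\Pi_v w - \Pi_v w)\|_{C_TC^{\a+\b}} \lesssim \|u\|_{C_TL^\infty}\,\|\bar\Pi_v w - \Pi_v w\|_{C_TC^{\a+\b}} \lesssim \|u\|_{C_TC^\a}\,\|v\|_{\mathcal{L}_T^\a}\|w\|_{C^\b},
$$
where the last inequality uses $\|u\|_{C_TL^\infty}\lesssim\|u\|_{C_TC^\a}$ for $\a>0$ together with \eqref{eq-2.4-0424}. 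Adding the two bounds gives the claimed estimate.

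The only mild subtlety — and the one point requiring a line of care rather than a routine citation — is the regularity index in the correction term: $\bar\Pi_v w - \Pi_v w$ sits in $C^{\a+\b}$ by \eqref{eq-2.4-0424}, which is strictly better than the $C^\b$ regularity of $w$ itself, and then applying $\Pi_u$ with $\a>0$ preserves this index (since the first bullet of Bony's estimate costs nothing in regularity). So both pieces land in $C_TC^{\a+\b}$, matching the target space exactly, with no loss. There is no genuine obstacle here; the statement is essentially a bookkeeping combination of Lemmas \ref{lem-5.1-0301}, \ref{lem:bar} and \ref{lem-2.3-0405}, in the same spirit as the proof of the modified commutator lemma (Lemma \ref{lem modified commutator}) given just above.
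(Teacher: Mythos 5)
Your proposal is correct and follows essentially the same route as the paper: the same splitting of $R(u,v;w)$ into $\Pi_u(\bar\Pi_v w-\Pi_v w)$ plus the pure paraproduct associator, with Lemma \ref{lem-2.3-0405} handling the latter and Bony's estimate combined with \eqref{eq-2.4-0424} handling the former. The bookkeeping (including the observation that both pieces land in $C_TC^{\a+\b}$) matches the paper's argument.
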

\begin{proof}
Noting $\a\in (0,1)$ and then applying Lemmas \ref{lem-5.1-0301},    \ref{lem-2.3-0405} and \eqref{eq-2.4-0424} in Lemma \ref{lem:bar},
we deduce that 
\begin{align*}
& \|R(u, v; w)(t)\|_{C^{\a +\b}}   \\
 \leq & \|\big( \Pi_u ( \bar{\Pi}_v w) - \Pi_u ( {\Pi}_v w) \big)(t)\|_{C^{\a +\b}}
+
\|\big( \Pi_u ( {\Pi}_v w)  -\Pi_{uv }w \big)(t)\|_{C^{\a +\b}} \\
\lesssim & \|u(t)\|_{C^\a} \|\big(\bar{\Pi}_v w -{\Pi}_v w\big)(t)\|_{C^{\a+\b}} + \|u(t)\|_{C^\a}\|v(t)\|_{C^\a}\|w\|_{C^\b} \\
\lesssim & \|u\|_{C_TC^\a }\|v\|_{\mathcal{L}_T^\a}\|w\|_{C^\b}
\end{align*}
for all $t\in [0,T]$, which gives the result.
\end{proof}

In the end, we give some estimates for nonlinear functions of $\nabla u$.  These 
estimates are mainly used in Sections \ref{sec-2.2-0307} and \ref{sec-4-0406},
where we will take  $F=a, a' , g$ or $g'$ and choose a proper exponent $\a$ 
according to the situation; recall that we assume that $a, g \in C^3_b(\R)$.

\begin{lem} 
\label{lem-2.8-0424}
Let $F\in C_b^2(\R)$ and $\a \in (1,2)$. Then the following hold.
\\
{\rm (i)} For any 
$u \in C^\a$,  we have
\begin{align}\label{eq-2.11-0424}
 \|F(\nabla u)\|_{C^{\alpha-1}}\lesssim\|F\|_{C^1}(1+\|u\|_{C^\alpha}),
\end{align}
and for any $u, v \in C^\a$, we have the local Lipschitz estimate
\begin{align} \label{eq-2.12-0424}
 \|F(\nabla u) -F(\nabla v)\|_{C^{\alpha-1}}\lesssim\|F\|_{C^2}(1+\|u\|_{C^\alpha})\|u-v\|_{C^\a}.
\end{align}
{\rm (ii)} Let $0<\b \leq \a-1$. Then, for any 
$u \in \mathcal{L}_T^\a$,  we have
\begin{align} \label{eq-2.13-0424}
 \|F(\nabla u)\|_{\mathcal{L}_T^\b}\lesssim
T^{\frac{\a-\b-1}{2}} \|F\|_{C^1}(1+\|u\|_{\mathcal{L}_T^\a} ) +\|F(\nabla u(0))\|_{C^\b}.
\end{align}
For any $u, v \in \mathcal{L}_T^\a$ satisfying $u(0)=v(0)$, we have the local Lipschitz estimate
\begin{align}\label{eq-2.14-0424}
 \|F(\nabla u) -F(\nabla v)\|_{\mathcal{L}_T^\b }\lesssim
T^{\frac{\a-\b-1}{2}} \|F\|_{C^2}(1+\|u\|_{\mathcal{L}_T^\a})
\|u-v\|_{\mathcal{L}_T^\a}.
\end{align}
\end{lem}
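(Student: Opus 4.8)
The plan is to prove Lemma \ref{lem-2.8-0424} in the natural order: first the spatial estimates in part (i), then bootstrap to the parabolic estimates in part (ii) using Lemma \ref{lem-3.3-0318} and a time-regularity argument. For \eqref{eq-2.11-0424}, since $\a-1\in(0,1)$, the $C^{\a-1}$ norm of $F(\nabla u)$ is controlled by $\|F(\nabla u)\|_{L^\infty}$ plus the $(\a-1)$-H\"older seminorm. The sup norm is bounded by $\|F\|_{C^0}\le\|F\|_{C^1}$, and for the seminorm one writes $F(\nabla u(x))-F(\nabla u(y))$ and uses the mean value theorem together with $\|\nabla u\|_{C^{\a-1}}\lesssim\|u\|_{C^\a}$ (the continuity of $\nabla:C^\a\to C^{\a-1}$ already invoked in Lemma \ref{lem-3.3-0318}); this yields the factor $\|F\|_{C^1}(1+\|u\|_{C^\a})$, the $1$ absorbing the constant term. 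For the Lipschitz estimate \eqref{eq-2.12-0424} one writes $F(\nabla u)-F(\nabla v)=\int_0^1 F'(\nabla v+\theta\nabla(u-v))\,d\theta\;\nabla(u-v)$ and estimates the $C^{\a-1}$ norm of the product by Bony/Lemma \ref{lem-5.1-0301}-(ii) (note $(\a-1)+(\a-1)>0$ since $\a>3/2$ is not needed, only $\a>1$ gives $\a-1>0$ so the product is fine), controlling the $C^{\a-1}$ norm of the integrand by the chain-rule argument above applied to $F'\in C^1$, which brings in $\|F\|_{C^2}$ and $(1+\|u\|_{C^\a})$ (using $u(0)$-type bounds, or simply $\|v\|_{C^\a}$ controlled through $\|u-v\|_{C^\a}+\|u\|_{C^\a}$).

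For part (ii), recall $\mathcal L_T^\b=C_TC^\b\cap C_T^{\b/2}L^\infty$. The $C_TC^\b$ component of $\|F(\nabla u)\|$ follows from \eqref{eq-2.11-0424} applied at each fixed time $t$: $\|F(\nabla u(t))\|_{C^{\a-1}}\lesssim\|F\|_{C^1}(1+\|u\|_{\mathcal L_T^\a})$, and since $\b\le\a-1$ we have $\|\cdot\|_{C^\b}\le\|\cdot\|_{C^{\a-1}}$ by Lemma \ref{lem-5.1-0301}-(i); but to obtain the prefactor $T^{(\a-\b-1)/2}$ one instead interpolates the time increment. The key point is the $C_T^{\b/2}L^\infty$ seminorm: one must bound $\|F(\nabla u(t))-F(\nabla u(s))\|_{L^\infty}$. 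By the mean value theorem this is $\lesssim\|F\|_{C^1}\|\nabla u(t)-\nabla u(s)\|_{L^\infty}$, and then Lemma \ref{lem-3.3-0318} gives $\|\nabla u(t)-\nabla u(s)\|_{L^\infty}\lesssim\|u\|_{\mathcal L_T^\a}|t-s|^{(\a-1)/2}$; writing $|t-s|^{(\a-1)/2}=|t-s|^{\b/2}\cdot|t-s|^{(\a-1-\b)/2}\le|t-s|^{\b/2}T^{(\a-\b-1)/2}$ produces exactly the claimed bound, with the $\|F(\nabla u(0))\|_{C^\b}$ term needed to recover the value at $t=0$ (the $C_TC^\b$ norm must be anchored somewhere, and the $T$-power only controls increments). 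The Lipschitz estimate \eqref{eq-2.14-0424} proceeds identically: write the difference via the integral representation of $F(\nabla u)-F(\nabla v)$, estimate increments in $t$ by the mean value theorem in the form $\big(F(\nabla u(t))-F(\nabla v(t))\big)-\big(F(\nabla u(s))-F(\nabla v(s))\big)$, Taylor-expand to second order so that $\|F\|_{C^2}$ appears, apply Lemma \ref{lem-3.3-0318} to both $u-v$ and $u,v$ separately, and use the hypothesis $u(0)=v(0)$ to kill the boundary term, again extracting the $T$-power from the gap between the exponent $(\a-1)/2$ coming from interpolation and $\b/2$.

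I expect the main obstacle to be bookkeeping rather than any conceptual difficulty: making sure the $T$-power is genuinely $T^{(\a-\b-1)/2}$ and not something weaker requires using the sharp interpolation exponent $(\a-1)/2$ from Lemma \ref{lem-3.3-0318} (not a cruder one), and being careful that the $C_TC^\b$ part of the norm — which does not by itself carry a positive $T$-power — is correctly split off into the explicit $\|F(\nabla u(0))\|_{C^\b}$ term via the telescoping $F(\nabla u(t))=F(\nabla u(0))+\big(F(\nabla u(t))-F(\nabla u(0))\big)$ and then bounding the increment with the $T$-power as above. A secondary subtlety is handling the composition $F\circ\nabla u$ in H\"older spaces of order $\a-1\in(0,1)$: since this exponent is below $1$ one only needs the plain H\"older-difference estimate and the mean value theorem, so no paraproduct or Schauder machinery is required for part (i), and all the genuinely analytic input is imported from Lemma \ref{lem-3.3-0318} and Lemma \ref{lem-5.1-0301}.
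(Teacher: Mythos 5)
Your proposal is correct in outline, but it takes a genuinely different route from the paper. The paper's proof is essentially a citation: part (i) is Lemma 9-1 of \cite{BDH-19} (the ready-made composition estimate $\|F(w)\|_{C^{\a-1}}\lesssim\|F\|_{C^1}(1+\|w\|_{C^{\a-1}})$ and its Lipschitz analogue) applied to $w=\nabla u$ together with the continuity of $\nabla:C^\a\to C^{\a-1}$, and part (ii) is obtained by combining Lemma 9-2 of \cite{BDH-19} with Lemma \ref{lem-3.3-0318}, so the $T^{\frac{\a-\b-1}{2}}$ factor and the anchor term $\|F(\nabla u(0))\|_{C^\b}$ are imported wholesale. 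You instead reprove the composition estimates from scratch: the mean value theorem and the classical H\"older description of $C^{\a-1}$, $\a-1\in(0,1)$, for \eqref{eq-2.11-0424}; the integral representation of $F(\nabla u)-F(\nabla v)$ plus the product estimate of Lemma \ref{lem-5.1-0301}-(ii) for \eqref{eq-2.12-0424}; and, for (ii), the time-increment bound $\|\nabla u(t)-\nabla u(s)\|_{L^\infty}\lesssim\|u\|_{\mathcal{L}_T^\a}|t-s|^{\frac{\a-1}{2}}$ from Lemma \ref{lem-3.3-0318} together with the telescoping off of $F(\nabla u(0))$. What your version buys is self-containedness; what the paper's buys is brevity, since the exact $\mathcal{L}_T^\b$-form of the estimate is already on record in \cite{BDH-19}.

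Two steps in your sketch need to be made explicit. First, for the $C_TC^\b$ component of \eqref{eq-2.13-0424} the increment $w(t):=F(\nabla u(t))-F(\nabla u(0))$ must be measured in $C^\b$ with $\b>0$, so it is \emph{not} controlled by the $L^\infty$ time-increment bound ``as above'' alone; you need the spatial interpolation $\|w(t)\|_{C^\b}\lesssim\|w(t)\|_{L^\infty}^{1-\frac{\b}{\a-1}}\|w(t)\|_{C^{\a-1}}^{\frac{\b}{\a-1}}$ (Remark \ref{rem-2.1-0403} together with Lemma \ref{lem-5.1-0301}-(i)), where the $L^\infty$ factor carries $T^{\frac{\a-1}{2}}$ from Lemma \ref{lem-3.3-0318} and the $C^{\a-1}$ factor is bounded uniformly in $t$ by part (i); this yields exactly $T^{\frac{\a-1-\b}{2}}$, and the same interpolation applied to $G(t)=F(\nabla u(t))-F(\nabla v(t))$, which vanishes at $t=0$ since $u(0)=v(0)$, gives \eqref{eq-2.14-0424}. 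You do say ``one instead interpolates the time increment,'' so the idea is present, but as written the later reference back to the $L^\infty$ bound would not suffice. Second, your chain-rule arguments inevitably produce constants involving $\|v\|_{C^\a}$ (equivalently a term quadratic in $\|u-v\|$), rather than literally $(1+\|u\|_{C^\a})$; this is harmless for the way the local Lipschitz estimates are used on the balls $\mathcal{B}_T(\la)$, but the bound should be stated in that form.
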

\begin{proof} 
Using Lemma 9-$1$ of \cite{BDH-19}, we have for any 
$u\in C^\a$
\begin{align*}
& \|F(\nabla u)\|_{C^{\alpha-1}}\lesssim\|F\|_{C^1}(1+\|\nabla u\|_{C^{\alpha-1}} ). 
\end{align*}
Then, recalling the continuity of the operator $\nabla: C^\a \mapsto  C^{\a-1}$, we have the desired result \eqref{eq-2.11-0424}. 
We can show \eqref{eq-2.12-0424} by analogous arguments.

As for the results in {\rm (ii)}, they  can be shown by combining Lemma 9-$2$ of \cite{BDH-19} and Lemma \ref{lem-3.3-0318}. 
\end{proof}

\subsection{Derivation of fixed point problem}

Let $\alpha,\beta,\gamma$ be the exponents given in Subsection \ref{sec:2.1}. 
Let us now reformulate the equation \eqref{eq:Q-ab} into a fixed point problem
for the map $\Phi$ on ${\bf C}_{\a, \b, \ga}(X)$ defined as follows:
\begin{align} \label{eq-2.4-0304}
& \Phi(u,u') := (v,v')
\intertext{and}
& v'= \frac{g(\nabla u) - \big(a(\nabla u)-a(\nabla u_0^T)\big) u'}{a(\nabla u_0^T)}, \label{eq-2.7-0323}  \\
\label{eq-2.8-0323}& \mathcal{L}^0v= \Pi_{a(\nabla u_0^T)v'} \xi
+g'(\nabla u)\Pi(\nabla u^\sharp,\xi)-a'(\nabla u)\Pi(\nabla u^\sharp,\bar\Pi_{u'}\xi)\\
\notag&\qquad+\big(a(\nabla u)-a(\nabla u_0^T)\big)\De u^\sharp
+\zeta, 
\end{align}
where $\mathcal{L}^0 := \partial_t -a(\nabla u_0^T)\De$,  
$u_0^T:= e^{T\De}u_0$ and $e^{t\De}$ denotes the semigroup generated by $\De$ on $\mathbb{T}$. 
The term $\zeta =\zeta(u, u')\in C_TC^{\a+ \b -2}$ is
defined in \eqref{eq-2.8-0307} 
 below, which is considered as a remainder term in our analysis.  
Moreover, we assume $v(0) =u_0$. Note that $u'=v'$ is equivalent to $u'=\frac{g(\nabla u)}{a(\nabla u)}$.
The sum of the second and  third terms in the right hand side of 
\eqref{eq-2.8-0323} will be denoted by $\e_1(u,u')$, and the fourth term by $\e_2(u, u')$, respectively, and estimated in Lemma  \ref{lem:5.1} below.

The key point in  our analysis is to rewrite the SPDE \eqref{eq:Q-ab}
as in the form
\begin{align}\label{eq-2.10-0326}
\mathcal{L}^0 u=\big( a(\nabla u) -a(\nabla u_0^T) \big) \De u + g(\nabla u) \cdot \xi.
\end{align}
Although the leading part in the above equation is still the term 
$\big( a(\nabla u) -a(\nabla u_0^T) \big) \De u$, we can show that 
it is well-defined when $u$ is paracontrolled by $X$.
To make it clear, we generalize Lemma 3 of \cite{BDH-19} to the next result, which
is important for our goal. 

\begin{lem} \label{lem-2.4-0319}
For any $f\in \mathcal{L}_T^\b$ and $u_0\in C^\alpha$,  the following intertwining continuity estimate holds.
\begin{align*}
\|\mathcal{L}^0\big(\bar{\Pi}_f X \big) -\Pi_{a(\nabla u_0^T)f}(-\De X)\|_{C_TC^{\a +\b -2}} \lesssim 
\big(1 + T^{-\frac{\gamma -\a}{2}}\|u_0\|_{C^\a}\big)
 \|f\|_{\mathcal{L}_T^\b}\|X\|_{C^\a}.
\end{align*}
\end{lem}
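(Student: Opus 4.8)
The plan is to expand $\mathcal{L}^0(\bar\Pi_f X)$ using the definition $\mathcal{L}^0 = \partial_t - a(\nabla u_0^T)\De$ and the product/commutator structure of the modified paraproduct, then absorb every error term into $C_TC^{\a+\b-2}$ using the lemmas already established. Writing $h := a(\nabla u_0^T)$ for brevity, I would first compute
\[
\mathcal{L}^0(\bar\Pi_f X) = \partial_t(\bar\Pi_f X) - h\,\De(\bar\Pi_f X).
\]
For the time derivative, since $\bar\Pi$ is built from a time-integrated (smoothed-in-time) low-frequency factor, $\partial_t \bar\Pi_f X$ produces a term controlled by $f$ in $C_T^{\b/2}L^\infty \hookrightarrow \mathcal{L}_T^\b$ paired against $X\in C^\a$; the key point is that this contribution lands in $C_TC^{\a+\b-2}$ because of the loss of two derivatives one expects from $\De$ being the ``missing'' operator — this is exactly the mechanism that makes $\bar\Pi$ preferable to $\Pi$, and it is the content imported from Lemma 3 of \cite{BDH-19}. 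For the spatial term I would use the commutator identity $\De \bar\Pi_f X = \bar\Pi_f \De X + [\De,\bar\Pi_f]X$ from Lemma \ref{lem:37}, so that
\[
-h\,\De(\bar\Pi_f X) = \bar\Pi_{hf}(-\De X) + \big(\bar\Pi_f(h\,\De X) - h\,\bar\Pi_f \De X\big) - h\,[\De,\bar\Pi_f]X.
\]

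Next I would reconcile the two ``main'' terms that must cancel against $\Pi_{a(\nabla u_0^T)f}(-\De X) = \Pi_{hf}(-\De X)$. The difference $\bar\Pi_{hf}(-\De X) - \Pi_{hf}(-\De X)$ is estimated directly by Lemma \ref{lem:bar}-(ii): since $hf \in \mathcal{L}_T^\b$ with $\b\in(0,2)$ (here one uses $hf\in\mathcal{L}_T^\b$ from Lemma \ref{lem-2.8-0424}-(ii) applied to $a$, multiplied by $f$, and $\b<\a-1$), and $-\De X = Q\xi \in C^{\a-2}$, this difference is bounded in $C_TC^{\b + (\a-2)} = C_TC^{\a+\b-2}$ by $\|hf\|_{\mathcal{L}_T^\b}\|X\|_{C^\a}$, up to the $C^\a$-to-$C^{\a-2}$ conversion for $\De X$. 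The associativity error — the piece comparing $\bar\Pi_{hf}$ with the ``product-then-paraproduct'' object — is handled by Lemma \ref{lem-2.4-0405} (the modified-paraproduct associativity $R(u,v;w)$), which converts $\bar\Pi_f(h\cdot)$ composed with multiplication by $h$ into $\bar\Pi_{hf}$ plus a remainder in $C_TC^{\a+\b}\subset C_TC^{\a+\b-2}$, again with the right bilinear bound in $\|h\|$, $\|f\|$, $\|-\De X\|$. Finally, the commutator term $h\,[\De,\bar\Pi_f]X$ is estimated by \eqref{eq-4.2-0406} in Lemma \ref{lem:37}, giving $[\De,\bar\Pi_f]X \in C_TC^{\a+\b-2}$ with bound $\|f\|_{C_TC^\b}\|X\|_{C^\a}$; multiplying by $h\in C_TC^{\a-1}$ (which is $\ge 0$ in regularity and bounded by $\|u_0\|_{C^\a}$ via the semigroup bound on $u_0^T$ and Lemma \ref{lem-2.8-0424}-(i)) keeps it in $C_TC^{\a+\b-2}$ since $\a-1>0$; this is where the factor $T^{-(\ga-\a)/2}\|u_0\|_{C^\a}$ (or rather the Schauder-type blow-up of $\|\nabla u_0^T\|$) enters the stated bound. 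One should track carefully that $u_0^T = e^{T\De}u_0$ has the smoothing $\|\nabla u_0^T\|_{C^{\ga-1}} \lesssim T^{-(\ga-\a)/2}\|u_0\|_{C^\a}$, which is what produces the $T^{-(\ga-\a)/2}$ on the right-hand side, with the leading ``$1$'' coming from the terms where only $\|u_0\|_{C^\a}$-free norms of $h$ (namely its $L^\infty$ bound $\le C$) appear.

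Assembling, the displayed target quantity equals the sum of: the $\partial_t\bar\Pi_f X$ contribution, the $\bar\Pi$-vs-$\Pi$ difference, the associativity remainder, and the $\De$-commutator term, each of which I have placed in $C_TC^{\a+\b-2}$ with a bound of the form $(1 + T^{-(\ga-\a)/2}\|u_0\|_{C^\a})\|f\|_{\mathcal{L}_T^\b}\|X\|_{C^\a}$; summing the four bounds gives the lemma. The main obstacle, and the step I would spend the most care on, is the time-derivative term $\partial_t(\bar\Pi_f X)$: one must use the precise definition of the modified paraproduct (its time-mollification at scale $2^{-2j}$ on the $j$-th Littlewood–Paley block, matched to the heat semigroup) to see that $\partial_t$ hitting the low-frequency slot costs exactly two spatial derivatives' worth of regularity on the $X$ slot — this is the nontrivial input generalizing Lemma 3 of \cite{BDH-19} from the $a=a(u)$ setting on $\T^2$ to the $a=a(\nabla u)$, $u_0^T$-frozen-coefficient setting here, and it is why the frozen coefficient $a(\nabla u_0^T)$ rather than $a(\nabla u)$ appears inside $\mathcal{L}^0$. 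The remaining three terms are routine applications of the cited lemmas. $\hfill\square$
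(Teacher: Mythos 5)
Your overall strategy (expand $\mathcal{L}^0(\bar\Pi_f X)$, use the time-mollification built into $\bar\Pi$ to control $\partial_t\bar\Pi_f X$, commute $\De$ past $\bar\Pi_f$, compare $\bar\Pi$ with $\Pi$, and move the frozen coefficient $h=a(\nabla u_0^T)$ into the paraproduct subscript) is the right one, and it is essentially what the paper imports wholesale from Proposition 12 of \cite{BDH-19}; the paper's own proof consists of exactly two new ingredients, namely the generalized Lemma 11 of \cite{BDH-19}, $\|g\Pi_f v-\Pi_{gf}v\|_{C^{\a+\b-2}}\lesssim\|f\|_{C^\b}\|g\|_{C^{\ga-1}}\|v\|_{C^{\a-2}}$ (valid because $\a+\ga>3$), and the Schauder bound \eqref{eq-2.7-0321} giving $\|a(\nabla u_0^T)\|_{C^{\ga-1}}\lesssim 1+T^{-\frac{\ga-\a}{2}}\|u_0\|_{C^\a}$. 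However, your write-up has a genuine gap at precisely this point. First, your displayed decomposition is not an identity: with $h\bar\Pi_f\De X$ on the left, the error term must be $\bar\Pi_{hf}\De X-h\bar\Pi_f\De X$ (coefficient moved into the \emph{low}-frequency slot), whereas you wrote $\bar\Pi_f(h\De X)-h\bar\Pi_f\De X$ (coefficient moved into the \emph{high}-frequency slot), so the four pieces you estimate do not sum to the quantity in the lemma. Second, and more importantly, the exchange of the full product $h\cdot\bar\Pi_f\De X$ (or $h\,\Pi_f\De X$) for the paraproduct $\Pi_{hf}\De X$ is not covered by Lemma \ref{lem-2.4-0405}: that lemma treats $\Pi_h(\bar\Pi_f w)-\Pi_{hf}w$, a paraproduct in $h$, while here one must also control the resonant piece $\Pi(h,\bar\Pi_f\De X)$ and $\Pi_{\bar\Pi_f\De X}h$ with $\bar\Pi_f\De X\in C_TC^{\a-2}$. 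Since $2\a-3<0$, the regularity $h\in C^{\a-1}$ is \emph{not} enough for this resonant term; one needs $h\in C^{\ga-1}$ with $\ga-1+\a-2>0$, and this is exactly where the factor $T^{-\frac{\ga-\a}{2}}\|u_0\|_{C^\a}$ must enter.

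Relatedly, you attribute that factor to the term $h\,[\De,\bar\Pi_f]X$, but there it is not needed: $[\De,\bar\Pi_f]X\in C_TC^{\a+\b-2}$ and $h\in C^{\a-1}$ already multiply legitimately because $(\a-1)+(\a+\b-2)=2\a+\b-3>0$. Also, your claim that the associativity remainder lies in $C_TC^{\a+\b}$ is inconsistent with $w=\De X\in C^{\a-2}$ (the correct exponent involves $\a-2$, not $\b$). So to repair the argument you should (i) fix the algebra so the error term is $\bar\Pi_{hf}\De X-h\bar\Pi_f\De X$ (or go directly through $h\Pi_f\De X-\Pi_{hf}\De X$), and (ii) estimate it via the generalized Lemma 11 of \cite{BDH-19} with $g=a(\nabla u_0^T)\in C^{\ga-1}$, which is the step the paper highlights and which produces the stated right-hand side; your treatment of $\partial_t\bar\Pi_f X$, of $[\De,\bar\Pi_f]X$, and of $\bar\Pi_{hf}-\Pi_{hf}$ via Lemma \ref{lem:bar}-(ii) is otherwise in line with the paper's (cited) argument.
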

\begin{proof}
This estimate  can be shown analogously to Proposition 12 of \cite{BDH-19}.
One of the key points in the proof of  Proposition 12 of \cite{BDH-19}
is Lemma 11 of  \cite{BDH-19}. This lemma can be generalized to
our case, that is,   
for any $f\in C^{\b}, \ g\in C^{\ga -1}$ and $v\in C^{\a-2}$, we have
\begin{align*}
\|g\Pi_f v -\Pi_{gf}v\|_{C^{\a +\b -2} }\ \lesssim \|f\|_{C^\b} 
 \|g\|_{C^{\ga -1}}  \|v\|_{C^{\a-2}};
\end{align*} 
recall that $\a +\ga>3$.
On the other hand, by \eqref{eq-2.11-0424}  for $F=a$ in Lemma \ref{lem-2.8-0424} and the Schauder estimate on $u_0^T$, see \eqref{eq-3.1-430} in Lemma \ref{lem-2.3-0309} below,
we observe that 
\begin{align} \label{eq-2.7-0321}
\|a(\nabla u_0^T)\|_{C^{\gamma -1}} 
\lesssim  \|a\|_{C^1}(1+ \|u_0^T\|_{C^{\ga}}) 
\lesssim  \|a\|_{C^1} \big(1+ T^{-\frac{\ga -\a}{2}} \|u_0\|_{C^\a} \big).
\end{align}
We can now complete the proof of this lemma by mimicking the proof of Proposition 12 of \cite{BDH-19}.
\end{proof}
Note that by the definition of $X$, 
\begin{align}\label{eq-3.42-0420}
\|X\|_{C^\a}\lesssim \|\xi\|_{C^{\a-2}}
\end{align} holds,
see p.\ 40 of \cite{GIP-15} for example.   In particular, the estimate in Lemma
\ref{lem-2.4-0319} is given by $\|\xi\|_{C^{\a-2}}$ instead of $\|X\|_{C^\a}$,
see also Remark \ref{rem:3.1-A} below.

To derive the fixed point problem for the map $\Phi$, in \eqref{eq-2.10-0326}, we first expand
\begin{equation} \label{eq:2g}
g(\nabla u) \cdot \xi = \Pi_{g(\nabla u)} \xi + \Pi_\xi g(\nabla u) + \Pi(g(\nabla u), \xi ),
\end{equation}
where, as in p.\ 40 of \cite{GIP-15}, by Lemma 2.7 (Taylor expansion)
of \cite{GIP-15}, we rewrite the resonant term as follows:
\begin{align*}
\Pi(g(\nabla u), \xi ) = g'(\nabla u) \Pi(\nabla u, \xi )
+P_g(\nabla u,\xi),
\end{align*}
where $P_g$ was denoted by $\Pi_g$ in Lemma 2.7 of \cite{GIP-15}.  
Note that 
$P_g(\nabla u,\xi)  \in C_TC^{3\a-4}$ is a good term, where $3\a-4$ is obtained 
from $(\a-1)+(\a-1)+(\a-2)$.  However, since 
\begin{equation} \label{eq:5.6}
\nabla u= \nabla\bar\Pi_{u'} X +
\nabla u^\sharp =  \bar\Pi_{u'}\nabla X +
\nabla u^\sharp +R_1
\end{equation}
with $R_1 := [\nabla,\bar\Pi_{u'}]X \in C_TC^{\a +\b -1}$ from \eqref{eq:5.5} and the continuity result
\eqref{eq-4.1-0406} on the commutator $[\nabla,\bar\Pi]$ given in Lemma \ref{lem:37}, we have
\begin{align*}
\Pi(\nabla u,\xi)
& = \Pi(\bar\Pi_{u'}\nabla X,\xi)
 + \Pi(\nabla u^\sharp, \xi) +\Pi(R_1,\xi)\\
 & = u' \Pi(\nabla X,\xi) + \bar C(u', \nabla X,\xi) 
 + \Pi(\nabla u^\sharp, \xi) +\Pi(R_1,\xi),
 \end{align*}
 where $\bar{C}$ is the modified commutator defined in Lemma \ref{lem modified commutator}.
Note that although $\Pi(\nabla u^\sharp, \xi)$ has nice spatial regularity at each $t>0$, it  should be evaluated in some proper space
weighted in time  under some regularity assumption on $u^\sharp$ because of the explosive temporal property as $t\downarrow 0$, see Lemma \ref{lem:5.1} for details.

 By Lemma \ref{lem modified commutator},
 $\bar C(u', \nabla X, \xi)\in C_TC^{2\a +\b -3} $ is a nice function.  Therefore, showing that 
$\Pi(R_1,\xi) \in C_TC^{2\a +\b -3} $ is also nice by the regularity of $R_1$ and $2\a +\b -3>0$, we have
\begin{align*}
\Pi(g(\nabla u), \xi ) = g'(\nabla u) u' \Pi(\nabla X, \xi ) 
+g'(\nabla u) \Pi(\nabla u^\sharp, \xi ) + A_1,
\end{align*}
where
\begin{align}\label{eq:A1}
A_1 = P_g(\nabla u,\xi) + g'(\nabla u) \big\{\bar C(u', \nabla X,\xi) 
 +\Pi(R_1,\xi)\big\}  \in C_TC^{2\a +\b -3}.
\end{align} 
Hence, plugging the above equation into \eqref{eq:2g}, we have
\begin{align}\label{eq-2.13-0323}
g(\nabla u) \cdot \xi = \Pi_{g(\nabla u)} \xi + \Pi_\xi g(\nabla u)
+ g'(\nabla u) u' \Pi(\nabla X, \xi ) 
+g'(\nabla u) \Pi(\nabla u^\sharp, \xi ) + A_1.
\end{align} 

Next, for the first term in \eqref{eq-2.10-0326}, by \eqref{eq:5.5}, we have
\begin{align*} 
\big(a(\nabla u)- a(\nabla u_0^T) \big) \De u
= \big(a(\nabla u)- a(\nabla u_0^T)\big) 
\big( \De\bar\Pi_{u'}X+ \De u^\sharp\big).
\end{align*}
Then, by the continuity result \eqref{eq-4.2-0406} on the commutator $[\De,\bar\Pi]$
given in Lemma \ref{lem:37}, we have
\begin{align*}
\De\bar\Pi_{u'}X & = \bar\Pi_{u'} \De X +R_2 \\
& = -\bar\Pi_{u'} \xi +R_2
\end{align*}
with $R_2:= [\De,\bar\Pi_{u'}]X \in C_TC^{\a + \b -2}$
(note that $\bar\Pi_{u'} 1=0$ by definition), so that 
\begin{align*}
\big(a(\nabla u)- a(\nabla u_0^T) \big) \De u
= - \big(a(\nabla u)- a(\nabla u_0^T)\big) \bar\Pi_{u'}\xi  
+ \big(a(\nabla u)- a(\nabla u_0^T)\big)(\De u^\sharp+R_2),
\end{align*}
which, by  Lemma \ref{lem-2.4-0405},  is further rewritten as
\begin{align} \label{eq-2.11-0323}
\big(a(\nabla u)- a(\nabla u_0^T) \big) \De u
=& - \Pi_{(a(\nabla u)- a(\nabla u_0^T))u'}\xi 
- \Pi((a(\nabla u)- a(\nabla u_0^T)), \bar\Pi_{u'}\xi )  \\
& + \big(a(\nabla u)- a(\nabla u_0^T)\big) \De u^\sharp+ A_2, \notag
\end{align}
where
\begin{align}  \label{eq:A2}
A_2 = & -R(a(\nabla u)- a(\nabla u_0^T), u';\xi)  - \Pi_{\bar\Pi_{u'}\xi}
(a(\nabla u)- a(\nabla u_0^T))  \\
& \hskip 20mm  
+ (a(\nabla u)- a(\nabla u_0^T)) R_2
\in C_TC^{\a +\b -2}; \notag
\end{align}
see Lemma \ref{lem-2.4-0405} for the definition of the first term $R$ 
in $A_2$.

As we stated above, the term including $u^\sharp$ should be estimated in a weighted space with weight in time under the regularity assumption on  $u^\sharp$. Hence we can not consider it as a remainder such as $A_2$.

To derive the map $\Phi$, we first note that the resonant term
 $\Pi(a(\nabla u_0^T), \bar\Pi_{u'}\xi )$ in \eqref{eq-2.11-0323} is well-defined for $u_0 \in C^{\a}$. In fact,   by
 Lemma \ref{lem-5.1-0301}-{\rm (ii)}, \eqref{eq-2.7-0321} and  Lemma \ref{lem:bar}-{\rm (i)},  we have that 
\begin{align}\label{eq:2.18}
& \|\Pi(a(\nabla u_0^T), \bar\Pi_{u'}\xi )\|_{C_TC^{\a + \gamma -3}} \\ \lesssim & \|a(\nabla u_0^T)\|_{C^{\gamma-1}}   \|\bar\Pi_{u'}\xi  \|_{C_TC^{\alpha-2}}  \notag \\ 
\lesssim  & \|a\|_{C^1}(1+ T^{-\frac{\gamma-\a}{2}}\|  u_0\|_{C^\a} )\|u'\|_{\mathcal{L}_T^\b}\|\xi\|_{C^{\alpha-2}};
\notag
\end{align}
recall that  for any $\a + \gamma -3>0$. 
For the resonant term $\Pi(a(\nabla u), \bar\Pi_{u'}\xi )$  in \eqref{eq-2.11-0323}, 
by Lemma 2.7 of \cite{GIP-15} (Taylor expansion) and \eqref{eq:5.6},
\begin{align*}
\Pi(a(\nabla u), \bar\Pi_{u'}\xi )
& = a'(\nabla u) \Pi(\nabla u, \bar\Pi_{u'} \xi) + P_a(\nabla u, \bar\Pi_{u'}\xi)\\ 
& = a'(\nabla u) \Pi(\bar{\Pi}_{u'}\nabla X+
\nabla u^\sharp+R_1, \bar\Pi_{u'}\xi) + P_a(\nabla u, \bar\Pi_{u'}\xi).
\end{align*}
Lemma \ref{lem modified commutator} yields
\begin{align*}
\Pi(\bar{\Pi}_{u'}\nabla X,\bar\Pi_{u'}\xi)
&=u'\Pi(\nabla X,\bar\Pi_{u'}\xi)+\bar{C}(u',\nabla X,\bar\Pi_{u'}\xi)\\
&=(u')^2\Pi(\nabla X,\xi) 
+u'\bar{C}(u',\xi,\nabla X) 
+\bar{C}(u',\nabla X,\bar\Pi_{u'}\xi), 
\end{align*}
where the last two commutators are in $C_TC^{2\alpha+\beta-3}$.
Thus, 
\begin{align*}
\Pi(a(\nabla u), \bar\Pi_{u'}\xi )
= a'(\nabla u) (u')^2 \Pi(\nabla X, \xi ) 
+a'(\nabla u)\Pi(\nabla u^\sharp,\bar\Pi_{u'}\xi)
+ A_3,
\end{align*}
where
\begin{align}  \label{eq:A3}
A_3
= P_a(\nabla u, \bar\Pi_{u'}\xi) +  a'(\nabla u) \big\{
u'\bar{C}(u',\xi,\nabla X) + \Pi(R_1, \bar\Pi_{u'}\xi) + \bar{C}(u',\nabla X,\bar\Pi_{u'}\xi) 
 \big\}
\end{align}
belongs to $C_TC^{2\alpha+\beta-3}$, since $\bar\Pi_{u'}\xi\in C_TC^{\alpha-2}$ by Lemma \ref{lem:bar}-{\rm(i)},

Thus, noting \eqref{eq-2.11-0323}, we have
\begin{align}\label{eq-2.16-0323}
& \big(a(\nabla u)- a(\nabla u_0^T) \big) \De u \\
=&  - \Pi_{(a(\nabla u)- a(\nabla u_0^T))u'}\xi 
+ \Pi( a(\nabla u_0^T), \bar\Pi_{u'}\xi )  + \big(a(\nabla u)- a(\nabla u_0^T)\big) \De u^\sharp+ A_2, \notag \\
& -\big\{ a'(\nabla u) (u')^2 \Pi(\nabla X, \xi ) 
+a'(\nabla u)\Pi(\nabla u^\sharp,\bar\Pi_{u'}\xi)
+ A_3 \big\}.  \notag 
\end{align}

Thus, assuming that $\Pi(\nabla X,\xi)$ is nicely defined
 (see Lemma \ref{lem:2.9} below) and using \eqref{eq-2.13-0323}, \eqref{eq-2.16-0323},
we can rewrite the equation \eqref{eq-2.10-0326} and therefore \eqref{eq:Q-ab} as
\begin{align} \label{eq-2.17-0323}
\mathcal{L}^0u = & \Pi_{g(\nabla u)- (a(\nabla u)- a(\nabla u_0^T))u'}\xi
+g'(\nabla u)\Pi(\nabla u^\sharp,\xi)-a'(\nabla u)\Pi(\nabla u^\sharp,\bar\Pi_{u'}\xi)
\\& +\big(a(\nabla u)- a(\nabla u_0^T)\big) \De u^\sharp  +  \zeta,
\notag
\end{align}
where
\begin{align} \label{eq-2.8-0307}
 \zeta=\zeta(u, u')  = & \Pi_\xi g(\nabla u) +g'(\nabla u) u' \Pi(\nabla X, \xi ) 
 + A_1  + \Pi(a(\nabla u_0^T), \bar\Pi_{u'}\xi ) +A_2\\
& -\big\{ a'(\nabla u) (u')^2 \Pi(\nabla X, \xi ) + A_3 \big\}.  \notag
\end{align}
We have $\zeta\in C_TC^{\a+ \b-2}$.
This leads to \eqref{eq-2.8-0323}  with \eqref{eq-2.7-0323},
more precisely, \eqref{eq-2.17-0323} coincides with \eqref{eq-2.8-0323} if $u=v$ and $u'=v'$.

By the above analysis, it is clear that the remainder $\zeta$ 
is described explicitly in terms of multilinear maps of $\nabla u, u'$ 
or $C_b^2$ functions of $\nabla u$, whose  $C^{\a +\b-2}$-norm 
depends polynomially on the norm $\|(u, u')\|_{\a, \b, \gamma}$ and the data. 
Hereafter,  the data mean  $\|u_0\|_{C^\a}$, $\|X\|_{C^\a}$, $\|\xi\|_{C^{\a -2}}$, $\|\Pi(\nabla X, \xi)\|_{C^{2\a-3}}$ and  the norms relative to the coefficients $a$ and $g$.
Moreover, as a function defined on ${\bf C}_{\a, \b, \gamma}(X)$, 
we will show that $\zeta$ is locally Lipschitz continuous with a Lipschitz constant
depending polynomially on $\|(u, u')\|_{\a, \b, \gamma}$ and the data,
see Proposition \ref{lem:Lip-zeta} in Section \ref{sec-2.2-0307} 
for details. 

Throughout this paper, we  denote by $K_0$ a generic constant
 depending possibly on $\|u_0\|_{C^\a}$ and the norms relative to  
$a$ and $g$, but not on $X,$ $\xi$ and $\Pi(\nabla X, \xi)$. 
In addition, for simplicity,  we denote  by $K(\|(u,u')\|_{\a, \b, \gamma})$ 
a positive constant depending polynomially  on both  $\|(u,u')\|_{\a, \b, \gamma}$ and 
$K_0$ of the above type, and by  $K(\|(u_1,u_1')\|_{\a, \b, \gamma},$ 
$\|(u_2,u_2')\|_{\a, \b, \gamma})$ a positive constant depending  
polynomially on $\|(u_1,u_1')\|_{\a, \b, \gamma}$,  
$\|(u_2,u_2')\|_{\a, \b, \gamma}$ and $K_0$ when we study the Lipschitz property.
All of the constants may change from line to line.

\section{Solving fixed point problem} \label{sec-2.2-0307}

In this section, we solve the fixed point problem for the map $\Phi$
on ${\bf C}_{\a,\b,\ga}(X)$ defined by \eqref{eq-2.4-0304} and give
the proof of Theorem \ref{thm:main}.

\subsection{Formulation of the result and proof of Theorem \ref{thm:main}}

To solve our equation \eqref{eq:Q-ab} by the fixed point theorem, 
for $\la>0$, we set
$$
\mathcal{B}_T(\la) := \Big\{ (u,u')\in {\bf C}_{\a,\b,\ga}(X);\ 
u(0)=u_0,\ u'(0)= \frac{g(\nabla u_0)}{a(\nabla u_0)},\ 
\|(u,u')\|_{\a,\b,\ga} \le \la \Big\},
$$
where $u_0\in C^\a, \a\in (\frac43,\frac32)$, is given as in Theorem \ref{thm:main}.
Observe that the initial data $u(0)$ and $u'(0)$ are fixed in $\mathcal{B}_T(\la)$.

The main theorem of this section is the following.
\begin{thm} \label{thm-2.1-0304}
{\rm (i)} There exist a large enough $\lambda>0$ and a small enough $T>0$ such that
the map $\Phi$ defined by \eqref{eq-2.4-0304} is contractive
from $\mathcal{B}_T(\la)$ into itself.  In particular, $\Phi$ has a unique fixed point
on $[0,T]$ for $T>0$ small enough, which solves 
the paracontrolled SPDE \eqref{eq:Q-ab} locally in time. \\
{\rm (ii)} The map $\Phi$ depends continuously on the enhanced noise 
$\hat{\xi}:=(\xi, \Pi(\nabla X, \xi))\in C^{\a-2}\times C^{2\a -3}$ and its contractivity
on  $\mathcal{B}_T(\la)$ is  locally uniform in $\hat{\xi}$.  In particular, the unique fixed point 
of $\Phi$ in  $\mathcal{B}_T(\la)$ inherits the continuity in $\hat{\xi}$. 
\end{thm}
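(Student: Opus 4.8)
The plan is to invoke the Banach fixed point theorem for $\Phi$ on the complete metric space $\mathcal{B}_T(\la)$ with the distance induced by $\|\cdot\|_{\a,\b,\ga}$, choosing first $\la$ large and then $T=T(\la)$ small. It is convenient to view an element of $\mathcal{B}_T(\la)$ through the pair $(u',u^\sharp)$, which lives in the $X$-independent space $\mathcal{L}_T^\b\times\{w\in\mathcal{L}_T^\a:\ \sup_{0<t\le T}t^{(\ga-\a)/2}\|w(t)\|_{C^\ga}<\infty\}$; the reference distribution $X$ then enters $u=\bar\Pi_{u'}X+u^\sharp$ only through the explicit summand $\bar\Pi_{u'}X$, which will matter for part {\rm(ii)}. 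The whole estimate is a bookkeeping of powers of $T$: apart from the genuinely $T$-independent data contributions — essentially $\|v'(0)\|_{C^\b}=\|g(\nabla u_0)/a(\nabla u_0)\|_{C^\b}$, $\|u_0\|_{C^\a}$ and $\|u_0-\bar\Pi_{u'(0)}X\|_{C^\a}$, all lumped into a constant $K_0$ of the type fixed at the end of Section \ref{sec-2.1-0307} — every term produced below will carry a factor $T^\theta$ with $\theta>0$, and it is precisely to make all these exponents positive that the ranges $\a\in(\tfrac43,\tfrac32)$, $\b\in(\tfrac13,\a-1)$, $\ga\in(2\b+1,\a+\b)$ were imposed.

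For the self-mapping property, fix $(u,u')\in\mathcal{B}_T(\la)$ and set $(v,v')=\Phi(u,u')$. Since $c\le a\le C$, the explicit formula \eqref{eq-2.7-0323}, Lemma \ref{lem-2.8-0424}{\rm(ii)} (with $F=g$ and $F=a$) and the bound $\|\nabla u(t)-\nabla u_0^T\|_{C^\b}=O(T^\theta)$ — obtained by interpolating $u\in C_TC^\a$ against $u\in C_T^{\a/2}L^\infty$ using $u(0)=u_0$, and by the Schauder smoothing estimate \eqref{eq-3.1-430} on $u_0^T-u_0$ — give $\|v'\|_{\mathcal{L}_T^\b}\le\|v'(0)\|_{C^\b}+K(\la)T^\theta$, where one checks directly from \eqref{eq-2.7-0323} that $v'(0)=g(\nabla u_0)/a(\nabla u_0)$, as required for $\mathcal{B}_T(\la)$. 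Next, writing $v=\bar\Pi_{v'}X+v^\sharp$, the intertwining estimate of Lemma \ref{lem-2.4-0319} gives $\mathcal{L}^0(\bar\Pi_{v'}X)=\Pi_{a(\nabla u_0^T)v'}\xi+r$ with $\|r\|_{C_TC^{\a+\b-2}}\lesssim(1+T^{-(\ga-\a)/2}\|u_0\|_{C^\a})\|v'\|_{\mathcal{L}_T^\b}\|\xi\|_{C^{\a-2}}$; subtracting this from \eqref{eq-2.8-0323} leaves a linear parabolic equation $\mathcal{L}^0v^\sharp=\e_1(u,u')+\e_2(u,u')+\zeta(u,u')-r$ with $v^\sharp(0)=u_0-\bar\Pi_{v'(0)}X\in C^\a$, whose right-hand side lies in $C_TC^{\a+\b-2}$ and is controlled by Lemma \ref{lem:5.1} (for $\e_1,\e_2$ — trading a little spatial regularity of $a(\nabla u)-a(\nabla u_0^T)$ for its $L^\infty$-smallness $O(T^\theta)$, which is what produces the $T$-power in the $\e_2$ term) and by $\|\zeta\|_{C_TC^{\a+\b-2}}\le K(\|(u,u')\|_{\a,\b,\ga})$ from Section \ref{sec-2.1-0307}. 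Applying the Schauder estimate for $\mathcal{L}^0$ simultaneously in the $\mathcal{L}_T^\a$-norm and the weighted $\sup_t t^{(\ga-\a)/2}\|\cdot(t)\|_{C^\ga}$-norm, the $t^{-(\ga-\a)/2}$ singularities in $r$ and $\e_2$ are absorbed by the time-regularizing gain of that estimate, leaving a positive power of $T$ precisely because $\ga<\a+\b$; this yields $\|(v,v')\|_{\a,\b,\ga}\le K_0+K(\la)T^\theta$. Taking $\la\ge 2K_0$ and then $T$ so small that $K(\la)T^\theta\le\la/2$ gives $\Phi(\mathcal{B}_T(\la))\subset\mathcal{B}_T(\la)$.

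For the contraction, take $(u_1,u_1'),(u_2,u_2')\in\mathcal{B}_T(\la)$ and $(v_i,v_i')=\Phi(u_i,u_i')$. Since $v_1(0)=v_2(0)=u_0$ and $v_1'(0)=v_2'(0)$, hence $v_1^\sharp(0)=v_2^\sharp(0)$, all the $T$-independent contributions cancel in the differences $v_1'-v_2'$ and $v_1^\sharp-v_2^\sharp$; rerunning the estimates of the previous paragraph on these differences with the local Lipschitz versions — Lemma \ref{lem-2.8-0424}{\rm(ii)} for $v'$, Lemma \ref{lem:5.1} for $\e_1,\e_2$, Proposition \ref{lem:Lip-zeta} for $\zeta$, bilinearity of $\bar\Pi$ and Lemma \ref{lem-2.4-0319} for the $\bar\Pi_{v'}X$-term and for $r$ — gives $\|\Phi(u_1,u_1')-\Phi(u_2,u_2')\|_{\a,\b,\ga}\le K(\la)T^\theta\|(u_1,u_1')-(u_2,u_2')\|_{\a,\b,\ga}$. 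Shrinking $T$ so that $K(\la)T^\theta<1$ makes $\Phi$ a contraction; its unique fixed point $(u,u')$ satisfies \eqref{eq:5.5} with $u'=g(\nabla u)/a(\nabla u)$, and unwinding the derivation of $\Phi$ in Section \ref{sec-2.1-0307} shows $u$ solves \eqref{eq:Q-ab} in the paracontrolled sense, which is {\rm(i)}. The main obstacle is exactly this $T$-power bookkeeping, and in particular the terms $(a(\nabla u)-a(\nabla u_0^T))\De u^\sharp$ and the intertwining remainder $r$, whose singularity $t^{-(\ga-\a)/2}$ as $t\downarrow 0$ is recovered only after the Schauder step and only because $\ga<\a+\b$; on the technical side, the single heaviest input is the local Lipschitz continuity of $\zeta$, which is the content of Section \ref{sec-4-0406}.

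For {\rm(ii)}, write $\Phi=\Phi_{\hat\xi}$ with $\hat\xi=(\xi,\Pi(\nabla X,\xi))\in C^{\a-2}\times C^{2\a-3}$, and note that $X=(-\De)^{-1}Q\xi$ is a bounded linear, hence Lipschitz, function of $\xi$ by \eqref{eq-3.42-0420}. All the constants $K_0,K(\la)$ above depend on $\hat\xi$ only polynomially and monotonically through $\|\xi\|_{C^{\a-2}}$, $\|X\|_{C^\a}$ and $\|\Pi(\nabla X,\xi)\|_{C^{2\a-3}}$, so for $\hat\xi$ in any fixed bounded set one may take $\la$ and $T$ uniform, with a common contraction constant $\k<1$; this is the asserted local uniformity. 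Moreover, for fixed $(u,u')$ each term of $\Phi_{\hat\xi}(u,u')$ is a polynomial expression in $\hat\xi$ and $X$, so $\|\Phi_{\hat\xi_1}(u,u')-\Phi_{\hat\xi_2}(u,u')\|_{\a,\b,\ga}\le K(\la)\big(\|\xi_1-\xi_2\|_{C^{\a-2}}+\|\Pi(\nabla X_1,\xi_1)-\Pi(\nabla X_2,\xi_2)\|_{C^{2\a-3}}\big)$ on $\mathcal{B}_T(\la)$, the $X$-dependent summand being handled via $\bar\Pi_{v_1'}X_1-\bar\Pi_{v_2'}X_2=\bar\Pi_{v_1'}(X_1-X_2)+\bar\Pi_{v_1'-v_2'}X_2$ and Lemma \ref{lem:bar}. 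The standard perturbation bound for fixed points of uniform contractions then gives $\|(u_1,u_1')-(u_2,u_2')\|_{\a,\b,\ga}\le\tfrac{K(\la)}{1-\k}\,|\hat\xi_1-\hat\xi_2|$, so the fixed point inherits continuity in $\hat\xi$.
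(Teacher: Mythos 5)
Your proposal is correct and follows essentially the same route as the paper: bound $v'$ via Lemma \ref{lem:5.2}-type estimates, treat $v^\sharp$ by subtracting the intertwining term of Lemma \ref{lem-2.4-0319} and applying the weighted Schauder estimate (this is exactly Corollary \ref{cor:5.4}), feed in the growth/Lipschitz bounds of Lemmas \ref{lem:5.1}, \ref{lem:5.2} and Proposition \ref{lem:Lip-zeta} to get the factor $T^{(\a+\b-\ga)/2}$, and conclude by Banach's fixed point theorem; for (ii) you pass, as the paper does, to the noise-independent variables $(u',u^\sharp)$, prove local Lipschitz dependence of $\Phi$ on $\hat\xi$ and invoke the parameter-dependent contraction principle (the paper cites it as Proposition C.1.1 of \cite{DaZ}). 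The only cosmetic difference is that your unweighted bound on $\zeta$ should be stated in the time-weighted form of Proposition \ref{lem:Lip-zeta} (the term $\Pi(a(\nabla u_0^T),\bar\Pi_{u'}\xi)$ carries a $T^{-(\ga-\a)/2}$ factor absorbed only by the weight), which is precisely how you in fact use it in the Schauder step.
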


Once  Theorem \ref{thm-2.1-0304} is shown, one can prove Theorem \ref{thm:main}.

\begin{proof}[Proof of Theorem \ref{thm:main}]
Let the initial value $u_0\in C^\a$ of  \eqref{eq:Q-ab} be given.  The fixed point
$(u,u')$ of the map $\Phi$ satisfies \eqref{eq-2.7-0323} and \eqref{eq-2.8-0323}
with $v=u$ and $v'=u'$ so that, by the arguments in Section \ref{sec-2.1-0307},
we see that $u$ is a solution of SPDE \eqref{eq:Q-ab}  paracontrolled by $X$.
Conversely, if $u$ is a solution of \eqref{eq:Q-ab} paracontrolled by $X$,
it is a fixed point of $\Phi$.  Therefore, Theorem \ref{thm-2.1-0304}-(i)
shows the existence and uniqueness of a local solution of the SPDE \eqref{eq:Q-ab}.

For each enhanced noise $\hat{\xi}=(\xi, \Pi(\nabla X, \xi))\in C^{\a-2}\times C^{2\a -3}$,
let us denote the fixed point by $\mathcal{J}(\hat{\xi})$. 
Given a spatially mild noise $\eta$  such as the smeared noise $\xi^\e$
in Theorem \ref{thm:main} or Lemma \ref{lem:2.9} on $\mathbb{T}$, let us denote
 by $I(\eta)$ the unique solution 
of  the well-posed equation \eqref{eq:Q-ab} with the mild noise $\eta$ instead of $\xi$.
We have that $\mathcal{J}(\hat{\eta}):=\mathcal{J}(\eta, \Pi(\nabla Y, \eta))$ 
extends the map $I(\eta)$ in
the sense that $\mathcal{J}(\hat{\eta})=I(\eta)$ for any mild noise $\eta$, where 
$Y=(-\De)^{-1}Q\eta$. In particular, the convergence result in
Theorem \ref{thm:main} is obtained by the continuity of the map $\mathcal{J}$ 
shown in Theorem \ref{thm-2.1-0304}-(ii) and Lemma \ref{lem:2.9} in Section
\ref{sec:Renorm}.
\end{proof}

\subsection{Lipschitz estimates and Schauder estimates}

We will give the proof of Theorem \ref{thm-2.1-0304} at the end of this section.
Here, we prepare growth and Lipschitz estimates (Lemmas \ref{lem:5.1} and \ref{lem:5.2}
and Proposition \ref{lem:Lip-zeta}) and Schauder estimates (Lemmas \ref{lem-2.3-0309}
and \ref{lem:5.3} and Corollary \ref{cor:5.4}).

The next results are taken from A.7 and A.8 of \cite{GIP-15},   Corollary 2.7 of \cite{Ho} and  Lemma 6 of \cite{BDH-19}. Let us denote by $P_t=e^{t\De}, t\ge 0$ the semigroup generated 
by $\De$ on $\mathbb{T}$. 
\begin{lem} (Schauder estimates) \label{lem-2.3-0309}
Let  $\alpha \in \R$ and $t\in(0, T]$. Then the following hold.\\
{\rm (i)} (Lemma A.7 \cite{GIP-15} and  Lemma 6  \cite{BDH-19})  For $\beta \geq 0$ and $u\in C^\a$, 
\begin{align}\label{eq-3.1-430}
\|P_t u\|_{C^{\a + \b}} \lesssim_T t^{-\frac{\beta}{2}}\|u\|_{C^a},  \ t\in (0,T].
\end{align}
Here and in the sequel, ``$\lesssim_T$'' means the implicit multiplicative
constant in the right hand depends uniformly on $T$ for all $t\in (0,T]$.
For  $\beta \geq 0$,
\begin{align*}
\|P_t u\|_{C^\b} \lesssim_T t^{-\frac{\beta}{2}}\|u\|_{L^\infty},  \ t\in (0,T]
\end{align*}
and  conversely 
for   $\b<0$
\begin{align} \label{eq-3.3-430}
\|P_t u\|_{L^\infty} \lesssim_T t^{\frac{\b}{2}}\|u\|_{C^\b},  \ t\in (0,T].\end{align}
\noindent
{\rm (ii)}  (Corollary 2.7 \cite{Ho} and Lemma A.8 \cite{GIP-15})
Let  {\rm Id} denote the identity operator and $\beta \in [0,2]$. 
Then,  we have for all $t\geq 0$
\begin{align}
\|(P_t -{\rm Id}) u\|_{C^\alpha} \lesssim & t^{\frac{\beta}{2} }\|u \|_{C^{\alpha +\beta}}, \label{eq-3.4-430}
\\
\|(P_t -{\rm Id}) u\|_{L^\infty} \lesssim & t^{\frac{\beta}{2} }\|u \|_{C^\b}.                 \label{eq-3.5-430}
\end{align} 
{\rm (iii)}  (Lemma A.9 \cite{GIP-15})  For any $u\in C_TC^\a$, let us denote by $U(t)$ the convolution of $P_t$ and $u=u(\cdot)$, that is, 
$
U(t)=\int_0^t P_{t-s}u(s)ds.
$
\vspace{-3mm}
\begin{itemize}
 \setlength\itemsep{0em}
\item For all $\beta \in [0,1)$, we have 
$
t^\b\|U(t)\|_{C^{\a +2}} \lesssim  \sup_{s\in [0,t]}\big( s^\b\|u(s)\|_{C^{\a}}\big),\ t\in [0,T]. 
$
\item If $\a \in (-2, 0)$, then
$
\|U\|_{C_T^{\frac{\a +2}{2} }L^\infty}\lesssim \|u\|_{C_TC^\a}.
$
\end{itemize}
{\rm(iv)} (Lemma 6 \cite{BDH-19})  Let us denote by $Q_t$ the semigroup generated  by 
$\nabla{(b\nabla \cdot )}$ for some positive function $b\in C_b^2(\mathbb{T})$. Then 
the above statements in {\rm(i)}-{\rm(iii)} hold for $Q_t$ instead of $P_t$ with implicit multiplicative constants depending only on the $C^\a$-norm of $b$ and positive lower bound of $b$.
\end{lem}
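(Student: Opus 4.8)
The plan is to treat the statement for what it is, a compilation. Parts (i)--(iii) are the classical Schauder estimates for the heat semigroup $P_t=e^{t\De}$ on $\T$ and are contained in \cite{GIP-15,Ho,BCD}; part (iv) asserts that the same bounds persist when $P_t$ is replaced by the divergence-form semigroup $Q_t=e^{tL}$, $L=\na(b\na\cdot)$ with $b\in C_b^2(\T)$ and $\inf b>0$, and is Lemma 6 of \cite{BDH-19}. So the real work is to record how (i)--(iii) are obtained and then to explain why they transfer to $Q_t$ with constants that depend on $b$ only through $\|b\|_{C^2}$ and $\inf b$.

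For (i) and (ii): $P_t$ is the Fourier multiplier $k\mapsto e^{-t|k|^2}$, hence commutes with every Littlewood--Paley block, and on frequencies of size $2^j$ one has the elementary bounds $\|P_tv_j\|_{L^\infty}\lesssim e^{-ct2^{2j}}\|v_j\|_{L^\infty}$ and, for $\b\in[0,2]$, $\|(P_t-\mathrm{Id})v_j\|_{L^\infty}\lesssim(t2^{2j})^{\b/2}\|v_j\|_{L^\infty}$ (using $|e^{-x}-1|\lesssim x^{\b/2}$ on $x\ge0$ when $\b\le2$). Multiplying by the weight $2^{j(\a+\b)}$ and taking the supremum over $j$, estimate \eqref{eq-3.1-430} follows from $\sup_{\la\ge1}\la^{\b}e^{-ct\la^2}\lesssim t^{-\b/2}$, the uniform-in-$T$ constant arising only from the lowest-frequency block where $t^{\b/2}\le T^{\b/2}$, and \eqref{eq-3.4-430}, \eqref{eq-3.5-430} are then immediate; the two remaining inequalities in (i) are the same computation combined with the embeddings of Lemma~\ref{lem-5.1-0301}-(i). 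Part (iii) I would derive from the Duhamel representation $U(t)=\int_0^tP_{t-s}u(s)\,ds$: insert the smoothing estimate under the integral, split $\int_0^t=\int_0^{t/2}+\int_{t/2}^t$, bound $\|u(s)\|_{C^\a}\le s^{-\b}\sup_{r\le t}(r^\b\|u(r)\|_{C^\a})$, and appeal to the parabolic maximal-regularity-type estimate of \cite[Lemma A.9]{GIP-15} to handle the endpoint of the time integral (a naive pointwise-in-$s$ bound loses a logarithm, which is exactly why one uses the H\"older-scale Schauder estimate); the second convolution inequality uses \eqref{eq-3.3-430} in the same scheme with $\a+2\in(-2,0)$.

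For (iv) I would follow the route of \cite[Lemma 6]{BDH-19}: establish Aronson-type Gaussian bounds $|\partial_x^kq_t(x,y)|\lesssim t^{-(k+1)/2}e^{-c|x-y|^2/t}$ for $k\le2$ on the kernel $q_t$ of $Q_t$, with constants depending only on the ellipticity constants and $\|b\|_{C^2}$ (standard parabolic regularity theory; note $Q_t\mathbf{1}=\mathbf{1}$, so $\int_\T q_t(x,\cdot)=1$ and $\int_\T\partial_xq_t(x,\cdot)=0$, which is what makes the H\"older estimates go through), and then obtain (i)--(iii) for $Q_t$ by rerunning the proofs with $q_t$ in place of the heat kernel, the kernel-derivative bounds playing the role of the multiplier bounds. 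In one dimension there is also a cheap reduction: the change of scale $y=s(x)$ with $s'=1/b$ is a $C^3$-diffeomorphism of $\T$ onto a circle of length $\int_\T b^{-1}$, with norms controlled by $\|b\|_{C^2}$ and $\inf b$, and a direct computation turns $\na(b\na\cdot)$ into $b^{-1}\partial_y^2$; thus $Q_t$ is conjugate, via composition with $s$, to the semigroup of the uniformly parabolic equation $m\,\partial_tw=\partial_y^2w$ with bounded weight $m(y)=b(x(y))\in C^2$, for which (i)--(iii) are classical, and one then pulls back through $s$.

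The genuine obstacle sits entirely in (iv): for $P_t$ every constant is explicit Fourier analysis, whereas for $Q_t$ one must produce the \emph{quantitative} kernel bounds (equivalently, the quantitative Schauder estimates for $m\,\partial_tw=\partial_y^2w$) with the stated dependence on $\|b\|_{C^2}$ and $\inf b$ --- this is precisely the content of Lemma 6 of \cite{BDH-19}, which may simply be invoked. Everything else --- assembling the block estimates, the elementary time-integral bookkeeping for (iii), and chasing the embeddings of Lemma~\ref{lem-5.1-0301}-(i) --- is routine.
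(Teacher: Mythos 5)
The paper gives no proof of this lemma: it is stated as a compilation of Lemma A.7--A.9 of \cite{GIP-15}, Corollary 2.7 of \cite{Ho} and Lemma 6 of \cite{BDH-19}, and your proposal takes essentially the same route, invoking those references (in particular Lemma 6 of \cite{BDH-19} for part (iv)) while correctly sketching the standard Littlewood--Paley block and Duhamel arguments behind (i)--(iii). The sketches are sound; only note the small slip that for the second bullet of (iii) the relevant condition is $\a\in(-2,0)$ (so $\a+2\in(0,2)$), with \eqref{eq-3.3-430} applied at exponent $\b=\a<0$ and \eqref{eq-3.5-430} at exponent $\a+2$, not ``$\a+2\in(-2,0)$'' as written.
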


As we stated in Section \ref{sec-2.1-0307}, one task is to evaluate the terms including  $u^\sharp$. It is formulated in the following lemma.
\begin{lem}  \label{lem:5.1}
For ${\bf u} := (u,u') \in \mathcal{B}_T(\la)$, set
\begin{align*}
& \e_1({\bf u}) := \e_1(u,u') = 
g'(\nabla u)\Pi(\nabla u^\sharp,\xi)-a'(\nabla u)\Pi(\nabla u^\sharp,\bar\Pi_{u'}\xi),\\
& \e_2({\bf u}) := \e_2(u,u') = \big( a(\nabla u) - a(\nabla u_0^T) \big)
\De u^\sharp.
\end{align*}
Then we have the local growth estimates
\begin{align}
& \sup_{0<t\le T} t^{\frac{\ga-\a}2} \|\e_1({\bf u})(t)\|_{C^{2\a -3}}\lesssim K(\|{\bf u}\|_{\a,\b,\ga}) 
(1+\|\xi\|_{C^{\a -2}})\|\xi\|_{C^{\a -2}},    \label{eq-3.1-429}\\
& \sup_{0<t\le T} t^{\frac{\ga-\a}2} \|\e_2({\bf u})(t)\|_{C^{\ga-2}}
\lesssim  T^{\frac{\a -\b -1}{2}} K(\|{\bf u}\|_{\a,\b,\ga}) 
 (1+\|\xi\|_{C^{\a -2}})+K_0  \label{eq-3.2-429}
\end{align}
for two constants $K(\|{\bf u}\|_{\a,\b,\ga})$ and $K_0$.

Moreover, we have  the local Lipschitz estimates: for any ${\bf u}_1 := (u_1,u_1')$ and ${\bf u}_2 := (u_2,u_2') \in \mathcal{B}_T(\la)$,
\begin{align}\label{eq-5.7-0302}
& \sup_{0<t\le T} t^{\frac{\ga-\a}2} 
\|\big(\e_1({\bf u}_1) -\e_1({\bf u}_2 )\big)(t) \|_{C^{2\a -3} 
} \\
& \quad \lesssim K(\|{\bf u}_1 \|_{\a,\b,\ga}, \|{\bf u}_2\|_{\a,\b,\ga})
\| {\bf u}_1 - {\bf u}_2\|_{\a, \b, \gamma}
(1+\|\xi\|_{C^{\a -2}})^2 \|\xi\|_{C^{\a -2}},  \notag \\
& \sup_{0<t\le T} t^{\frac{\ga-\a}2}   
\|\big(\e_2({\bf u}_1) -\e_2({\bf u}_2) \big) (t) \|_{C^{\ga-2}} \label{eq-5.8-0302} \\ 
& \quad \lesssim T^{\frac{\a -\b -1}{2}}K(\|{\bf u}_1\|_{\a,\b,\ga}, 
\|{\bf u}_2\|_{\a,\b,\ga}) \| {\bf u}_1 - {\bf u}_2\|_{\a, \b, \gamma}
(1+\|\xi\|_{C^{\a -2}})^2
 \notag
\end{align}
hold for some constant  $K(\|{\bf u}_1\|_{\a,\b,\ga}, 
\|{\bf u}_2\|_{\a,\b,\ga})$.
\end{lem}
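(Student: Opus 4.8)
The plan is to estimate $\e_1(\mathbf u)$ and $\e_2(\mathbf u)$ term by term using the paraproduct and resonant estimates collected in Subsection 2.2, paying careful attention to the temporal weight $t^{(\ga-\a)/2}$ that controls the explosion of $u^\sharp$ near $t=0$. The only genuinely delicate points are the presence of $\nabla u^\sharp$ in the resonant products (which has regularity roughly $C^{\ga-1}$ with a blow-up rate $t^{-(\ga-\a)/2}$ as $t\downarrow 0$) and the factor $\De u^\sharp \in C^{\ga-2}$ in $\e_2$ (same blow-up rate). The two requirements $2\a+\b-3>0$ and $\ga+\b-2>0$ from Subsection 2.1 are exactly what make the relevant resonant products and the $C^{\ga-2}$-valued products well-defined.

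\emph{Growth estimates.} For \eqref{eq-3.1-429}, write $\nabla u^\sharp = \nabla u^\sharp$ as an element of $C^{\ga-1}$ with $\|\nabla u^\sharp(t)\|_{C^{\ga-1}} \lesssim t^{-(\ga-\a)/2}\|\mathbf u\|_{\a,\b,\ga}$ (by Lemma \ref{lem-3.3-0318} applied to $u^\sharp$ and the definition \eqref{eq:2-abc} of the norm). Since $\xi \in C^{\a-2}$ and $(\ga-1)+(\a-2) = \ga+\a-3 > 2\a-3 > 0$, Bony's estimate (Lemma \ref{lem-5.1-0301}-(ii)) gives $\Pi(\nabla u^\sharp,\xi) \in C^{\ga+\a-3} \subset C^{2\a-3}$ with the stated blow-up. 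Multiplying by $g'(\nabla u) \in C^{\a-1}$ — which is bounded in $C^{\a-1}$ by $K(\|\mathbf u\|_{\a,\b,\ga})$ via Lemma \ref{lem-2.8-0424}-(i) with $F=g'$ — and using the product estimate ($\a-1>0$, so the product lands in $C^{2\a-3}$) yields the first half. For the $a'$-term, first note $\bar\Pi_{u'}\xi \in C_TC^{\a-2}$ with norm $\lesssim \|u'\|_{\mathcal L_T^\b}\|\xi\|_{C^{\a-2}} \le K(\|\mathbf u\|_{\a,\b,\ga})\|\xi\|_{C^{\a-2}}$ by Lemma \ref{lem:bar}-(i) (using $\b>0$); then repeat the previous argument with $\xi$ replaced by $\bar\Pi_{u'}\xi$, giving the extra factor of $\|\xi\|_{C^{\a-2}}$, hence the $(1+\|\xi\|_{C^{\a-2}})\|\xi\|_{C^{\a-2}}$ structure. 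For \eqref{eq-3.2-429}, the key quantity is $\De u^\sharp \in C^{\ga-2}$ with $\|\De u^\sharp(t)\|_{C^{\ga-2}} \lesssim t^{-(\ga-\a)/2}\|\mathbf u\|_{\a,\b,\ga}$. The factor $a(\nabla u) - a(\nabla u_0^T)$ must be shown to be small in a $C^{\b'}$-norm with $\b' \ge 2-\ga$ and to carry a factor $T^{(\a-\b-1)/2}$; I would write $a(\nabla u)-a(\nabla u_0^T) = (a(\nabla u)-a(\nabla u(0))) + (a(\nabla u(0))-a(\nabla u_0^T))$, bound the first difference by Lemma \ref{lem-2.8-0424}-(ii) (the Lipschitz estimate \eqref{eq-2.14-0424}, which after comparing with the constant-in-time path $u(0)$ produces the gain $T^{(\a-\b-1)/2}$), and the second, $a(\nabla u(0)) - a(\nabla e^{T\De}u_0)$, by the Schauder smoothing estimate \eqref{eq-3.4-430}/\eqref{eq-3.5-430} giving a power of $T$; the resulting $C^{\b}$-norm pairs against $C^{\ga-2}$ since $\b+(\ga-2) = \ga+\b-2 > 0$, so the product is in $C^{\ga-2}$. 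The $K_0$ term arises from the piece involving only the initial data $a(\nabla u(0))-a(\nabla u_0^T)$ paired with $\De u^\sharp$ where one cannot extract the full $T$-power but only the $K_0$-dependence on $\|u_0\|_{C^\a}$; one keeps it as a separate additive constant.

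\emph{Lipschitz estimates.} For \eqref{eq-5.7-0302} and \eqref{eq-5.8-0302} I would use the standard telescoping: in each bilinear/trilinear term, replace one argument at a time by its $\mathbf u_1$- versus $\mathbf u_2$-version, bounding the difference of that argument in the appropriate norm by $\|\mathbf u_1-\mathbf u_2\|_{\a,\b,\ga}$ (using the Lipschitz parts of Lemma \ref{lem-2.8-0424} for $g'(\nabla u)$, $a'(\nabla u)$, $a(\nabla u)$, and using $\nabla u_1^\sharp - \nabla u_2^\sharp \in C^{\ga-1}$ with weight $t^{-(\ga-\a)/2}$ and $u_1' - u_2' \in \mathcal L_T^\b$ directly from the norm) and the remaining arguments in their own norms (uniformly bounded by $\la$, hence absorbed into the $K(\|\mathbf u_1\|,\|\mathbf u_2\|)$ constant). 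Because $\bar\Pi_{u'}\xi$ appears, differences of the form $\bar\Pi_{u_1'}\xi - \bar\Pi_{u_2'}\xi = \bar\Pi_{u_1'-u_2'}\xi$ contribute one factor $\|\xi\|_{C^{\a-2}}$ and one factor $\|\mathbf u_1-\mathbf u_2\|_{\a,\b,\ga}$, which together with the two existing $\xi$-factors explains the $(1+\|\xi\|_{C^{\a-2}})^2\|\xi\|_{C^{\a-2}}$ (resp. $(1+\|\xi\|_{C^{\a-2}})^2$) shape. The $T^{(\a-\b-1)/2}$ in \eqref{eq-5.8-0302} comes, as in the growth estimate, from the fact that $\mathbf u_1(0) = \mathbf u_2(0) = u_0$ so the differences $a(\nabla u_1)-a(\nabla u_2)$ vanish at $t=0$ and one applies the $T$-gaining Lipschitz estimate \eqref{eq-2.14-0424}.

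\emph{Main obstacle.} The routine part is the bookkeeping of exponents; the real subtlety — and the only place where the specific arithmetic of the interval $\a\in(\frac43,\frac32)$ and the constraints on $\b,\ga$ is used crucially — is the $\e_2$ estimate, where $\De u^\sharp$ has (negative) regularity $\ga-2$ and a singular $t\downarrow 0$ weight, and one must simultaneously (a) pair it against something positive enough in $C^\bullet$ so that $\ga+\b-2>0$ makes the product legitimate, and (b) extract the smallness factor $T^{(\a-\b-1)/2}$ by exploiting that $a(\nabla u)-a(\nabla u_0^T)$ is close to zero for small $T$, which forces the careful splitting through the common initial datum and the use of both the Lipschitz estimate \eqref{eq-2.14-0424} and the Schauder smoothing \eqref{eq-3.4-430}. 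Getting the correct interplay of the weight $t^{(\ga-\a)/2}$ with these $T$-powers, rather than the detailed constants, is where care is needed.
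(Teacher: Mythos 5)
Your proposal is correct and follows essentially the same route as the paper's proof: the weighted resonant/product estimates exploiting $\a+\ga-3>0$ and $\ga+\b-2>0$, the composition bounds of Lemma \ref{lem-2.8-0424}, the splitting of $a(\nabla u)-a(\nabla u_0^T)$ through the common initial datum with \eqref{eq-2.14-0424} supplying the $T^{\frac{\a-\b-1}{2}}$ gain (the initial-data piece giving the additive $K_0$), and telescoping via multilinearity for the Lipschitz bounds. The only cosmetic differences are that you keep $g'(\nabla u),a'(\nabla u)$ in $C^{\a-1}$ where the paper uses $C^\b$, and you should make explicit (as the paper does via \eqref{eq-3.5-0321}) that converting $\|u\|_{C^\a}$ into the paracontrolled norm costs a factor $1+\|\xi\|_{C^{\a-2}}$, which is where the $(1+\|\xi\|_{C^{\a-2}})$ factors in the stated bounds actually originate.
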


\begin{rem} \label{rem:3.1-A}
In the following in the proofs of Lemmas \ref{lem:5.1},  \ref{lem:5.2},
Corollary \ref{cor:5.4}, Proposition \ref{lem:Lip-zeta} and Lemmas \ref{lem-A0}-\ref{lem-A3},
we keep the norm $\|X\|_{C^\a}$ in estimates to make its origin clear, but eventually
by \eqref{eq-3.42-0420}, we bound it by $\|\xi\|_{C^{\a -2}}$.
\end{rem}

\begin{proof}[Proof of Lemma \ref{lem:5.1}]
To prove this lemma, let us  first note that for any ${\bf u}\in \mathcal{B}_T(\la)$, 
we have
\begin{align}\label{eq-3.5-0321}
\|u\|_{\mathcal{L}_T^{\a}} \lesssim \|u'\|_{\mathcal{L}_T^\b} 
\|X\|_{C^\a}
 +\|u^\sharp \|_{\mathcal{L}_T^\a} \leq (1+ \|X\|_{C^\a}) 
\|\bf{u}\|_{\a, \b, \ga},
\end{align}
which can be shown by 
Lemma 13 of \cite{BDH-19}, see also Lemma 2.10 of \cite{GP-17}. 
Indeed, the proof of  Lemma 13 of \cite{BDH-19} shows  
that $
\|\bar{\Pi}_fg \|_{\mathcal{L}_T^{\a}} \lesssim \|f\|_{\mathcal{L}_T^\b}\|g\|_{C^\a}
$
for any $ f\in \mathcal{L}_T^\b, \b\in (0,1)$ 
and $g\in C^\a,  \a\in (0,2)$. So we  obtain \eqref{eq-3.5-0321} by recalling \eqref{eq:5.5}.
By Lemma \ref{lem:bar}-{\rm (i)}, we easily have the weak estimate $\|u\|_{C_TC^{\a}} \lesssim  (1+ \|X\|_{C^\a}) \|{\bf u}\|_{\a, \b, \ga}$, which is enough for us
to prove the estimates on  $\e_1({\bf u})$. But, for $\e_2({\bf u})$ and Lemma \ref{lem:5.2} below, we have to use \eqref{eq-3.5-0321}. So we state it here.

We now prove \eqref{eq-3.1-429}.
Noting that $\alpha<\gamma$ and $0<\alpha+\gamma-3<\beta$, we have
\begin{align} \label{eq-3.4-0429}
&\|\e_1({\bf u})(t)\|_{C^{2\a-3}} 
\lesssim \|\e_1({\bf u})(t)\|_{C^{\alpha+\gamma-3}} \\
\lesssim &\|g'(\nabla u)(t)\|_{C^{\b}}
\| \Pi(\nabla u^\sharp,\xi)(t) \|_{C^{\a+\ga-3}}
+\|a'(\nabla u)(t)\|_{C^\beta}\|\Pi(\nabla u^\sharp,\bar\Pi_{u'}\xi)(t)\|_{C^{\alpha+\gamma-3}}   \notag\\
\lesssim &\|g'(\nabla u)(t)\|_{C^{\b}}
\| \nabla u^\sharp(t) \|_{C^{\ga-1}} \|\xi\|_{C^{\alpha-2}} \notag\\
&+\|a'(\nabla u)(t)\|_{C^\beta}
\|\nabla u^\sharp(t)\|_{C^{\gamma-1}}\|u'\|_{C_TC^\beta}\|\xi\|_{C^{\alpha-2}}
 \notag
\\ 
\lesssim & 
(\|g'(\nabla u)(t)\|_{C^{\b}} +\|a'(\nabla u)(t)\|_{C^\beta})
(1+\|u'\|_{C_TC^\beta})
\|  u^\sharp(t) \|_{C^{\ga}} \|\xi\|_{C^{\alpha-2}}, \notag
\end{align} 
where we have used Lemma \ref{lem-5.1-0301}-{\rm (i)} and {\rm (ii)} 
for the first and second  inequalities, respectively, and  
Lemma \ref{lem-5.1-0301}-{\rm (ii)} and Lemma
\ref{lem:bar} for the third inequality.

Using \eqref{eq-2.11-0424} 
in Lemma \ref{lem-2.8-0424} together with 
\eqref{eq-3.5-0321}, we have
\begin{align}\label{eq-3.4-0424}
\|g'(\nabla u)(t)\|_{C^\b} +\|a'(\nabla u)(t)\|_{C^\b} 
\lesssim  & (\| g'\|_{C^1}+\| a'\|_{C^1}) (1+\|u(t)\|_{C^{\b+1}}) 
 \\
\lesssim &   (\| g'\|_{C^1}+\| a'\|_{C^1}) \big(1+ (1+ \|X\|_{C^\a}) 
\|{\bf u}\|_{\a, \b, \ga}  \big)   \notag \\
\lesssim &   (\| g'\|_{C^1}+\| a'\|_{C^1}) (1+  
\|{\bf u}\|_{\a, \b, \ga}) (1+ \|X\|_{C^\a}).  \notag
\end{align}
Combining \eqref{eq-3.4-0429} with \eqref{eq-3.4-0424}, we have
\begin{align} \label{eq-3.6-0321}
&\|\e_1({\bf u})(t)\|_{C^{2\a-3}} 
\\ 
\notag
\lesssim&\big(\|g'\|_{C^1}+\|a'\|_{C^1})(1+\|{\bf u}\|_{\alpha,\beta,\gamma})^2 (1+\|X\|_{C^\alpha})
\|u^\sharp(t)\|_{C^\gamma}\|\xi\|_{C^{\alpha-2}},
\end{align}
and then by multiplying both sides of \eqref{eq-3.6-0321} 
by $t^{\frac{\ga-\a}2}$,
we obtain  \eqref{eq-3.1-429} with the constant
$$
K(\|{\bf u}\|_{\a,\b,\ga})
=\big(\|g'\|_{C^1}+\|a'\|_{C^1})(1+\|{\bf u}\|_{\alpha,\beta,\gamma})^2
\|{\bf u}\|_{\alpha,\beta,\gamma}.
$$
We remark all of the other constants $K$ and $K_0$ used in this paper can be obtained by the similar way. So the details for them will be omitted in the sequel.

The local Lipschitz estimate \eqref{eq-5.7-0302} of $\e_1$ is 
proved as follows. 
By the multilinearity of the resonant and the modified paraproduct,  
we have
\begin{align}\label{eq-5.11-0302}
& \|\big(\e_1({\bf u}_1) -\e_1({\bf u}_2 )\big)(t) \|_{C^{2\a -3}}  \\
\leq &  
\| \big( g'(\nabla u_1)   \Pi(\nabla ( u_1^\sharp - u_2^\sharp),\xi)  - a'(\nabla u_1) \Pi(\nabla( u_1^\sharp - u_2^\sharp),\bar{\Pi}_{u_1'}\xi) \big)(t) \|_{C^{2\a -3}}
 \notag \\
& + \|\big(\big( g'(\nabla u_1) -g'(\nabla u_2) \big)  \Pi(\nabla u_2^\sharp,\xi) -  \big( a'(\nabla u_1) -a'(\nabla u_2) \big)  
\Pi(\nabla u_2^\sharp,\bar{\Pi}_{u_1'}\xi)
\big) (t)\|_{C^{2\a -3}}\notag \\
& + \| \big( a'(\nabla u_2) \Pi(\nabla u_2^\sharp,\bar{\Pi}_{u_1'-u_2'}\xi)
\big) (t)\|_{C^{2\a -3}}\notag \\
=:&  I(t) +I\!I(t) + I\!I\!I(t). \notag
\end{align}
From \eqref{eq-3.6-0321}, it follows that 
\begin{align}\label{eq-5.12-0302}
I(t)\lesssim K(\|{\bf u}_1\|_{\a, \b, \ga}) \|\big(u_1^\sharp- u_2^\sharp\big)(t)\|_{C^{\ga}} (1+\|X\|_{C^\a})\|\xi\|_{C^{\a -2}}, \ t\in [0,T].
\end{align}
To evaluate the second term $I\!I(t)$, we first note that 
\begin{align}\label{eq-3.8-03038}
& \|\big(g'( \nabla u_1) -g'(\nabla u_2) \big)(t)\|_{C^\beta} +
 \|\big(a'( \nabla u_1) -a'(\nabla u_2) \big)(t)\|_{C^\beta} \\
\lesssim & (\|g'\|_{C^2} +\|a'\|_{C^2} )
\big(1+ (1+\|X\|_{C^\a})\|{\bf u}_1 \|_{\a, \b, \ga} \big)
(1+\|X\|_{C^\a}) \|{\bf u}_1- {\bf u}_2 \|_{\a, \b, \ga}
 \notag 
\\
\lesssim &
K(\|{\bf u}_1\|_{\a, \b, \ga})\|{\bf u}_1- {\bf u}_2\|_{\a, \b, \ga}
(1+\|X\|_{C^\a})^2, \  t\in [0,T], \notag
\end{align}
which follows from \eqref{eq-2.12-0424} and the similar arguments 
for \eqref{eq-3.4-0424}. 
We remark that \eqref{eq-3.4-0424} and \eqref{eq-3.8-03038}
also hold for $g, a$ instead of $g', a'$, which will be used in the
sequel.
Using this estimate and repeating essentially 
the same arguments for \eqref{eq-3.6-0321},  we have
\begin{align} \label{eq-3.16-503}
I\!I(t) 
\lesssim & K(\|{\bf u}_1\|_{\a, \b, \ga}, \|{\bf u}_2\|_{\a, \b, \ga})
\|{\bf u}_1-{\bf u}_2\|_{\a, \b, \ga}
\| u_2^\sharp(t)\|_{C^{\ga}} (1+\|X\|_{C^\a})^2 \|\xi\|_{C^{\a-2}} 
\end{align}
for all $t\in [0,T]$. 
Moreover, thanks to  \eqref{eq-3.4-0424}, 
the similar arguments give that 
\begin{align*}
I\!I\!I(t) \lesssim K(\|{ \bf u}_2\|_{\a, \b, \ga})\|{\bf u}_1-{\bf u}_2\|_{\a, \b, \ga}
\| u_2^\sharp(t)\|_{C^{\ga}} (1+\|X\|_{C^\a}) \|\xi\|_{C^{\a-2}}, \ t\in [0,T]. 
\end{align*}
Plugging this, \eqref{eq-5.12-0302} and \eqref{eq-3.16-503} into \eqref{eq-5.11-0302}, 
we  obtain \eqref{eq-5.7-0302}.

Let us now turn to the proofs of \eqref{eq-3.2-429} and \eqref{eq-5.8-0302}.
Since  $\beta >0> \gamma- 2$ and $\b + \ga -2>3\b-1>0$, Lemma \ref{lem-5.1-0301}-{\rm (ii)} implies for all $t\in [0,T]$,
\begin{align*}
\|\e_2({\bf u})(t)\|_{C^{\ga -2}}
\lesssim &
\|\e_2({\bf u})(t)\|_{C^{\b+ \ga -2}} \\
\lesssim & 
\|\big( a(\nabla u) - a(\nabla u_0^T) \big)(t) \|_{C^\b}
\|\De u^\sharp(t) \|_{C^{\gamma -2}}\\
\lesssim &
\| a(\nabla u) - a(\nabla u_0^T) \|_{C_TC^\b}
\| u^\sharp(t) \|_{C^{\gamma }}. \notag
\end{align*}
Then, we can obtain \eqref{eq-3.2-429} if there
exists a constant $K(\|\bf{u}\|_{\a, \b, \ga})$ such that 
\begin{align*}
 \| a(\nabla u) - a(\nabla u_0^T) \|_{C_TC^\b}\lesssim 
 T^{\frac{\a -\b -1}{2}}K(\|{\bf u}\|_{\a, \b, \ga}) (1+\|X\|_{C^\a})
+K_0.
\end{align*}
To obtain the important factor $T^{\frac{\a -\b -1}{2}}$,
we use Lemma \ref{lem-2.8-0424}-{\rm (ii)} and 
show  a stronger result: 
for any $u\in \mathcal{L}_T^\a$,  
\begin{align}\label{eq-5.8-0301}
\| a(\nabla u) - a(\nabla u_0^T) \|_{\mathcal{L}_T^\b} 
\lesssim T^{\frac{\a -\b -1}{2}}K(\|{\bf u}\|_{\a, \b, \ga}) (1+\|X\|_{C^\a})
+K_0,  
\end{align}
which will be frequently used in the sequel.

From \eqref{eq-2.14-0424} and 
$\beta < \alpha-1$, it follows that 
\begin{align}\label{eq-5.9-0301}
&  \| a(\nabla u) - a(\nabla u_0) \|_{\mathcal{L}_T^\b} \\
\lesssim & T^{\frac{\a -\b -1}{2}}\|a\|_{C^2} 
(1+ \| u_0\|_{C^{\alpha}})
\|u -u_0\|_{\mathcal{L}_T^{\alpha}}  \notag \\
\lesssim &T^{\frac{\a -\b -1}{2}}\|a\|_{C^2} 
(1+ \| u_0\|_{C^{\alpha}}) (\|{\bf u}\|_{\a, \b, \ga} + \|u_0\|_{C^\a}) (1+\|X\|_{C^\a}), \notag
\end{align}
where the strong estimate \eqref{eq-3.5-0321} has been used for 
the last inequality.
On the other hand,  applying \eqref{eq-2.11-0424} and then 
using \eqref{eq-3.4-430} in Lemma \ref{lem-2.3-0309}, we have 
\begin{align*}
\| a(\nabla u_0^T) - a( \nabla u_0)\|_{{C}^\b}
\lesssim \|a\|_{C^2}(1+\| u_0\|_{C^{\a}})
\| u_0^T- u_0\|_{C^{\a}} 
\lesssim  \|a\|_{C^2}(1+\|u_0\|_{C^\a})\|u_0\|_{C^\a}.
\end{align*}
Combining this with \eqref{eq-5.9-0301}, we obtain 
\eqref{eq-5.8-0301} and therefore complete the proof of \eqref{eq-3.2-429}.

Finally, let us show the local Lipschitz estimate \eqref{eq-5.8-0302} of $\e_2$.  
By repeating essentially the same arguments for \eqref{eq-5.9-0301}, we 
have 
\begin{align}\label{eq-3.13-0409}
 & \| a(\nabla u_1) - a(\nabla u_2) \|_{\mathcal{L}_T^\b}\\ 
\lesssim  &  T^{\frac{\a -\b -1}{2}}\|a\|_{C^2} 
(1+ \|{\bf u}_1 \|_{\a, \b, \ga})\|{\bf u}_1- {\bf u}_2 \|_{\a, \b, \ga}
(1+\|X\|_{C^\a})^2. \notag
\end{align}
The definition of $\e_2$  gives that
\begin{align*}
& \|\big(\e_2({\bf u}_1) -\e_2({\bf u}_2) \big) (t) \|_{C^{\ga-2}} \\
\leq & \|\big( \big( a(\nabla u_1) - a(\nabla u_2) \big) \De u_1^\sharp \big) (t)\|_{_{C^{\ga-2}}}
 +\|\big( \big( a(\nabla u_2) - a(\nabla u_0^T) \big) 
\De\big( u_1^\sharp -u_2^\sharp \big)\big)(t) \|_{_{C^{\ga-2}}}
\notag \\
\lesssim  & \|a(\nabla u_1) - a(\nabla u_2)\|_{\mathcal{L}_T^\beta}
\| u_1^\sharp(t)\|_{C^{\ga}}  
+\|  a(\nabla u_2) - a(\nabla u_0^T) \|_{\mathcal{L}_T^\b} 
 \|\big(u_1^\sharp -u_2^\sharp \big)(t) \|_{C^{\ga}} \notag
 \\
\lesssim  & T^{\frac{\alpha-\beta -1}{2}} K(\|{\bf u}_1\|_{\a, \b, \ga},  \|{\bf u}_2\|_{\a, \b, \ga}) (1+\|X\|_{C^\a})^2\\
& \times
\big(
\|{\bf u}_1 - {\bf u}_2 \|_{\a,\b, \ga}
\| u_1^\sharp(t)\|_{C^{\ga}}   +
 \|\big(u_1^\sharp -u_2^\sharp \big)(t) \|_{C^{\ga}} \big), \notag
\end{align*}
where  we have used \eqref{eq-3.13-0409} and \eqref{eq-5.8-0301}
with $u=u_2$ for the last inequality. Hence, we  complete the proof of \eqref{eq-5.8-0302} and then the proof of this lemma.
\end{proof}

The next lemma gives the local growth and local Lipschitz properties of the map $\Phi(u,u')$ in $u'$.
\begin{lem}  \label{lem:5.2}
For ${\bf u}_1 = (u_1,u_1')$ and  ${\bf u}_2 = (u_2,u_2')
\in \mathcal{B}_T(\la)$, set ${\bf v}_1 := \Phi({\bf u}_1 )= (v_1,v_1')$
and ${\bf v}_2 := \Phi({\bf u}_2)= (v_2,v_2')$. Then, we have the local growth estimate
\begin{align}\label{eq-2.13-0302}
& \| v_1'\|_{\mathcal{L}_T^\b} \lesssim 
 T^{\frac{\a-\b-1}2} K(\|{\bf u}_1\|_{\a, \b, \gamma})
 (1+\|\xi\|_{C^{\a -2}})  +K_0.
\end{align} 
Moreover, the following local Lipschitz estimate also holds. 
\begin{align}\label{eq-2.18-0304}
\|v_1'-v_2'\|_{\mathcal{L}_T^\b} \lesssim T^{\frac{\a-\b-1}2} 
K(\| {\bf u}_1\|_{\a,\b,\ga},  \| {\bf u}_2\|_{\a,\b,\ga})
\| {\bf u}_1 - {\bf u}_2\|_{\a,\b,\ga}(1+\|\xi\|_{C^{\a -2}})^2.
\end{align} 
\end{lem}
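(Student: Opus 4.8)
The plan is to work directly from the explicit formula \eqref{eq-2.7-0323}, $v_1'=\bigl(g(\na u_1)-(a(\na u_1)-a(\na u_0^T))u_1'\bigr)/a(\na u_0^T)$, exploiting that $\mathcal{L}_T^\b$ is a Banach algebra for $\b>0$: one has $\|fg\|_{\mathcal{L}_T^\b}\lesssim\|f\|_{\mathcal{L}_T^\b}\|g\|_{\mathcal{L}_T^\b}$, the spatial bound being Lemma \ref{lem-5.1-0301}-(ii) (product in $C^\b$, $\b>0$) and the time-Hölder bound following from a Leibniz splitting together with $\|g\|_{C_TL^\infty}\lesssim\|g\|_{C_TC^\b}$. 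Since $1/a\in C_b^3$ and $u_0^T=e^{T\De}u_0$ is time-independent with $\|\na u_0^T\|_{C^\b}\le\|\na u_0^T\|_{C^{\a-1}}\lesssim\|u_0\|_{C^\a}$, Lemma \ref{lem-2.8-0424}-(i) gives $\|1/a(\na u_0^T)\|_{\mathcal{L}_T^\b}=\|(1/a)(\na u_0^T)\|_{C^\b}\le K_0$. Thus \eqref{eq-2.13-0302} reduces to estimating $\|g(\na u_1)\|_{\mathcal{L}_T^\b}$ and $\|a(\na u_1)-a(\na u_0^T)\|_{\mathcal{L}_T^\b}\,\|u_1'\|_{\mathcal{L}_T^\b}$. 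For \eqref{eq-2.18-0304} I would first note that $v_1'$ and $v_2'$ have the \emph{same} denominator $a(\na u_0^T)$ (it depends only on $u_0$, not on ${\bf u}_1,{\bf u}_2$), so no difference-of-quotients term arises; writing the difference of the numerators as
\begin{align*}
\bigl(g(\na u_1)-g(\na u_2)\bigr)-\bigl(a(\na u_1)-a(\na u_2)\bigr)u_1'-\bigl(a(\na u_2)-a(\na u_0^T)\bigr)(u_1'-u_2')
\end{align*}
leaves three products to estimate.

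For the numerator pieces I would apply Lemma \ref{lem-2.8-0424}-(ii): \eqref{eq-2.13-0424} with $F=g$ bounds $\|g(\na u_1)\|_{\mathcal{L}_T^\b}$ by $T^{\frac{\a-\b-1}{2}}\|g\|_{C^1}(1+\|u_1\|_{\mathcal{L}_T^\a})+\|g(\na u_0)\|_{C^\b}$, while \eqref{eq-2.14-0424} with $F=g$ or $F=a$ and $u=u_1$, $v=u_2$ — applicable since all elements of $\mathcal{B}_T(\la)$ share the initial value $u_0$ — bounds $\|g(\na u_1)-g(\na u_2)\|_{\mathcal{L}_T^\b}$ and $\|a(\na u_1)-a(\na u_2)\|_{\mathcal{L}_T^\b}$ by $T^{\frac{\a-\b-1}{2}}K(\|{\bf u}_1\|_{\a,\b,\ga})\|u_1-u_2\|_{\mathcal{L}_T^\a}$. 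The factor $\|a(\na u_2)-a(\na u_0^T)\|_{\mathcal{L}_T^\b}$ is handled by \eqref{eq-5.8-0301}. It then remains to pass from $\mathcal{L}_T^\a$ norms to the data: $\|u_i\|_{\mathcal{L}_T^\a}\lesssim(1+\|X\|_{C^\a})\|{\bf u}_i\|_{\a,\b,\ga}$ by \eqref{eq-3.5-0321}, $\|u_1-u_2\|_{\mathcal{L}_T^\a}\lesssim(1+\|X\|_{C^\a})\|{\bf u}_1-{\bf u}_2\|_{\a,\b,\ga}$ by the same argument applied to $u_1-u_2=\bar\Pi_{u_1'-u_2'}X+(u_1^\sharp-u_2^\sharp)$, and $\|X\|_{C^\a}\lesssim\|\xi\|_{C^{\a-2}}$ by \eqref{eq-3.42-0420}; one bounds $\|u_1'\|_{\mathcal{L}_T^\b}\le\la$ and $\|u_1'-u_2'\|_{\mathcal{L}_T^\b}\le\|{\bf u}_1-{\bf u}_2\|_{\a,\b,\ga}$, and $\|g(\na u_0)\|_{C^\b}\le K_0$ by \eqref{eq-2.11-0424}. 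Collecting the terms (and noting $v_1'(0)=g(\na u_0)/a(\na u_0)$, which accounts for the standalone $K_0$) yields the claimed inequalities.

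The hard part is that every contribution depending on the radius $\la$ must come with the factor $T^{\frac{\a-\b-1}{2}}$, so that $\Phi$ becomes a contraction for small $T$ — only the fixed constant $K_0$, originating from the frozen initial value $v_1'(0)$, is allowed without it. This is automatic for the numerator terms, whose estimates \eqref{eq-2.13-0424} and \eqref{eq-2.14-0424} already carry $T^{\frac{\a-\b-1}{2}}$. The delicate term is $(a(\na u_1)-a(\na u_0^T))u_1'$: because $u_0^T=e^{T\De}u_0\neq u_0$, the factor $a(\na u_1(t))-a(\na u_0^T)$ does not vanish as $t\downarrow 0$, and the crude bound on $\|a(\na u_0)-a(\na u_0^T)\|_{C^\b}$ behind the $K_0$ in \eqref{eq-5.8-0301}, once multiplied by the $\la$-size quantity $\|u_1'\|_{\mathcal{L}_T^\b}$, would leave a $\la$-dependent term with no smallness in $T$. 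The remedy is to split $a(\na u_1)-a(\na u_0^T)=(a(\na u_1)-a(\na u_0))+(a(\na u_0)-a(\na u_0^T))$: the first piece already carries $T^{\frac{\a-\b-1}{2}}$ via \eqref{eq-2.14-0424} since $u_1(0)=u_0$, and the (time-independent) second piece is bounded, via \eqref{eq-2.12-0424} for $F=a$ in $C^\b$ with $\b\in(0,1)$, by $\|a\|_{C^2}(1+\|u_0\|_{C^\a})\|u_0-u_0^T\|_{C^{\b+1}}$, where the quantitative smoothing $\|(e^{T\De}-{\rm Id})u_0\|_{C^{\b+1}}\lesssim T^{\frac{\a-\b-1}{2}}\|u_0\|_{C^\a}$ — valid because $\b+1<\a$, by \eqref{eq-3.4-430} — supplies exactly the missing power of $T$. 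With this refinement the whole of $\|a(\na u_1)-a(\na u_0^T)\|_{\mathcal{L}_T^\b}$ acquires the factor $T^{\frac{\a-\b-1}{2}}$, and the remaining steps are routine bookkeeping with the algebra property.
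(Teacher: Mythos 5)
Your proposal is correct and follows the same overall route as the paper: the explicit quotient formula \eqref{eq-2.7-0323}, the algebra property of $\mathcal{L}_T^\b$, the bound $\|1/a(\nabla u_0^T)\|_{C^\b}\le K_0$ (the paper gets this via \eqref{eq-3.17-0418}), the composition estimates \eqref{eq-2.13-0424}--\eqref{eq-2.14-0424} for the numerator, the bound \eqref{eq-5.8-0301} for $a(\nabla u)-a(\nabla u_0^T)$, and the reductions \eqref{eq-3.5-0321}, \eqref{eq-3.42-0420} to express everything through $\|{\bf u}_i\|_{\a,\b,\ga}$ and $\|\xi\|_{C^{\a-2}}$; your three-term splitting of the numerator difference is equivalent to the paper's $b_1u_1'-b_2u_2'$ decomposition handled with \eqref{eq-5.8-0301} and \eqref{eq-3.13-0409}. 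Where you go beyond the paper is the treatment of the term $(a(\nabla u_1)-a(\nabla u_0^T))u_1'$: the paper simply cites \eqref{eq-5.8-0301}, whose statement carries an additive $K_0$ with no power of $T$, and multiplying that $K_0$ by $\|u_1'\|_{\mathcal{L}_T^\b}$ (resp.\ $\|u_1'-u_2'\|_{\mathcal{L}_T^\b}$) produces a $\la$-dependent (resp.\ difference-proportional) contribution without the factor $T^{\frac{\a-\b-1}2}$, which does not literally match the stated forms of \eqref{eq-2.13-0302} and, more seriously, of \eqref{eq-2.18-0304}, where no standalone constant is allowed and the pure $T$-prefactor is what the contraction argument later relies on. Your refinement—splitting off $a(\nabla u_0)-a(\nabla u_0^T)$ and estimating it in $C^\b$ through the Lipschitz bound at level $\b+1$ together with $\|(e^{T\De}-\mathrm{Id})u_0\|_{C^{\b+1}}\lesssim T^{\frac{\a-\b-1}2}\|u_0\|_{C^\a}$ from \eqref{eq-3.4-430}, valid precisely because $\b+1<\a$—upgrades \eqref{eq-5.8-0301} so that the initial-data part also carries $T^{\frac{\a-\b-1}2}$, and with it the claimed estimates follow exactly as you say. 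This is the right way to read the lemma: the paper leaves this smoothing step implicit, and your argument supplies it; the rest of your bookkeeping (use of \eqref{eq-2.14-0424} with $u_1(0)=u_2(0)=u_0$, the common denominator $a(\nabla u_0^T)$, the origin of the standalone $K_0$ in the $g(\nabla u_0)$ term) is accurate.
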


\begin{proof}
The proof is similar to that of Lemma 5 of \cite{BDH-19}.
By \eqref{eq-2.13-0424} in Lemma \ref{lem-2.8-0424} and \eqref{eq-3.5-0321}, 
we have that 
\begin{align} \label{eq-3.13-0403}
\|g(\nabla u_1)\|_{\mathcal{L}_T^\b} 
 \lesssim  &   T^{\frac{\a-\b-1}2}\|g\|_{C^1} 
(1+ \| u_1\|_{\mathcal{L}_T^\a} ) +\|g\|_{C^1}(1+\| u_0\|_{C^\a})
\\
 \lesssim  &   T^{\frac{\a-\b-1}2}\|g\|_{C^1} 
(1+ \|{\bf u}_1\|_{\a, \b, \ga} )(1+\|X\|_{C^\a})
 +\|g\|_{C^1}(1+\| u_0\|_{C^\a}).\notag
\end{align}
Recalling the assumption on $a$:
$a\in C_b^3(\R)$ and $0<c\leq a(v)\leq C$, we have 
$\frac{1}{a(\nabla u_0^T)}\in C^\b$. More precisely, \eqref{eq-2.11-0424} and \eqref{eq-3.1-430} in Lemma \ref{lem-2.3-0309} yield that 
\begin{align}\label{eq-3.17-0418}
\left\|\frac{1}{a(\nabla u_0^T)}\right\|_{C^\b} \lesssim  &
c^{-2}\| a(\nabla u_0^T) \|_{C^{\b}} 
\lesssim  c^{-2}\| a\|_{C^1} 
(1+\|u_0^T\|_{C^\a} )  
\\
\lesssim & c^{-2}\| a\|_{C^1} 
(1+\|u_0\|_{C^\a} ). \notag 
\end{align}
Then, by the definition of  $\Phi$ and Lemma \ref{lem-5.1-0301}, 
it follows  that 
\begin{align*}
\| v_1'\|_{\mathcal{L}_T^\b} \lesssim &
\left\|\frac{1}{a(\nabla u_0^T)}\right\|_{C^\b} 
\left\{ 
\|g(\nabla u_1)\|_{\mathcal{L}_T^\b} + \|a(\nabla u_1) -a(\nabla u_0^T)
\|_{\mathcal{L}_T^\b} \|u_1' \|_{\mathcal{L}_T^\b}
\right\},  
\end{align*}
which gives  \eqref{eq-2.13-0302} by \eqref{eq-5.8-0301} with $u=u_1$, \eqref{eq-3.13-0403} and \eqref{eq-3.17-0418}.

Next, we give the proof of \eqref{eq-2.18-0304}.
It is enough for us to show the local Lipschitz estimates for
$g(\nabla u)$ and $\big(a(\nabla u)-a(\nabla u_0^T)\big) u'$, respectively.
For the term $g(\nabla u)$, thanks to \eqref{eq-2.14-0424} in Lemma \ref{lem-2.8-0424}, 
we see that
the local Lipschitz estimate  \eqref{eq-3.13-0409} holds also for $g$ instead of $a$.
To deal with $\big(a(\nabla u)-a(\nabla u_0^T)\big) u'$, we set $b_i=a(\nabla u_i)-a(\nabla u_0^T)$ for $i=1,2$. Then it is enough to
estimate $\|b_1u_1'-b_2u_2'\|_{\mathcal{L}_T^\b}$.
Since 
$\|fg\|_{\mathcal{L}_T^\b}\lesssim\|f\|_{\mathcal{L}_T^\b}\|g\|_{\mathcal{L}_T^\b}$ holds for any  $f, g \in \mathcal{L}_T^\b$, 
we have 
\begin{align*}
 \|b_1u_1'-b_2u_2'\|_{\mathcal{L}_T^\b} 
\le & \|b_1\|_{\mathcal{L}_T^\b}\|u_1'-u_2'\|_{\mathcal{L}_T^\b}
+ \|b_1-b_2\|_{\mathcal{L}_T^\b} \|u_2'\|_{\mathcal{L}_T^\b}  \\
\lesssim & T^{\frac{\a-\b-1}2} 
K(\|{\bf u}_1\|_{\alpha,\beta,\gamma}, \|{\bf u}_2\|_{\alpha,\beta,\gamma})  
\|{\bf u}_1- {\bf u}_2 \|_{\a, \b,\ga} (1+\|X\|_{C^\a})^2,
\end{align*}
where we have used \eqref{eq-5.8-0301} and \eqref{eq-3.13-0409}.
Therefore, we have \eqref{eq-2.18-0304}.
\end{proof}

Now we turn to the study of the property of $v^\sharp$. To do it, 
according to our observation in Section \ref{sec-2.1-0307},
 we give the following
Schauder estimate as preparation. We will take $b=a(\nabla u_0^T)$
in the next lemma to show Corollary \ref{cor:5.4} for $v^\sharp$.
\begin{lem}  \label{lem:5.3}  (Schauder estimate)
Let an initial value $f_0\in C^\a$ and a function $b\in C_b^2(\mathbb{T})$, which is
uniformly positive: $b\geq c>0$, be given.  Let $\phi_1 \in C((0,T], C^{\ga-2})$
satisfying
$$
\sup_{0<t \le T} t^{\frac{\ga-\a}2} \|\phi_1(t)\|_{C^{\ga-2}} < \infty,
$$
and $\phi_2 \in C((0,T], C^{\a+\b-2})$ satisfying
\begin{align} \label{eq-3.12-0324}
\sup_{0<t \le T} t^{\frac{\ga -\a}2} 
\|\phi_2(t)\|_{C^{\a+ \b-2}} < \infty,
\end{align}
be given.  Let $f$ be the solution of the parabolic equation
\begin{align}\label{eq-2.21-0309}
\partial_t f - b\De f = \phi_1+\phi_2, \quad f(0) =f_0.
\end{align}
Then, choosing $T>0$ small enough, we have
\begin{align} \label{eq-3.261-430}
\sup_{0<t \le T} & t^{\frac{\ga-\a}2} \|f(t)\|_{C^\ga} +\|f\|_{\mathcal{L}_T^\a}\\
& \lesssim \|f_0\|_{C^\a}
+ \sup_{0<t \le T} t^{\frac{\ga-\a}2} \|\phi_1(t)\|_{C^{\ga-2}} 
+ T^{\frac{\a +\b -\gamma}2}\sup_{0<t \le T} t^{\frac{\ga -\a}2} 
\|\phi_2(t)\|_{C^{\a+\b-2}}   \notag
\end{align}
with an implicit multiplicative positive constant in the right hand side depending 
only on the $C^\a$-norm of $b$ and the lower bound  $c>0$ of $b$.
\end{lem}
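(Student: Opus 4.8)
The plan is to use the Duhamel (mild) formulation for the operator $\partial_t - b\De$ together with the Schauder estimates of Lemma \ref{lem-2.3-0309}, in particular part (iv) which transfers all the smoothing estimates for $P_t = e^{t\De}$ to the semigroup $Q_t$ generated by $b\De$ (or more precisely $\nabla(b\nabla\cdot)$; here $b\De$ is treated the same way, with implicit constants depending only on $\|b\|_{C^\a}$ and $c$). Write $f(t) = Q_t f_0 + \int_0^t Q_{t-s}\phi_1(s)\,ds + \int_0^t Q_{t-s}\phi_2(s)\,ds =: f^{(0)}(t) + f^{(1)}(t) + f^{(2)}(t)$, and estimate the three pieces separately in the three norms appearing on the left of \eqref{eq-3.261-430}, namely the weighted $C^\ga$-norm $\sup_t t^{(\ga-\a)/2}\|f(t)\|_{C^\ga}$, the $C_TC^\a$-norm, and the $C_T^{\a/2}L^\infty$-seminorm.

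For the homogeneous term $f^{(0)}$: by \eqref{eq-3.1-430} with $\b = \ga-\a \ge 0$ we get $t^{(\ga-\a)/2}\|Q_tf_0\|_{C^\ga}\lesssim \|f_0\|_{C^\a}$; the bound $\|Q_tf_0\|_{C^\a}\lesssim\|f_0\|_{C^\a}$ is immediate; and the $C_T^{\a/2}L^\infty$-seminorm is controlled by writing $Q_tf_0 - Q_sf_0 = (Q_{t-s}-\mathrm{Id})Q_sf_0$ and applying \eqref{eq-3.5-430} (or \eqref{eq-3.4-430}) with exponent $\a$, giving $\|Q_tf_0-Q_sf_0\|_{L^\infty}\lesssim |t-s|^{\a/2}\|f_0\|_{C^\a}$. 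For the $\phi_1$-term: since $\ga-2 \in (-2,0)$ and $\sup_s s^{(\ga-\a)/2}\|\phi_1(s)\|_{C^{\ga-2}}<\infty$, the weighted $C^\ga$-bound on $f^{(1)}$ follows from the first bullet of Lemma \ref{lem-2.3-0309}(iii) with $\b = (\ga-\a)/2 \in [0,1)$ applied to $u(s)=\phi_1(s)$, $\a$ there being $\ga-2$ (so $\a+2 = \ga$), yielding $t^{(\ga-\a)/2}\|f^{(1)}(t)\|_{C^\ga}\lesssim \sup_s s^{(\ga-\a)/2}\|\phi_1(s)\|_{C^{\ga-2}}$; the $C_TC^\a$-bound on $f^{(1)}$ follows similarly by gaining only $\a-(\ga-2) = \a-\ga+2 < 2$ derivatives via \eqref{eq-3.1-430} and integrating the resulting $(t-s)^{(\ga-\a)/2-1}s^{-(\ga-\a)/2}$ singularity (a Beta-function integral, finite since $(\ga-\a)/2<1$); and the $C_T^{\a/2}L^\infty$-seminorm of $f^{(1)}$ is obtained by the standard splitting of $f^{(1)}(t)-f^{(1)}(s)$ into $\int_s^t Q_{t-r}\phi_1(r)\,dr$ plus $(Q_{t-s}-\mathrm{Id})f^{(1)}(s)$ and using \eqref{eq-3.5-430} and \eqref{eq-3.3-430}. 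For the $\phi_2$-term the computation is identical with $\ga-2$ replaced by $\a+\b-2$; since $\a+\b-2 < \ga-2$ (because $\ga > 2\b+1 > \a+\b$ fails — wait, rather $\ga < \a+\b$ gives $\ga - 2 < \a+\b-2$, so in fact $\phi_2$ is the rougher of the two, consistent with the extra factor below), one gains $\ga-(\a+\b-2) = \ga-\a-\b+2$ derivatives, and the key point is that the corresponding time integral produces a gain of $T^{(\a+\b-\ga)/2}$: indeed $\int_0^t (t-r)^{(\ga-(\a+\b))/2 - 1} r^{-(\ga-\a)/2}\,dr \lesssim t^{(\a+\b-\ga)/2 + (\a-\ga)/2 \cdot(-1) \cdots}$ — more transparently, one estimates $t^{(\ga-\a)/2}\|f^{(2)}(t)\|_{C^\ga} \lesssim T^{(\a+\b-\ga)/2}\sup_s s^{(\ga-\a)/2}\|\phi_2(s)\|_{C^{\a+\b-2}}$ by bounding $t^{(\ga-\a)/2}\int_0^t (t-s)^{-(\ga-(\a+\b-2)-... )}$ carefully, the powers of $t$ combining to a nonnegative power of $T$ precisely because $\a+\b-\ga$ could be negative but $(\ga-\a)/2 + (\a+\b-\ga)/2 = \b/2 \ge 0$ keeps everything integrable. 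Finally, choosing $T$ small is needed so that the passage from estimates for $e^{t b(x_0)\De}$ (constant-coefficient, via $P_t$) to the genuine variable-coefficient $Q_t$ — carried out through Lemma \ref{lem-2.3-0309}(iv) — holds with the stated constants, and so that in the $C_TC^\a$ and $C_T^{\a/2}L^\infty$ bounds the $\phi_2$-contribution also carries the harmless extra power of $T$.

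The main obstacle I anticipate is bookkeeping the time-weights: each term must be estimated in a weighted-in-time norm (because $u^\sharp$ blows up like $t^{-(\ga-\a)/2}$ in $C^\ga$ as $t\downarrow 0$), and one must check at each step that the Beta-type integrals $\int_0^t (t-s)^{-p}s^{-q}\,ds$ that arise have $p<1$ and $p+q<1$, so that they converge and yield the claimed power of $t$ (hence of $T$). The exponent conditions recorded in Subsection \ref{sec:2.1} — namely $\ga+\b-2>0$, $\ga \in (2\b+1,\a+\b)$, $\a+\b-\ga < 0$ but $\b/2 \ge 0$, and $3-2\a < 1/3$ — are exactly what make all these integrals finite; verifying the one delicate case, the $C_TC^\a$-estimate of $f^{(2)}$ where the derivative gain is closest to $2$, is the crux. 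Everything else is a routine application of Lemma \ref{lem-2.3-0309}(i)–(iv) in the spirit of Lemma 6 and the proof of well-posedness in \cite{BDH-19}.
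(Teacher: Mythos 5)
Your Duhamel decomposition is incomplete, and the omission hides the one genuinely non-trivial point of the lemma. The Schauder estimates you invoke through Lemma \ref{lem-2.3-0309}-(iv) (taken from \cite{BB-16} and Lemma 6 of \cite{BDH-19}) are for the semigroup $Q_t$ generated by the \emph{divergence-form} operator $\nabla(b\nabla\cdot)$, not for the non-divergence operator $b\De$ appearing in \eqref{eq-2.21-0309}; these differ by the first-order term $\nabla b\cdot\nabla$. You cannot simply say ``$b\De$ is treated the same way'': either $Q_t$ is the divergence-form semigroup, in which case your mild formula $f=Q_tf_0+\int_0^tQ_{t-s}(\phi_1+\phi_2)(s)\,ds$ is wrong because it misses the correction $-\int_0^t Q_{t-s}\bigl(\nabla b\cdot\nabla f(s)\bigr)ds$, or $Q_t$ is the semigroup of $b\De$ itself, in which case the quoted Schauder estimates are not available to you. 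The paper rewrites $b\De f=\nabla(b\nabla f)-\nabla b\cdot\nabla f$ and carries this extra term, call it $I_3(t)$, through the whole argument; since it involves the unknown $f$ rather than given data, it is controlled by $t^{\frac{\ga-\a}2}\|I_3(t)\|_{C^\ga}\lesssim T^{\frac{1-\ga+\a}2}\sup_{0<s\le t}s^{\frac{\ga-\a}2}\|f(s)\|_{C^\ga}$ and then \emph{absorbed} into the left-hand side using $1-\ga+\a>0$ and the smallness of $T$. This absorption is the real reason the statement reads ``choosing $T>0$ small enough''; your explanation of the role of small $T$ (a passage from frozen-coefficient to variable-coefficient estimates) is not what is needed and does not substitute for it. The same $\nabla b\cdot\nabla f$ term must also be tracked in the $C_TC^\a$ and $C_T^{\a/2}L^\infty$ estimates, where it is bounded via the weighted $C^\ga$ bound already established.

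Your treatment of the remaining pieces ($Q_tf_0$, the $\phi_1$- and $\phi_2$-convolutions) is essentially the paper's Steps 1--3 and the Beta-function bookkeeping you describe is the right mechanism for producing the factor $T^{\frac{\a+\b-\ga}2}$; but as written, the hesitant computation for the $\phi_2$ term should be fixed by the clean estimate $\int_0^t(t-s)^{-1+\frac{\a+\b-\ga}2}s^{-\frac{\ga-\a}2}ds = t^{\frac{\a+\b-\ga}2-\frac{\ga-\a}2}B\bigl(\tfrac{\a+\b-\ga}2,1-\tfrac{\ga-\a}2\bigr)$, which works because $\a+\b-\ga$ and $\ga-\a$ both lie in $(0,1)$. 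Note in particular that $\ga<\a+\b$, so $\a+\b-\ga>0$: the prefactor of the $\phi_2$ term is a positive power of $T$, and no sign subtleties of the kind you worried about arise.
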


\begin{proof}
To show this lemma, we use the semigroup approach   similar to that in \cite{BDH-19}. By the relation
$b\De f= \nabla( b\nabla f) -\nabla b \cdot \nabla f$, the parabolic equation  \eqref{eq-2.21-0309} can be rewritten as 
 \begin{align*}
\partial_t f - \nabla( b\nabla f) = -\nabla b \cdot \nabla f +\phi_1+\phi_2, \quad f(0) =f_0.
\end{align*}
Let $Q_t$ denote the semigroup generated by $\partial_t -\nabla( b\nabla \cdot)$.
Then the solution $f$ of  \eqref{eq-2.21-0309} can be represented 
in the mild form as 
\begin{align} \label{eq-2.22-0309}
f(t) = & Q_t f_0 + \int_0^t Q_{t-s} \phi_1(s)ds + \int_0^t Q_{t-s} \phi_2(s)ds -\int_0^t Q_{t-s} \big(\nabla b \cdot \nabla f(s) \big)ds\\
=: & I_0(t) + I_1(t) + I_2(t) + I_3(t). \notag
\end{align}
It is shown in \cite{BB-16} that $Q_t$ satisfies some important Schauder estimates similar to Lemma \ref{lem-2.3-0309} for  $P_t$; recall Lemma \ref{lem-2.3-0309}-{\rm (iv)}. 
We divide the  proof into three steps.

\noindent  {\it Step 1.} We show that there exists a small enough time horizon $T>0$ such that  
\begin{align} \label{eq-2.23-0309}
\sup_{0<t \le T} t^{\frac{\ga-\a}2} \|f(t)\|_{C^\ga}  \lesssim & \|f_0\|_{C^\a}
+ \sup_{0<t \le T} t^{\frac{\ga-\a}2} \|\phi_1(t)\|_{C^{\ga-2}}  \\
& + T^{\frac{\a +\b -\gamma}2}\sup_{0<t \le T} t^{\frac{\ga -\a}2} 
\|\phi_2(t)\|_{C^{\a+\b-2}}. \notag
\end{align}
This  will be used in both {\it Step 2} and {\it Step 3}.
From \eqref{eq-3.1-430} and {\rm (iii)} in Lemma  \ref{lem-2.3-0309} for  $Q_t$ instead of $P_t$, it follows easily that
\begin{align} \label{eq-2.24-0309}
t^{\frac{\gamma -\a}2 }\|I_0(t)\|_{C^\gamma} \lesssim  \|f_0\|_{C^\a} \  \ \text{and} \ \ 
t^{\frac{\gamma -\a}2 }\|I_1(t)\|_{C^\gamma} \lesssim  \sup_{0<s \leq t}s^{\frac{\gamma -\a}2} \|\phi_1(s)\|_{C^{\gamma -2}}.
\end{align}
Since $\gamma >0>\a +\b -2$, by \eqref{eq-3.1-430}  for $Q_t$, we see that
\begin{align} \label{eq-2.25-0309}
t^{\frac{\gamma -\a}2 }\|I_2(t)\|_{C^\gamma} \leq  
& t^{\frac{\gamma -\a}2 } \int_0^t \|Q_{t-s}\phi_2(s)\|_{C^\gamma}ds \\
\lesssim & t^{\frac{\gamma -\a}2 } \int_0^t  (t-s)^{-\frac{\gamma -(\a +\b -2)}{2}} \|\phi_2(s)\|_{C^{\a +\b -2}}ds \notag \\
\lesssim & t^{\frac{\gamma -\a}2 } \sup_{0<s \leq t}s^{\frac{\gamma -\a}2} \|\phi_2(s)\|_{C^{\a +\b -2}}
 \int_0^t  (t-s)^{-1+ \frac{\a +\b -\gamma}{2}} s^{-\frac{\gamma -\a}{2}}ds \notag \\
\lesssim & T^{\frac{\a+ \b -\ga}2 } \sup_{0<s \leq t}s^{\frac{\gamma -\a}2} \|\phi_2(s)\|_{C^{\a +\b -2}}, \ t\leq T.  \notag
\end{align}
Here the fact that for any $t>0$
\begin{align} \label{eq-2.26-0309}
\int_0^t (t-s)^{p-1} s^{q-1} ds = t^{p+q-1}B(p,q),  \ \ p, q\in (0,1),
\end{align}
and $\a + \b -\gamma, \ga -\a \in (0,1)$
have been used for the last line, where $B(p,q)$ denotes the beta function with parameters $p, q$. 

To evaluate the third term $I_3(t)$,
we first note that for any $0<t\leq T$
\begin{align} \label{eq-2.27-0310}
\sup_{0< s \leq t } s^{\frac{\gamma-\a}{2}}
\|\nabla b \cdot \nabla f(s)\|_{C^{\a-1} }
\lesssim &  \sup_{0< s \leq t }s^{\frac{\gamma-\a}{2}} \|\nabla b\|_{C^{\a-1}}
                \|\nabla f(s)\|_{C^{\gamma -1}} \\
\lesssim &
                 \sup_{0< s \leq t }s^{\frac{\gamma-\a}{2}} \| f(s)\|_{C^{\gamma }}, \notag
\end{align}
where $1<\a<\ga$ and Lemma \ref{lem-5.1-0301} have been used for the first inequality.
Then, by analogous arguments for \eqref{eq-2.25-0309}, we have 
\begin{align*}
t^{\frac{\gamma-\alpha}2}\|I_3(t)\|_{C^\gamma}
&\lesssim t^{\frac{\gamma-\alpha}2}\int_0^t(t-s)^{-\frac{\gamma-\alpha+1}2}\|\nabla b\cdot\nabla f(s) \|_{C^{\alpha-1}}ds \\
&\lesssim T^{\frac{1-\gamma+\alpha}2} \sup_{0< s \leq t }s^{\frac{\gamma-\a}{2}} \| f(s)\|_{C^{\gamma }},  \ t\in [0,T], \notag
\end{align*}
where we have used $1- \ga +\a \in (0, 1)$ and 
\eqref{eq-2.26-0309} for the last inequality.
So, using the relation $1- \ga +\a>0$, we can choose a  small enough $T>0$ such that 
\begin{align}\label{eq-2.28-0310}
t^{\frac{\gamma-\alpha}2}\|I_3(t)\|_{C^\gamma} 
\leq  \frac12 \sup_{0< s \leq t }s^{\frac{\gamma-\a}{2}} \| f(s)\|_{C^{\gamma }},  \ t\in [0,T].
\end{align}
Consequently, by \eqref{eq-2.24-0309}, \eqref{eq-2.25-0309}, \eqref{eq-2.28-0310} and $1- \ga +\a>0$, we obtain 
\eqref{eq-2.23-0309}.

\noindent {\it Step 2.} In this step, we derive the estimate on the norm $\|f\|_{C_TC^\a}.$
By \eqref{eq-3.1-430}  with $\b=0$ in  Lemma  \ref{lem-2.3-0309} for $Q_t$, it is easy to know that $\|I_0(t)\|_{C^\a} \lesssim \|f_0\|_{C^\a}$. 
Using  \eqref{eq-3.1-430}  for $Q_t$ again and the relation $\a- \ga+2 \in (1,2), \ \ga-\a\in (0,1)$, we have
\begin{align} \label{eq-3.31-430}
& \|I_1(t)\|_{C^\a} + \|I_2(t)\|_{C^\a}  \\
 \leq &  \int_0^t \big( \| Q_{t-s} \phi_1(s)\|_{C^\a} + \| Q_{t-s} \phi_2(s)\|_{C^\a} \big) ds \notag\\
\lesssim  & \int_0^t \left( 
(t-s)^{-\frac{\a-\ga+2}{2}} \|\phi_1(s)\|_{C^{\gamma -2 } } + 
(t-s)^{-\frac{2-\b}{2}} \|\phi_2(s)\|_{C^{\a + \b -2} } 
\right) ds \notag \\
\leq  &
\sup_{0<s\leq t} s^{\frac{\gamma -\a}2}  \|\phi_1(s)\|_{C^{\ga -2} } \int_0^t (t-s)^{-\frac{\a-\ga+2}{2}} s^{-\frac{\gamma -\a}{2}} ds 
\notag \\
& +\sup_{0<s\leq t} s^{\frac{\gamma -\a}2}  \|\phi_2(s)\|_{C^{\a + \b -2} } \int_0^t (t-s)^{-\frac{2-\b}{2}} s^{-\frac{\gamma -\a}{2}} ds
\notag \\
\lesssim & \sup_{0<s\leq t} s^{\frac{\gamma -\a}2}  \|\phi_1(s)\|_{C^{\ga -2} }+ T^{\frac{\a +\b -\gamma}2 } \sup_{0<s\leq t} t^{\frac{\gamma -\a}2 }  \|\phi_2(s)\|_{C^{\a + \b -2} }, \ t\leq T.\notag
\end{align}
Using \eqref{eq-3.1-430} for $Q_t$ and \eqref{eq-2.27-0310}, we similarly have 
\begin{align}\label{eq-3.32-430}
\|I_3(t)\|_{C^\alpha}
&\leq \int_0^t \|Q_{t-s} \big(\nabla b \cdot \nabla f(s) \big)\|_{C^\a}ds  \\
&\lesssim \int_0^t(t-s)^{-\frac12}\|\nabla b\cdot\nabla f (s)\|_{C^{\alpha-1}}ds \notag \\
&\lesssim T^{\frac{1-\gamma+\alpha}2}  \sup_{0< s \leq T }s^{\frac{\gamma-\a}{2}} \| f(s)\|_{C^{\gamma }}, \ t\in [0,T]. \notag
\end{align}
From the above estimates and \eqref{eq-2.23-0309}, we obtain that
\begin{align} \label{eq-2.30-0306}
\|f\|_{C_T C^\a} \lesssim & \|f_0\|_{C^\a}
+ \sup_{0<t \le T} t^{\frac{\ga-\a}2} \|\phi_1(t)\|_{C^{\ga-2}}  
 + T^{\frac{\a +\b -\gamma}2}\sup_{0<t \le T} t^{\frac{\ga -\a}2} 
\|\phi_2(t)\|_{C^{\a+\b-2}}. 
\end{align}
\noindent{\it Step 3.} We devote to evaluating $\|f\|_
{C_T^{\frac{\a}2}L^\infty}$.  Let $0\leq s<t \leq T$. Then by \eqref{eq-2.22-0309}, we have
\begin{align*}
& \|f(t)- f(s)\|_{L^\infty}   \\
\leq & \|(Q_{t}- Q_s) f_0\|_{L^\infty}  
   + \left\|\int_0^s \big(Q_{t-r} - Q_{s-r} \big) \phi_1(r)dr \right\|_{L^\infty} 
 + \left\|\int_s^t  Q_{t-r}  \phi_1(r)dr \right\|_{L^\infty} \notag \\
& + \left\|\int_0^s \big(Q_{t-r} -  Q_{s-r} \big) \phi_2(r)dr \right\|_{L^\infty} 
 + \left\|\int_s^t  Q_{t-r}  \phi_2(r)dr \right\|_{L^\infty}  \notag \\
 & + \left\|\int_0^s \big(Q_{t-r} -  Q_{s-r} \big) 
\big(\nabla b  \cdot \nabla f(r) \big) dr \right\|_{L^\infty} 
 + \left\|\int_s^t  Q_{t-r} 
 \big(\nabla b \cdot \nabla f(r) \big) dr\right\|_{L^\infty}  \notag \\
 =: & J_0(s, t) +J_1(s, t) +J_2(s, t) +J_3(s, t) +J_4(s, t) + J_5(s, t) +J_6(s, t). \notag
\end{align*}
By  \eqref{eq-3.5-430} in Lemma \ref{lem-2.3-0309}  for $Q_t$ and the contractivity of the semigroup $Q_t$ on $L^\infty$, 
we have
\begin{align*}
J_0(s, t) \lesssim  \|(Q_{t-s}- {\rm Id}) f_0\|_{L^\infty}   
\lesssim |t-s|^{\frac{\a}2} \|f_0\|_{C^\a}.
\end{align*}
Thanks to \eqref{eq-3.5-430}, the terms $J_i(s, t), i=1,3,5$
can be estimated by repeating essentially the same arguments in {\it Step 2}.
In fact, using  \eqref{eq-3.5-430}, 
we have 
\begin{align*}
J_1(s, t)+ J_3(s, t)
 \lesssim   &  |t-s|^{\frac{\a}{2}}\int_0^s \big(\|Q_{s-r}\phi_1(r)\|_{C^\a} + \|Q_{s-r}\phi_2(r)\|_{C^\a} \big) dr, \\
  J_5(s, t ) \lesssim  &|t-s|^{\frac{\a}{2} }  \int_0^s \|Q_{s-r}\big(\nabla b \cdot \nabla f(r) \big)\|_{C^{\a}} dr.
\end{align*}
Then, by the analogous arguments for \eqref{eq-3.31-430} and \eqref{eq-3.32-430}, we have 
\begin{align*}
J_1(s, t)+ J_3(s, t)
 \lesssim   & |t-s|^{\frac{\a}2 } \Big(
\sup_{0< r \leq T} r^{\frac{\gamma -\a}{2}} \|\phi_1(r)\|_{C^{\gamma-2}} 
+
T^{\frac{\a +\b -\gamma}{2}}\sup_{0< r \leq T} r^{\frac{\gamma -\a}{2}} \|\phi_2(r)\|_{C^{\a +\b-2} } \Big), \\
  J_5(s, t ) \lesssim  &  |t-s|^{\frac\a2}  T^{\frac{1-\gamma+\a}2} 
\sup_{0< r \leq T }r^{\frac{\gamma-\a}{2}} \| f(r)\|_{C^{\gamma }}.
\end{align*}
It is easier to evaluate $J_2(s, t)$ and $J_4(s, t)$ by noting that $\ga -2<0$ and $\a + \b -2<0$ and using \eqref{eq-3.3-430} in 
Lemma \ref{lem-2.3-0309} for $Q_t$. In fact, we have
\begin{align*}
& J_2(s, t) + J_4(s, t) \\
\lesssim &\int_s^t \left( 
|t-r|^{\frac{\gamma -2}2} \|\phi_1(r)\|_{C^{\gamma-2}} + 
|t-r|^{\frac{\a +\b -2}{2}}\|\phi_2(r)\|_{C^{\a +\b-2}} \right) dr \notag \\
\leq &  |t-s|^{\frac{\a}2 }\sup_{0< r \leq t} r^{\frac{\gamma-\a}{2}} \|\phi_1(r)\|_{C^{\gamma-2} }
\int_0^t (t-r)^{-1+ \frac{\gamma -\a}{2}} r^{-\frac{\gamma-\a}{2}}dr\notag \\
&  +|t-s|^{\frac{\a}2 }\sup_{0< r \leq t} r^{\frac{\gamma-\a}{2}} \|\phi_2(r)\|_{C^{\a +\b-2}}
\int_0^t (t-r)^{-1+\frac{\b}{2}} r^{-\frac{\gamma-\a}{2}}dr\notag \\
\lesssim &  |t-s|^{\frac{\a}2 }
\Big(\sup_{0< r \leq T} r^{\frac{\gamma-\a}{2}} \|\phi_1(r)\|_{C^{\gamma-2} }+  T^{\frac{\a +\b -\gamma}{2}}\sup_{0< r \leq T} r^{\frac{\gamma-\a}{2}} \|\phi_2(r)\|_{C^{\a +\b-2}} \Big).\notag 
\end{align*}
For the term $J_6(s, t)$, 
thanks to the  contractivity  of the semigroup $Q_t$ on $L^\infty$ and the fact $C^{\a-1}\subset L^\infty$, by \eqref{eq-2.27-0310}, we have
\begin{align*}
J_6( s, t ) \leq &\int_s^t \left\| Q_{t-r} \big( \nabla b\cdot \nabla f (r)\big)\right\|_{L^\infty} dr \\
\lesssim & \int_s^t \|\nabla b \cdot \nabla f (r)\|_{C^{\a-1}} dr \\
\lesssim &|t-s|^{\frac{\alpha}2}  T^{1-\frac{\ga}{2}} 
\sup_{0<r\leq T}r^{\frac{\ga -\a}2 }  \|f(r)\|_{C^\ga}.
\end{align*}
Noting that $1-\frac{\ga}{2}>0$ and using the above estimates in this step together with  \eqref{eq-2.23-0309}, 
we have that 
\begin{align} \label{eq-2.37-0306}
\|f\|_{C_T^{\frac{\a}2}L^\infty} \lesssim \|f_0\|_{C^\a}
+ \sup_{0<t \le T} t^{\frac{\ga-\a}2} \|\phi_1(t)\|_{C^{\ga-2}}   + T^{\frac{\a +\b -\gamma}2}\sup_{0<t \le T} t^{\frac{\ga -\a}2} 
\|\phi_2(t)\|_{C^{\a+\b-2}}
\end{align}
holds for small enough $T>0$.

Consequently, we  obtain the desired result \eqref{eq-3.261-430} by \eqref{eq-2.23-0309},  \eqref{eq-2.30-0306} and \eqref{eq-2.37-0306}. Hence the proof is completed.
\end{proof}

Applying Lemma \ref{lem:5.3} to the equation \eqref{eq:3.L0phi} below, which is
essentially \eqref{eq-2.8-0323} in our fixed point problem, we have the following estimate
for $v^\sharp:= v- \bar{\Pi}_{v'}X$ for $v$ determined by \eqref{eq-2.8-0323} and
\eqref{eq-2.7-0323}.

\begin{cor}  \label{cor:5.4}
Let $\phi_1 \in C((0,T], C^{\ga-2})$ and 
$\phi_2 \in C((0,T], C^{\a+\b-2})$
be functions as in Lemma \ref{lem:5.3}.  For given
$z_0\in C^\a$ and $z'\in \mathcal{L}_T^\b$, let $z$ be the solution of the equation
\begin{equation}  \label{eq:3.L0phi}
\big(\partial_t - a(\nabla u_0^T) \De \big) z
 = \Pi_{a(\nabla u_0^T)z'} \xi + \phi_1+\phi_2, \quad z(0) =z_0.
\end{equation}
Then, $(z,z') \in {\bf C}_{\a,\b,\ga}(X)$ and we have the estimate
\begin{align} \label{eq-2.21-0307}
 & \sup_{0<t \le T}  t^{\frac{\ga-\a}2} \|z^\sharp(t)\|_{C^\ga} 
+\|z^\sharp\|_{\mathcal{L}_T^\a}\\
 \lesssim & \|z_0\|_{C^\a} + \|z'(0)\|_{L^\infty} \|\xi\|_{C^{\a -2}}
+ T^{\frac{\a + \b -\ga}2} (1+\|u_0\|_{C^\a}) \|z'\|_{\mathcal{L}_T^\b} \|\xi\|_{C^{\a -2}}
\notag \\
& \quad + \sup_{0<t \le T} t^{\frac{\ga-\a}2} \|\phi_1(t)\|_{C^{\ga-2}} 
+ T^{\frac{\a + \b -\ga}2}\sup_{0<t \le T}t^{\frac{\ga -\a}2} 
\|\phi_2(t)\|_{C^{\a+\b-2}}\notag
\end{align}
with a multiplicative positive constant in the right hand side depending 
only on the $C^\a$-norm of $u_0$.

If $(y,y') \in {\bf C}_{\a,\b,\ga}(X)$ is associated similarly to
another set of data $\psi_1, \psi_2, y_0$ and $y'$ with $y$ the 
solution of the equation
$$
\big(\partial_t - a(\nabla u_0^T) \De \big) y
 = \Pi_{a(\nabla u_0^T)y'} \xi + \psi_1+\psi_2, \quad 
 y(0) =y_0 \in C^\a,
$$
then we have the Lipschitz continuity bound 
\begin{align}\label{eq-2.22-0307}
& \sup_{0<t \le T}  t^{\frac{\ga-\a}2} \|z^\sharp(t) 
- y^\sharp(t)\|_{C^\ga} 
+\|z^\sharp-y^\sharp\|_{\mathcal{L}_T^\a}\\
 \; \lesssim &  \|z_0-y_0\|_{C^\a} + \|z'(0)-y'(0)\|_{L^\infty} \|\xi\|_{C^{\a -2}}
+ T^{\frac{\a + \b -\ga}2} (1+\|u_0\|_{C^\a}) \|z'-y'\|_{\mathcal{L}_T^\b} \|\xi\|_{C^{\a -2}} \notag \\
& \quad + \sup_{0<t \le T} t^{\frac{\ga-\a}2} 
\|\phi_1(t)-\psi_1(t) \|_{C^{\ga-2}}
+ T^{\frac{\a + \b -\ga}2}\sup_{0<t \le T} t^{\frac{\ga -\a}2} 
\|\phi_2(t)-\psi_2(t)\|_{C^{\a +\b -2}} \notag
\end{align}
with a multiplicative positive constant in the right hand side depending 
only on the $C^\a$-norm of $u_0$. 
\end{cor}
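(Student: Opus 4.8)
The plan is to strip off the rough part $\bar\Pi_{z'}X$ and reduce everything to the Schauder estimate of Lemma~\ref{lem:5.3} for $z^\sharp:=z-\bar\Pi_{z'}X$. Since $-\De X=Q\xi=\xi-\xi(\T)$ by \eqref{eq:Xxi} and the paraproduct of anything with a constant vanishes, one has $\Pi_{a(\nabla u_0^T)z'}(-\De X)=\Pi_{a(\nabla u_0^T)z'}\xi$; hence Lemma~\ref{lem-2.4-0319} shows that $\phi_3:=\Pi_{a(\nabla u_0^T)z'}\xi-\mathcal{L}^0(\bar\Pi_{z'}X)$ lies in $C_TC^{\a+\b-2}$ with $\|\phi_3\|_{C_TC^{\a+\b-2}}\lesssim(1+T^{-\frac{\ga-\a}2}\|u_0\|_{C^\a})\|z'\|_{\mathcal{L}_T^\b}\|X\|_{C^\a}$. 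Subtracting $\mathcal{L}^0(\bar\Pi_{z'}X)$ from \eqref{eq:3.L0phi}, $z^\sharp$ then solves $\mathcal{L}^0z^\sharp=\phi_1+(\phi_2+\phi_3)$ with $z^\sharp(0)=z_0-\Pi_{z'(0)}X$, the last identity because the modified paraproduct reduces to the ordinary one at $t=0$.

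Next I would apply Lemma~\ref{lem:5.3} with $b=a(\nabla u_0^T)$ — which belongs to $C_b^2(\T)$, is bounded below by $c$, and has $C^\a$-norm controlled by $1+\|u_0\|_{C^\a}$ through \eqref{eq-2.11-0424} and the Schauder smoothing of $u_0^T$, the auxiliary $T$-powers being harmless for small $T$ — taking $f_0=z^\sharp(0)$, the same $\phi_1$, and $\phi_2+\phi_3$ in place of $\phi_2$. Since $\phi_3\in C_TC^{\a+\b-2}$ we have $\sup_{0<t\le T}t^{\frac{\ga-\a}2}\|\phi_3(t)\|_{C^{\a+\b-2}}\le T^{\frac{\ga-\a}2}\|\phi_3\|_{C_TC^{\a+\b-2}}$, so $\phi_2+\phi_3$ still satisfies \eqref{eq-3.12-0324}, and \eqref{eq-3.261-430} bounds the left side of \eqref{eq-2.21-0307} by $\|z^\sharp(0)\|_{C^\a}+\sup_{0<t\le T}t^{\frac{\ga-\a}2}\|\phi_1(t)\|_{C^{\ga-2}}+T^{\frac{\a+\b-\ga}2}\sup_{0<t\le T}t^{\frac{\ga-\a}2}\|(\phi_2+\phi_3)(t)\|_{C^{\a+\b-2}}$. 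It then remains to handle the two genuinely new contributions: for the initial datum, Bony's estimate (Lemma~\ref{lem-5.1-0301}-(ii)) and \eqref{eq-3.42-0420} give $\|z^\sharp(0)\|_{C^\a}\le\|z_0\|_{C^\a}+\|\Pi_{z'(0)}X\|_{C^\a}\lesssim\|z_0\|_{C^\a}+\|z'(0)\|_{L^\infty}\|\xi\|_{C^{\a-2}}$.

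For the $\phi_3$-contribution I would note $T^{\frac{\a+\b-\ga}2}T^{\frac{\ga-\a}2}=T^{\b/2}$, and that in our range of exponents $\ga>2\b+1>\tfrac53>\a$, so for $T\le1$ one has $T^{\b/2}\le T^{\frac{\a+\b-\ga}2}$ and hence, using the bound on $\|\phi_3\|_{C_TC^{\a+\b-2}}$ together with \eqref{eq-3.42-0420},
\[
T^{\frac{\a+\b-\ga}2}\sup_{0<t\le T}t^{\frac{\ga-\a}2}\|\phi_3(t)\|_{C^{\a+\b-2}}\le T^{\b/2}\|\phi_3\|_{C_TC^{\a+\b-2}}\lesssim T^{\frac{\a+\b-\ga}2}(1+\|u_0\|_{C^\a})\|z'\|_{\mathcal{L}_T^\b}\|\xi\|_{C^{\a-2}}.
\]
Collecting these bounds gives \eqref{eq-2.21-0307}, and in particular $(z,z')\in{\bf C}_{\a,\b,\ga}(X)$ since $\|z'\|_{\mathcal{L}_T^\b}<\infty$ by hypothesis.

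Finally, \eqref{eq-2.22-0307} follows by linearity: \eqref{eq:3.L0phi} and the map $z\mapsto z^\sharp$ are linear in $(z,z')$, and so are $f\mapsto\bar\Pi_fX$ and $f\mapsto\Pi_{a(\nabla u_0^T)f}\xi$, so $z^\sharp-y^\sharp$ solves $\mathcal{L}^0(z^\sharp-y^\sharp)=(\phi_1-\psi_1)+(\phi_2-\psi_2)+(\phi_3-\widetilde\phi_3)$ with $(z^\sharp-y^\sharp)(0)=(z_0-y_0)-\Pi_{(z'-y')(0)}X$, where $\widetilde\phi_3$ is the counterpart of $\phi_3$ built from $y'$, and Lemma~\ref{lem-2.4-0319} controls $\phi_3-\widetilde\phi_3$ in $C_TC^{\a+\b-2}$ with $z'$ replaced by $z'-y'$; repeating the previous two paragraphs — the implicit constant in Lemma~\ref{lem:5.3} depends only on $b=a(\nabla u_0^T)$, which is unchanged — yields \eqref{eq-2.22-0307}. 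The main thing to watch is the $T$-power bookkeeping that turns the superficially singular factor $T^{-\frac{\ga-\a}2}\|u_0\|_{C^\a}$ coming from Lemma~\ref{lem-2.4-0319} into the harmless $T^{\frac{\a+\b-\ga}2}(1+\|u_0\|_{C^\a})$, together with the identity $\bar\Pi_{z'}X|_{t=0}=\Pi_{z'(0)}X$ that is responsible for the $\|z'(0)\|_{L^\infty}\|\xi\|_{C^{\a-2}}$ term; neither is deep.
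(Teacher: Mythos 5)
Your proposal is correct and follows essentially the same route as the paper: subtract $\bar\Pi_{z'}X$, use the intertwining estimate of Lemma \ref{lem-2.4-0319} (with the observation $\Pi_{a(\nabla u_0^T)z'}(-\De X)=\Pi_{a(\nabla u_0^T)z'}\xi$ and $\bar\Pi_{z'}X|_{t=0}=\Pi_{z'(0)}X$), apply the Schauder estimate of Lemma \ref{lem:5.3} with $b=a(\nabla u_0^T)$, bound $\|z^\sharp(0)\|_{C^\a}$ by Bony's estimate, absorb the $T^{-\frac{\ga-\a}2}$ factor into the time weight exactly as in \eqref{eq-3.29-0321}, and obtain \eqref{eq-2.22-0307} by linearity. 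Your $T$-power bookkeeping ($T^{\frac{\a+\b-\ga}2}T^{\frac{\ga-\a}2}=T^{\b/2}\le T^{\frac{\a+\b-\ga}2}$) reproduces the paper's bound, so no substantive difference remains.
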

\begin{proof}
 Set
$$
z^\sharp = z- \bar{\Pi}_{z'}X.
$$
Then, to prove the first part of this lemma, it is enough for us 
to show \eqref{eq-2.21-0307}.
Recalling that $\mathcal{L}^0 =\partial_t -a(\nabla u_0^T) \Delta$ and  noting that 
\begin{align*}
\mathcal{L}^0(\bar{\Pi}_{z'}X) = \left\{ \mathcal{L}^0(\bar{\Pi}_{z'}X) - \Pi_{a(\nabla u_0^T)z'} (-\Delta X)\right\} +\Pi_{a(\nabla u_0^T)z'} (-\Delta X), 
\end{align*}
we have that
\begin{align}\label{eq-2.23-0307}
\mathcal{L}^0z^\sharp =& \phi_1 + \phi_2 -\big\{ \mathcal{L}^0(\bar{\Pi}_{z'}X) - \Pi_{a(\nabla u_0^T)z'} (-\Delta X)\big\}
\end{align}
with the initial condition $z^\sharp (0)= z_0 - (\bar\Pi_{z'}X)(0)
= z_0 -\Pi_{z'(0)}X \in {C}^\alpha$. Note that 
$(\bar\Pi_{z'}X)(0)=\Pi_{z'(0)}X$ because $X$ is independent of time $t$, see p.\ 45 of \cite{BDH-19}.
We write  $\Pi_{z'(0)}X$  for $ (\bar\Pi_{z'}X)(0)$ whenever $z' \in \mathcal{L}_T^\b$ in this paper.

Since  $z' \in \mathcal{L}_T^\b$,
by the intertwining continuity estimate in  Lemma \ref{lem-2.4-0319}, we see  that
\begin{align*} 
\|\mathcal{L}^0(\bar{\Pi}_{z'}X) - \Pi_{a(\nabla u_0^T)z'}(-\Delta X)
\|_{C_TC^{\a+ \b-2}}
\lesssim (1+ T^{-\frac{\gamma -\a}{2}}\|u_0\|_{C^\a} )\|z' \|_{\mathcal{L}_T^\b}\|X\|_{C^\a},
\end{align*}
which particularly implies that 
\begin{align} \label{eq-3.29-0321}
\sup_{0<t \leq T} t^{\frac{\ga -\a}{2}}
\|\big(\mathcal{L}^0(\bar{\Pi}_{z'}X) - \Pi_{a(\nabla u_0^T)z'}(-\Delta X) \big)(t)\|_{C^{\a+ \b-2}}
\lesssim (1+ \|u_0\|_{C^\a} )\|z' \|_{\mathcal{L}_T^\b}\|X\|_{C^\a}.
\end{align}
So, the condition \eqref{eq-3.12-0324} in Lemma \ref{lem:5.3} holds for
$\phi_2 -\big\{ \mathcal{L}^0(\bar{\Pi}_{z'}X) - \Pi_{a(\nabla u_0^T)z'} (-\Delta X)\big\}$. 

Now, let $b(x)=a(\nabla u_0^T(x))$.  By the assumption on $a$, we know
$b$ satisfies the assumption of Lemma \ref{lem:5.3}.  
Then, \eqref{eq-2.21-0307} is obtained by  Lemma \ref{lem:5.3}, \eqref{eq-3.29-0321} and noting that $\|z^\sharp (0)\|_{C^\a } \lesssim \|z_0\|_{C^\a} + \|z'(0)\|_{L^\infty}\|X\|_{C^\a}$.

Next, let us give the proof of \eqref{eq-2.22-0307}, which is shown similarly to \eqref{eq-2.21-0307} with the help of  Lemma \ref{lem:5.3}. In fact, by the assumptions on $(z,z')$ and 
$(y, y')$, we deduce that 
\begin{align*}
\big(\partial_t - a(\nabla u_0^T) \De \big) (y-z)
 = \Pi_{a(\nabla u_0^T)(y'-z')} \xi + (\psi_1-\phi_1)+(\psi_2-\phi_2)
\end{align*} 
with $ y(0)-z(0) =y_0-z_0 \in C^\a$.

Setting $y^\sharp = y- \bar{\Pi}_{y'}X$ and then similarly to \eqref{eq-2.23-0307}, we have that 
\begin{align*}
\mathcal{L}^0(y^\sharp -z^\sharp) = &(\psi_1-\phi_1)+(\psi_2-\phi_2)
-\big\{ \mathcal{L}^0(\bar{\Pi}_{y'}X) - \Pi_{a(\nabla u_0^T)y'} (-\Delta X)\big\} 
\\
&+\big\{ \mathcal{L}^0(\bar{\Pi}_{z'}X) - \Pi_{a(\nabla u_0^T)z'} (-\Delta X)\big\}\\
=&(\psi_1-\phi_1)+(\psi_2-\phi_2)
-\big\{ \mathcal{L}^0(\bar{\Pi}_{y'-z'}X) - \Pi_{a(\nabla u_0^T)(y'-z')} (-\Delta X)\big\}
\end{align*}
with the initial condition $ 
(y^\sharp -z^\sharp) (0)=(y_0-z_0) -\Pi_{(y'-z')(0)}X \in {C}^\alpha$.
Hence,   the desired result \eqref{eq-2.22-0307} can be easily obtained by Lemma \ref{lem:5.3}.
\end{proof}

The next proposition shows the  local  growth and local Lipschitz continuity of the remainder
term $\zeta$ defined by \eqref{eq-2.8-0307}.  The proof is
given in Section \ref{sec-4-0406}.   
 
\begin{prop} \label{lem:Lip-zeta}
For any ${\bf u}_1 = (u_1,u_1'), {\bf u}_2 = (u_2,u_2') \in \mathcal{B}_T(\la)$,  
we have
\begin{align} 
& \sup_{0<t \le T} t^{\frac{\ga -\a}2} 
\|\zeta({\bf u}_1) (t)\|_{C^{\a +\b -2}} 
\lesssim  K(\|{\bf u}_1 \|_{\a, \b, \ga})\tilde{K}_1(X, \xi),    \label{eq-3.39-0409}\\
& \sup_{0<t \le T} t^{\frac{\ga -\a}2} 
\|\big(\zeta({\bf u}_1) -\zeta({\bf u}_2) \big)(t)\|_{C^{\a +\b -2}} 
\label{eq-3.40-0409}
\\
\lesssim & K(\|{\bf u}_1\|_{\a, \b, \ga}, \|{\bf u}_2\|_{\a, \b, \ga})
\|{\bf u}_1 -{\bf u}_2\|_{\a, \b, \ga}
\tilde{K}_2(X, \xi),   \notag  
\end{align}
where 
\begin{align*}
\tilde{K}_1(X, \xi) = & (1+\|\xi\|_{C^{\a -2}})^2(\|\xi\|_{C^{\a-2}}
+ \|\Pi(\nabla X, \xi)\|_{C^{2\a -3}}), \\
\tilde{K}_2(X, \xi) = & ( 1+\|\xi\|_{C^{\a -2}})^2
\big((1+\|\xi\|_{C^{\a-2}})^2
+ \|\Pi(\nabla X, \xi)\|_{C^{2\a -3}} \big).
\end{align*}
\end{prop}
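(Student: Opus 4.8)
The plan is to prove Proposition \ref{lem:Lip-zeta} by estimating each of the terms constituting $\zeta$ in \eqref{eq-2.8-0307} separately, namely
$$
\zeta = \Pi_\xi g(\nabla u) +g'(\nabla u) u' \Pi(\nabla X, \xi )
 + A_1  + \Pi(a(\nabla u_0^T), \bar\Pi_{u'}\xi ) +A_2 - a'(\nabla u) (u')^2 \Pi(\nabla X, \xi ) - A_3,
$$
where $A_1, A_2, A_3$ are given by \eqref{eq:A1}, \eqref{eq:A2}, \eqref{eq:A3}. The strategy is the same for growth and Lipschitz bounds, so I would first establish the growth estimate \eqref{eq-3.39-0409} carefully and then indicate that \eqref{eq-3.40-0409} follows by the same arguments after replacing each factor by a difference and using multilinearity of the paraproduct and resonant operators together with the local Lipschitz estimates \eqref{eq-2.12-0424}, \eqref{eq-2.14-0424} for composition with $a,a',g,g'$. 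The key preliminary observations are: (a) $\|u\|_{C_TC^\a}\lesssim(1+\|X\|_{C^\a})\|{\bf u}\|_{\a,\b,\ga}$ and the stronger \eqref{eq-3.5-0321}; (b) $\|\nabla X\|_{C^{\a-1}}\lesssim\|\xi\|_{C^{\a-2}}$ and $\|\bar\Pi_{u'}\xi\|_{C_TC^{\a-2}}\lesssim\|u'\|_{C_TC^\b}\|\xi\|_{C^{\a-2}}$ by Lemma \ref{lem:bar}-(i) (noting $\b>0$); (c) the Schauder bound \eqref{eq-2.7-0321} for $\|a(\nabla u_0^T)\|_{C^{\ga-1}}$ with its factor $T^{-(\ga-\a)/2}\|u_0\|_{C^\a}$, which when combined with the weight $t^{(\ga-\a)/2}$ produces the requested constants $K(\|{\bf u}\|_{\a,\b,\ga})$ absorbing the $u_0$-dependence into $K_0$.

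The heart of the proof is a term-by-term regularity bookkeeping. For $\Pi_\xi g(\nabla u)$, Bony's estimate (Lemma \ref{lem-5.1-0301}-(ii)) with $\xi\in C^{\a-2}$ and $g(\nabla u)\in C_TC^{\a-1}$ via \eqref{eq-2.11-0424} gives a bound in $C^{2\a-3}\subset C^{\a+\b-2}$; no time weight is needed here, so the $t^{(\ga-\a)/2}$ factor is harmless. For $g'(\nabla u)u'\Pi(\nabla X,\xi)$ and $a'(\nabla u)(u')^2\Pi(\nabla X,\xi)$, the new resonant input $\Pi(\nabla X,\xi)\in C^{2\a-3}$ (assumed, cf.\ Lemma \ref{lem:2.9}) is multiplied by $g'(\nabla u)u'\in C_TC^\b$ (product of $C^\b$ functions, using Lemma \ref{lem-5.1-0301}-(ii) and $\b>0$); since $2\a-3<\b$ and $(2\a-3)+\b=2\a+\b-3>0$ the product lands in $C^{2\a-3}$. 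For $\Pi(a(\nabla u_0^T),\bar\Pi_{u'}\xi)$ the estimate \eqref{eq:2.18} already derived in Section \ref{sec-2.1-0307} gives a bound in $C^{\a+\ga-3}$ with exactly the factor $T^{-(\ga-\a)/2}\|u_0\|_{C^\a}$ that the weight $t^{(\ga-\a)/2}$ compensates (here we need $\a+\ga-3\ge\a+\b-2$, i.e.\ $\ga\ge\b+1$, which holds since $\ga>2\b+1$). The terms $A_1, A_3$ are collections of $P_g, P_a$ (Taylor remainders, landing in $C_TC^{3\a-4}$, and $3\a-4\ge\a+\b-2$ since $2\a-2\ge\b$), modified commutators $\bar C$ (Lemma \ref{lem modified commutator}, into $C_TC^{2\a+\b-3}$) and resonant products $\Pi(R_1,\cdot)$ with $R_1=[\nabla,\bar\Pi_{u'}]X\in C_TC^{\a+\b-1}$ by \eqref{eq-4.1-0406}; since $(\a+\b-1)+(\a-2)=2\a+\b-3>0$ these resonant products are well defined in $C_TC^{2\a+\b-3}$, and $2\a+\b-3\ge\a+\b-2$ because $\a\ge1$. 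Finally $A_2$ involves $R(\cdot,\cdot;\xi)$ (Lemma \ref{lem-2.4-0405}), $\Pi_{\bar\Pi_{u'}\xi}(a(\nabla u)-a(\nabla u_0^T))$ and $(a(\nabla u)-a(\nabla u_0^T))R_2$ with $R_2=[\De,\bar\Pi_{u'}]X\in C_TC^{\a+\b-2}$ by \eqref{eq-4.2-0406}; using \eqref{eq-5.8-0301} to bound $\|a(\nabla u)-a(\nabla u_0^T)\|_{\mathcal{L}_T^\b}$ (which introduces a $T^{(\a-\b-1)/2}$ gain) all pieces land in $C_TC^{\a+\b-2}$.

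Throughout, each bound is a product of (i) a polynomial in $\|{\bf u}\|_{\a,\b,\ga}$ times $K_0$, i.e.\ a constant of the form $K(\|{\bf u}\|_{\a,\b,\ga})$, and (ii) a polynomial in $\|\xi\|_{C^{\a-2}}$ and $\|\Pi(\nabla X,\xi)\|_{C^{2\a-3}}$ — one checks that the worst case is degree $\le 2$ in $(1+\|\xi\|_{C^{\a-2}})$ times $(\|\xi\|_{C^{\a-2}}+\|\Pi(\nabla X,\xi)\|_{C^{2\a-3}})$, which is exactly $\tilde K_1(X,\xi)$ after invoking \eqref{eq-3.42-0420} to replace $\|X\|_{C^\a}$ by $\|\xi\|_{C^{\a-2}}$ (see Remark \ref{rem:3.1-A}); the quadratic dependence arises from the product $g'(\nabla u)u'$ or $a'(\nabla u)(u')^2$ and from the composition Lipschitz bounds \eqref{eq-3.4-0424}, \eqref{eq-3.8-03038}. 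For the Lipschitz estimate \eqref{eq-3.40-0409} one telescopes each product term, e.g.\ $g'(\nabla u_1)u_1'\Pi(\nabla X,\xi)-g'(\nabla u_2)u_2'\Pi(\nabla X,\xi) = (g'(\nabla u_1)-g'(\nabla u_2))u_1'\Pi(\nabla X,\xi)+g'(\nabla u_2)(u_1'-u_2')\Pi(\nabla X,\xi)$, and applies \eqref{eq-2.12-0424}, \eqref{eq-3.8-03038} (noting these hold for $g,a$ in place of $g',a'$ too) together with $\|u_1'-u_2'\|_{\mathcal{L}_T^\b}\le\|{\bf u}_1-{\bf u}_2\|_{\a,\b,\ga}$ and $\|u_1^\sharp-u_2^\sharp\|$-type terms from the definition of $\|\cdot\|_{\a,\b,\ga}$; this produces the extra factor $(1+\|\xi\|_{C^{\a-2}})^2$ that distinguishes $\tilde K_2$ from $\tilde K_1$. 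The main obstacle is purely organizational rather than conceptual: keeping track, across the roughly fifteen summands hidden inside $\zeta$, of the exact Hölder exponent and the exact powers of $(1+\|\xi\|_{C^{\a-2}})$ and $\|\Pi(\nabla X,\xi)\|_{C^{2\a-3}}$ so as to land uniformly in $C^{\a+\b-2}$ with the stated $\tilde K_i$; the one place requiring genuine care is the time-weight accounting for $\Pi(a(\nabla u_0^T),\bar\Pi_{u'}\xi)$ and for the $u^\sharp$-dependent entries of $A_2$, where the Schauder blow-up $t^{-(\ga-\a)/2}$ in $\|u_0^T\|_{C^\ga}$ (resp.\ $\|u^\sharp(t)\|_{C^\ga}$) must be matched exactly by the prefactor $t^{(\ga-\a)/2}$ in the statement.
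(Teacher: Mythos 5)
Your plan is correct and follows essentially the same route as the paper: Section \ref{sec-4-0406} also estimates $\zeta$ term by term (grouping the non-$A_i$ pieces into a single quantity $A_0$ and treating $A_1,A_2,A_3$ in separate lemmas), using exactly the ingredients you cite — Bony's estimates, the composition bounds \eqref{eq-2.11-0424}--\eqref{eq-2.14-0424} and \eqref{eq-3.4-0424}, \eqref{eq-3.8-03038}, the commutator and associativity lemmas, \eqref{eq:2.18} with the $t^{(\ga-\a)/2}$ weight absorbing $T^{-(\ga-\a)/2}$, \eqref{eq-5.8-0301}, and multilinearity plus the $P_g,P_a$ Lipschitz bounds for \eqref{eq-3.40-0409}, with the same exponent bookkeeping landing everything in $C^{\a+\b-2}$. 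Your identification of the worst powers of $(1+\|\xi\|_{C^{\a-2}})$ giving $\tilde K_1$ and $\tilde K_2$ also matches the paper's accounting.
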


\subsection{Proof of Theorem \ref{thm-2.1-0304}}

Based on these preparations, we are at the position to give the proof of Theorem \ref{thm-2.1-0304}.

First, we give the proof of (i).
Let us recall that for any  ${\bf u} =(u, u') \in \mathcal{B}_T(\lambda)$, the map  $\Phi({\bf u}) =\Phi(u, u')$ is defined by \eqref{eq-2.4-0304}.
Using  the notations $\e_1(u, u')$ and $\e_2(u, u')$ introduced in Lemma \ref{lem:5.1}, we can rewrite
$\mathcal{L}^0v$ in \eqref{eq-2.8-0323}  as the following: 
\begin{align} \label{eq-2.4-0304-b}
& \mathcal{L}^0v= \Pi_{a(\nabla u_0^T)v'} \xi+\e_1(u, u')+ 
\e_2(u, u') + \zeta, 
\end{align}
with the initial value $v(0)=u_0 \in C^\a$.

We first show that $\Phi$ maps $\mathcal{B}_T(\lambda)$ into itself,
if we choose $\lambda >0$ sufficiently large and $T>0$ small enough.
We have $v' \in \mathcal{L}_T^\b$ for $v'$ determined by 
\eqref{eq-2.7-0323} by Lemma \ref{lem:5.2}. 
Since $\b<\a -1$, Lemmas 2.1-{\rm (i)} and \ref{lem:5.1} yield  that 
\begin{align} \label{eq-3.30-0324}
\sup_{0<t\leq T} t^{\frac{\gamma-\a}{2}}\|\e_1(u,u')\|_{C^{\a +\b-2}}
\lesssim K(\|{\bf u}\|_{\a,\b,\ga})(1+\|\xi\|_{C^{\a -2}})\|\xi\|_{C^{\a -2}}.
\end{align} 

Now, let $\phi_1=  \e_2(u, u')$ and  $\phi_2= \e_1(u, u')+ \zeta$.
By  Lemma \ref{lem:5.1}, \eqref{eq-3.30-0324} and \eqref{eq-3.39-0409},  we see that $\phi_1$ and $\phi_2$ satisfy 
the assumptions in Corollary \ref{cor:5.4}.
In addition, according to the definition of $v'$, we have $v'(0) =u'(0)\in L^\infty$, by recalling that $u'(0)=\frac{g(\nabla u_0)}{a(\nabla u_0)}$.
Applying Corollary \ref{cor:5.4} to \eqref{eq-2.4-0304-b}, and then using  Lemmas \ref{lem:5.1},  \ref{lem:5.2} and \eqref{eq-3.39-0409},
we have 
\begin{align} \label{eq-3.31-0324}
 & \sup_{0<t \le T}  t^{\frac{\ga-\a}2} \|v^\sharp(t)\|_{C^\ga} 
+\|v^\sharp\|_{\mathcal{L}_T^\a}\\
 \lesssim & \|u_0\|_{C^\a} + \|v'(0)\|_{L^\infty} \|\xi\|_{C^{\a -2}}
+ T^{\frac{\a + \b -\ga}2} (1+\|u_0\|_{C^\a}) \|v'\|_{\mathcal{L}_T^\b} \|\xi\|_{C^{\a -2}}
\notag \\
& + \sup_{0<t \le T} t^{\frac{\ga-\a}2} \|\phi_1(t)\|_{C^{\ga-2}} 
+ T^{\frac{\a + \b -\ga}2}\sup_{0<t \le T} t^{\frac{\ga -\a}2} 
\big(\|\e_1(u, u')\|_{C^{\a +\b -2}} + \|\zeta(t)\|_{C^{\a+\b-2}} \big)\notag\\
 \lesssim & T^{\frac{\a + \b -\ga}2} K(\|{\bf u}\|_{\a, \b, \ga})\tilde{K}_1(X, \xi) 
 +K_0   (1+\|\xi\|_{C^{\a -2}}),
\notag
\end{align}
where $\tilde{K}_1(X, \xi)$ is the constant defined in Proposition 
\ref{lem:Lip-zeta},  and the relation $0<\a+\b-\ga<\a-\b-1$ has been 
used to obtain the order $\frac{\a + \b -\ga}2$ of $T$ in the last inequality. 

Recall that  $\|\Phi({\bf u})\|_{\a, \b, \ga} =\|(v, v')\|_{\a, \b, \ga}
=\|v'\|_{\mathcal{L}_T^\b} + \|v^\sharp\|_{\mathcal{L}_T^\a}
+ \sup_{0<t\le T} t^{\frac{\ga-\a}2} \|v^\sharp(t)\|_{C^{\ga}}.$ 
Therefore, thanks to Lemma  \ref{lem:5.2} and \eqref{eq-3.31-0324},
we obtain
\begin{align}\label{eq-3.38-0417}
\|\Phi({\bf u})\|_{\a, \b, \ga} \lesssim
 T^{\frac{\a + \b -\ga}2} K(\|{\bf u}\|_{\a, \b, \ga})\tilde{K}_1(X, \xi) 
 +K_0 (1+\|\xi\|_{C^{\a -2}}), 
\end{align}
where we have used the relation $\a+\b-\ga<\a-\b-1$ again.
Consequently, noting that $\a + \b -\ga >0$,  we can choose sufficient large  $\lambda>0$ and small enough $T>0$ such that 
$\Phi$ maps $\mathcal{B}_T(\lambda)$ into itself.

Now we give the proof of the  contractive property of $\Phi$ on $\mathcal{B}_T(\lambda)$. We use the same 
notations introduced in Lemmas \ref{lem:5.1} and  \ref{lem:5.2}.
Note that for
any  ${\bf u}_1 := (u_1,u_1') \in \mathcal{B}_T(\la)$ and  ${\bf u}_2 := (u_2,u_2') \in \mathcal{B}_T(\la)$, we have $u_1(0)=u_2(0)=u_0$ and $u_1'(0)=u_2'(0)$. So, the definition of $\Phi$ implies that $v_1(0)=v_2(0)=u_0$ and $v_1'(0)=v_2'(0)$. Applying  \eqref{eq-2.22-0307} with this fact and repeating essentially the same arguments for \eqref{eq-3.31-0324},  we have
 \begin{align}\label{eq-3.33-0324}
& \sup_{0<t \le T}  t^{\frac{\ga-\a}2} \|v_1^\sharp(t) 
- v_2^\sharp(t)\|_{C^\ga} 
+\|v_1^\sharp-v_2^\sharp\|_{\mathcal{L}_T^\a}\\
  \lesssim &  \sup_{0<t \le T} t^{\frac{\ga-\a}2} 
\|\big(\e_2({\bf u}_1)-\e_2{(\bf u}_2) \big)(t) \|_{C^{\ga-2}} + T^{\frac{\a+\b-\ga}2} (1+\|u_0\|_{C^\a}) 
\|v_1'-v_2'\|_{\mathcal{L}_T^\b} \|\xi\|_{C^{\a -2}} \notag \\
& + T^{\frac{\a + \b -\ga}2}\sup_{0<t \le T} t^{\frac{\ga -\a}2} 
\Big(
\|\big(\e_1({\bf u}_1)-\e_1({\bf u}_2) \big)(t)\|_{C^{\a +\b -2}} +
\|\big(\zeta({\bf u}_1) -\zeta({\bf u}_2) \big)(t)\|_{C^{\a +\b -2}} 
\Big)  \notag \\
 \lesssim & T^{\frac{\a + \b -\ga}2} 
K(\|{\bf u}_1\|_{\a, \b, \ga}, \|{\bf u}_2\|_{\a, \b, \ga}) 
\|{\bf u}_1 -{\bf u}_2\|_{\a, \b, \ga}\tilde{K}_2(X,\xi),\notag
\end{align}
where $\tilde{K}_2(X,\xi)$ is the constant defined in defined in Proposition \ref{lem:Lip-zeta}. 
Here, for the second inequality, we have used Lemma \ref{lem:5.1} for the terms
involving $\e_1$ (note $\a+\b-2<2\a-3$) and $\e_2$ (note $\a+\b-\ga<\a-\b-1$), Lemma \ref{lem:5.2}
for $\|v_1'-v_2'\|_{\mathcal{L}_T^\b}$ and \eqref{eq-3.40-0409} in Proposition \ref{lem:Lip-zeta} for 
$\zeta=\zeta(u, u')$.  Finally, by \eqref{eq-2.18-0304} in  Lemma \ref{lem:5.2} 
again and \eqref{eq-3.33-0324}, we have that
\begin{align}\label{eq-3.40-0417}
\| \Phi( {\bf u}_1) - \Phi({\bf u}_2)\|_{\a, \b, \ga}
\lesssim T^{\frac{\a + \b -\ga}2} 
K(\|{\bf u}_1\|_{\a, \b, \ga}, \|{\bf u}_2\|_{\a, \b, \ga}) 
\|{\bf u}_1 -{\bf u}_2\|_{\a, \b, \ga}\tilde{K}_2(X,\xi),
\end{align}
which clearly implies that 
 the map $\Phi$ inherits
its contractivity on $\mathcal{B}_T(\lambda)$ if $T>0$ is chosen sufficient small. 
This concludes the proof of the assertion {\rm (i)} of the theorem.

In the end, let us give the proof of {\rm (ii)}. Let $\lambda$ and $T$ be the chosen values in the proof of {\rm (i)}.
We first show the contractive property of  $\Phi$  is locally uniform in  the  enhanced noise $\hat{\xi}=(\xi, \Pi(\nabla X, \xi))\in C^{\a-2}\times C^{2\a -3}$. According to the definition 
of $\tilde{K}_2(X,\xi)$, we easily observe that $\tilde{K}_2(X,\xi)$ is
 bounded from above by an increasing polynomial $P$  of the norm of $\hat{\xi}$.  Then, 
for  all $\hat{\xi}\in B_r:=\{\hat{\xi}=(\xi, \Pi (\nabla X, \xi)): 
\|\xi\|_{C^{\a -2}}+ \|\Pi (\nabla X, \xi)\|_{C^{2\a -3}}\leq r\}, r>0$,
 by \eqref{eq-3.40-0417}, we have
$\| \Phi( {\bf u}_1) - \Phi({\bf u}_2)\|_{\a, \b, \ga}$ is controlled  by $T^{\frac{\a + \b -\ga}2} 
K(\|{\bf u}_1\|_{\a, \b, \ga}, \|{\bf u}_2\|_{\a, \b, \ga}) 
\|{\bf u}_1 -{\bf u}_2\|_{\a, \b, \ga}P(r)$. Therefore, 
using the fact that $\|{\bf u}_i\|_{\a, \b, \ga}\leq \lambda, i=1,2$ 
and 
$K(\|{\bf u}_1\|_{\a, \b, \ga},$ $\|{\bf u}_2\|_{\a, \b, \ga})$ does 
not depend on $X, \xi$ and $\Pi(\nabla X, \xi)$, we obviously
 see the map $\Phi$ is locally Lipschitz on $\mathcal{B}_T(\lambda)$ for all 
$\hat{\xi} \in B_r$ by making $T>0$ small enough if necessary. 
The important observation is that $T$ can be chosen as the function of $r$ for all $\hat{\xi} \in B_r$. Therefore, the desired 
result is proved.

Next, let us show the remaining part, that is, 
$\Phi$ depends continuously on $\hat{\xi}$ and the unique fixed point 
of $\Phi$ in  $\mathcal{B}_T(\la)$ is also continuous in $\hat{\xi}$. 
In order to do it, it is more convenient to take the pair $\tilde{\bf u}=(u',u^\sharp)$
as a variable instead of ${\bf u} = (u,u')$ and regard $\Phi=\Phi(\tilde{\bf u},\hat{\xi})$
as a map from $\widetilde{\bf C}_{\a, \b, \ga} \times ( C^{\a-2} \times C^{2\a -3} )$ to 
$\widetilde{\bf C}_{\a, \b, \ga}$, where  $\widetilde{\bf C}_{\a, \b, \ga} =
\{\tilde{\bf u} \equiv(u', u^\sharp)\in \mathcal{L}_T^\b \times \mathcal{L}_T^\a;
\|\tilde{\bf u}\|_{\a,\b,\ga}<\infty\}$.  Note that the norm 
$\|\cdot\|_{\a,\b,\ga}$ in \eqref{eq:2-abc} is defined essentially for
$\tilde{\bf u}$ and all estimates we obtained in ${\bf u}$ is the same
in $\tilde{\bf u}$.  In particular, the space $\widetilde{\bf C}_{\a, \b, \ga}$ does not 
depend on the noise $\xi$ or $X$.  

Then, thanks to the implicit
function theorem, see Proposition C.1.1 of \cite{DaZ} and the
locally uniform contractivity of  $\Phi$ in $\tilde{\bf u}$, it is enough for us to show  the continuity
of $\Phi$ in $(\tilde{\bf u}, \hat{\xi})$. In fact, we can show that $\Phi$ is 
locally Lipschitz continuous in $(\tilde{\bf u}, \hat{\xi})$. For any two elements 
$(\tilde{\bf u}_1, \hat{\xi}_1)$ and $(\tilde{\bf u}_2, \hat{\xi}_2)$, we see
$
\|\Phi(\tilde{\bf u}_1, \hat{\xi}_1) - \Phi(\tilde{\bf u}_2, \hat{\xi}_2)\|_{\a, \b, \ga}
$
is bounded by
\begin{align*}
\|\Phi(\tilde{\bf u}_1, \hat{\xi}_1) - \Phi(\tilde{\bf u}_2, \hat{\xi}_1)\|_{\a, \b, \ga} +
\|\Phi(\tilde{\bf u}_2, \hat{\xi}_1) - \Phi(\tilde{\bf u}_2, \hat{\xi}_2)\|_{\a, \b, \ga}.
\end{align*}
According to \eqref{eq-3.40-0417}, we only have to 
study the second term. However, thanks to the multilinear property of $\e_1$ and $\zeta$ in the variable
$(X, \xi, \Pi(\nabla X, \xi))$, 
by analogous, but simpler, arguments for \eqref{eq-3.38-0417},
we have 
\begin{align*}
\|\Phi(\tilde{\bf u}_2, \hat{\xi}_1) - \Phi(\tilde{\bf u}_2, \hat{\xi}_2)\|_{\a, \b, \ga}
\lesssim & 
T^{\frac{\a + \b -\ga}2} K(\|\tilde{\bf u}_2\|_{\a, \b, \ga})(1+\|\xi_1\|_{C^{\a-2}} +\|\xi_2\|_{C^{\a -2}} )^3\\
& \times
\big( \|\xi_1- \xi_2\|_{C^{\a -2}}
+ \|\Pi(\nabla X_1, \xi_1) -\Pi(\nabla X_2, \xi_2)\|_{C^{2\a-3}} 
\big).
\end{align*}
Indeed, the cubic power of $\|\xi_i\|_{C^{\a-2}}, i=1,2,$ in the Lipschitz coefficient
in the above estimate arises from the 
terms $P_g$ in $A_1$ and $P_a$ in $A_3$ of $\Phi$ as computed in 
Lemmas \ref{lem-A1} and \ref{lem-A3}, respectively, later and this is reflected in $\tilde K_2(X,\xi)$ in
Proposition \ref{lem:Lip-zeta}, which involves the fourth power of $\|\xi\|_{C^{\a-2}}$.
All other terms in $\Phi$ have lower orders as we can see from Lemmas \ref{lem:5.1}, 
\ref{lem:5.2},  Proposition \ref{lem:Lip-zeta} and \eqref{eq-3.31-0324}.
Consequently, the proof is completed.
\qed

\section{Proof of Proposition \ref{lem:Lip-zeta}} \label{sec-4-0406}

This section is devoted to the proof of   Proposition 
\ref{lem:Lip-zeta}. In order to do it,  we  study  each term appeared 
in $\zeta$ given in \eqref{eq-2.8-0307} separately and divide it into 
four lemmas.

Let us  denote by $A_0$ all of the  terms in $\zeta$ 
except the three terms $A_1$, $A_2$ and $A_3$, that is,
\begin{align*}
A_0= A_0(u, u')=& \Pi_\xi g(\nabla u) +g'(\nabla u) u' \Pi(\nabla X, \xi ) \\
& 
 + \Pi(a(\nabla u_0^T), \bar\Pi_{u'}\xi ) - a'(\nabla u) (u')^2 \Pi(\nabla X, \xi ),
\end{align*}
so that $\zeta=A_0+A_1+A_2-A_3$.
We first show  $A_0$ has the desired estimates.
\begin{lem} \label{lem-A0}
For any ${\bf u}_1 = (u_1,u_1'), {\bf u}_2 = (u_2,u_2') \in \mathcal{B}_T(\la)$,  
we have
\begin{align} 
 & \sup_{0<t \le T} t^{\frac{\ga -\a}2} 
\|A_0({\bf u}_1) (t)\|_{C^{\a +\b -2}}  \label{eq-4.1-0415} \\
\lesssim &  K(\|{\bf u}_1\|_{\a, \b, \ga})
(1+\|\xi\|_{C^{\a-2}})(\|\xi\|_{C^{\a-2} } 
+ \|\Pi(\nabla X, \xi)\|_{C^{2\a -3}}),    
\notag \\
& \sup_{0<t \le T} t^{\frac{\ga -\a}2}  
\|\big(A_0({\bf u}_1) -A_0({\bf u}_2) \big)(t)\|_{C^{\a +\b -2}}  \label{eq-4.2-0415}  \\
\lesssim &  K(\|{\bf u}_1 \|_{\a, \b, \ga}, \|{ \bf u}_2\|_{\a, \b, \ga})
\|{\bf u}_1 -{\bf u}_2\|_{\a, \b, \ga}  \notag   \\
& \hskip 20mm \times  (1+\|\xi\|_{C^{\a-2}})^2 
(\|\xi\|_{C^{\a-2} } + \|\Pi(\nabla X, \xi)\|_{C^{2\a -3}}).     \notag
\end{align}
\end{lem}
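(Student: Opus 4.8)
The plan is to decompose $A_0 = A_0(u,u')$ into its four summands and estimate each one separately in $C^{\a+\b-2}$, uniformly in $t\in(0,T]$ after multiplication by the weight $t^{(\ga-\a)/2}$; the Lipschitz bound will then follow by telescoping each summand and exploiting the multilinearity of all operators involved. First I would record the consequences of the standing exponent conditions that get used throughout: $\a-2<0$, $0<\b\le\a-1$, $2\a+\b-3>0$, $\a+\ga-3>0$ and $\ga>\b+1>\a-\b$, together with $\ga>\a$ and the elementary inclusions $C^{2\a-3}\subset C^{\a+\b-2}$ and $C^{\a+\ga-3}\subset C^{\a+\b-2}$.

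For the first term $\Pi_\xi g(\nabla u)$, since $\a-2<0$, Bony's estimate (Lemma \ref{lem-5.1-0301}-(ii)) gives $\|\Pi_\xi g(\nabla u)(t)\|_{C^{2\a-3}}\lesssim\|\xi\|_{C^{\a-2}}\|g(\nabla u)(t)\|_{C^{\a-1}}$, and then \eqref{eq-2.11-0424} in Lemma \ref{lem-2.8-0424} with \eqref{eq-3.5-0321} bounds $\|g(\nabla u)(t)\|_{C^{\a-1}}$ by $\|g\|_{C^1}\bigl(1+(1+\|X\|_{C^\a})\|\mathbf u\|_{\a,\b,\ga}\bigr)$. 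For the second and fourth terms, I would note that $g'(\nabla u)u'$ and $a'(\nabla u)(u')^2$ lie in $C^\b$ for each fixed $t$ (a product of $C^\b$ functions, $\b>0$), with norms controlled through \eqref{eq-2.11-0424}, \eqref{eq-3.4-0424} and $\|u'\|_{C_TC^\b}\le\|\mathbf u\|_{\a,\b,\ga}$; multiplying against $\Pi(\nabla X,\xi)\in C^{2\a-3}$ and using $2\a+\b-3>0$ in Lemma \ref{lem-5.1-0301}-(ii) places these terms in $C^{(2\a-3)\wedge\b}=C^{2\a-3}\subset C^{\a+\b-2}$, supplying the factor $\|\Pi(\nabla X,\xi)\|_{C^{2\a-3}}$. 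The third term $\Pi(a(\nabla u_0^T),\bar\Pi_{u'}\xi)$ has already been estimated in \eqref{eq:2.18}: it lies in $C_TC^{\a+\ga-3}\subset C_TC^{\a+\b-2}$ with norm $\lesssim\|a\|_{C^1}(1+T^{-(\ga-\a)/2}\|u_0\|_{C^\a})\|u'\|_{\mathcal L_T^\b}\|\xi\|_{C^{\a-2}}$; this is the only summand with a genuine singularity as $t\downarrow0$, but for $t\le T$ one has $t^{(\ga-\a)/2}(1+T^{-(\ga-\a)/2}\|u_0\|_{C^\a})\le T^{(\ga-\a)/2}+\|u_0\|_{C^\a}$, so the weight exactly compensates it, while for the other three summands (which carry no singularity) the weight is harmless. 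Collecting the four bounds, replacing $\|X\|_{C^\a}$ by $\|\xi\|_{C^{\a-2}}$ via \eqref{eq-3.42-0420} and absorbing the polynomial dependence on $\|\mathbf u_1\|_{\a,\b,\ga}$, $\|u_0\|_{C^\a}$, $\|a\|_{C^3}$, $\|g\|_{C^3}$ into $K(\|\mathbf u_1\|_{\a,\b,\ga})$ (and $K_0$), yields \eqref{eq-4.1-0415}.

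For \eqref{eq-4.2-0415} I would subtract the two copies of each summand and telescope: $\Pi_\xi(g(\nabla u_1)-g(\nabla u_2))$ for the first; $(g'(\nabla u_1)-g'(\nabla u_2))u_1'\,\Pi(\nabla X,\xi)+g'(\nabla u_2)(u_1'-u_2')\,\Pi(\nabla X,\xi)$ for the second; $(a'(\nabla u_1)-a'(\nabla u_2))(u_1')^2\Pi(\nabla X,\xi)+a'(\nabla u_2)(u_1'-u_2')(u_1'+u_2')\Pi(\nabla X,\xi)$ for the fourth; and $\Pi(a(\nabla u_0^T),\bar\Pi_{u_1'-u_2'}\xi)$ for the third. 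The increments of $g(\nabla u)$, $g'(\nabla u)$, $a'(\nabla u)$ in $C^\b$ are controlled by \eqref{eq-2.12-0424} and \eqref{eq-3.8-03038} (which, by the remark after \eqref{eq-3.8-03038}, also holds with $g,a$ replacing $g',a'$); these supply the factor $\|\mathbf u_1-\mathbf u_2\|_{\a,\b,\ga}$ at the cost of one additional power of $(1+\|X\|_{C^\a})$, and the remaining factors are handled exactly as in the growth bound, the norms $\|u_i'\|_{C_TC^\b}\le\lambda$ being absorbed into $K$. After reorganizing, this gives \eqref{eq-4.2-0415}.

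The only real obstacle is bookkeeping: verifying once and for all that every paraproduct, resonant product and ordinary product above is well defined and lands in $C^{\a+\b-2}$ under the fixed choice of $\a,\b,\ga$, and keeping the powers of $(1+\|\xi\|_{C^{\a-2}})$ and the polynomial factors $K(\cdot)$ aligned with the statement — in particular checking that the Lipschitz estimate never needs more than $(1+\|\xi\|_{C^{\a-2}})^2$. None of the individual inequalities is delicate once the inclusions recorded at the outset are in place.
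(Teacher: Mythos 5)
Your proposal is correct and follows essentially the same route as the paper's proof: decompose $A_0$ into its four summands, use Bony's estimates together with \eqref{eq-2.11-0424}, \eqref{eq-3.4-0424}, \eqref{eq-3.8-03038} and \eqref{eq-3.5-0321} to land each term in $C^{\a+\b-2}$ (via $C^{2\a-3}$ or $C^{\a+\ga-3}$), absorb the $T^{-\frac{\ga-\a}2}$ singularity of the $\Pi(a(\nabla u_0^T),\bar\Pi_{u'}\xi)$ term with the time weight, and obtain the Lipschitz bound by telescoping and multilinearity. The only cosmetic difference is that you estimate $\Pi_\xi g(\nabla u)$ through the $C^{\a-1}$ norm of $g(\nabla u)$ rather than its $C^\b$ norm, which is equivalent here.
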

\begin{proof} 
We first give the proof of \eqref{eq-4.1-0415}. Applying Lemma \ref{lem-5.1-0301} together with \eqref{eq-3.4-0424} for $g$ instead of $g'$, 
we get
\begin{align*}
\|\Pi_\xi g(\nabla u_1)  (t)\|_{C^{\a + \b -2}}
\lesssim \|g(\nabla u_1)(t)\|_{C^{\b} } \|\xi \|_{C^{\a-2}}
\lesssim K(\|{\bf u}_1\|_{\a, \b, \ga}) (1+\|X\|_{C^\a}) \|\xi \|_{C^{\a-2}},
\end{align*}
where we have used  the relation $0<\a+ \b-2<\b$
for the first inequality.

Recall  that $0<2\a +\b -3< \b$ and $\Pi(\nabla X, \xi ) \in C^{2\a -3}$ is assumed. Applying now Lemma \ref{lem-5.1-0301}  and \eqref{eq-3.4-0424}, we obtain 
\begin{align*}
\|\big( g'(\nabla u_1) u_1' \Pi(\nabla X, \xi ) \big)(t)\|_{C^{2\a  -3}}
\lesssim & \|g'(\nabla u_1)(t)\|_{C^{\b}}\|u_1'(t)\|_{C^\b}\|\Pi(\nabla X, \xi )\|_{C^{2\a -3}} 
\\
\lesssim &K(\|{\bf u}_1\|_{\a, \b, \ga}) (1+\|X\|_{C^\a}) \|\Pi(\nabla X, \xi )\|_{C^{2\a -3}}.
\end{align*}
Thanks to \eqref{eq-3.4-0424}, the similar arguments give that
\begin{align*}
\|\big(a'(\nabla u_1) (u_1')^2  \Pi(\nabla X, \xi ) \big(t)\|_{C^{2\a  -3}}
\lesssim & K(\|{\bf u}_1\|_{\a, \b, \ga}) (1+\|X\|_{C^\a}) 
\|\Pi(\nabla X, \xi )\|_{C^{2\a -3}}.
\end{align*}
By  \eqref{eq:2.18} and $\ga -\a>0$, we easily have
\begin{align*}
\sup_{0< t \leq T} t^{\frac{\ga -\a}{2}}\|\Pi(a(\nabla u_0^T), \bar\Pi_{u'_1}\xi )(t)\|_{C^{\a +\b -2} }
\lesssim & \|a\|_{C^1}(1+ \|u_0\|_{C^\a})\|{\bf u}_1\|_{\a, \b, \ga}\|\xi\|_{C^{\a -2}}.
\end{align*}
Thus, by the above estimates together with $\ga -\a>0$, 
we obtain the desired result \eqref{eq-4.1-0415};
recall Remark \ref{rem:3.1-A} and the same applies hereafter.

Next, we give the proof  of \eqref{eq-4.2-0415}. 
For the term $\Pi_\xi g(\nabla u)$, 
by \eqref{eq-3.8-03038} for $g$ and  the bilinearity of the paraproduct,  we have
\begin{align*}
& \| \Pi_\xi g(\nabla u_1)(t) - \Pi_\xi g(\nabla u_2)(t)\|_{C^{\a +\b -2}}  
\\
& \lesssim  \| g(\nabla u_1)(t) - g(\nabla u_2)(t)\|_{C^{\b}}  \| \xi\|_{C^{\a-2}} \\
& \lesssim K(\|{\bf u}_1\|_{\a, \b, \ga})
\|{ \bf u}_1- { \bf u}_2\|_{\a, \b, \ga}(1+\|X\|_{C^\a})^2\| \xi\|_{C^{\a-2}}, \ t\in [0,T]. 
\end{align*}

For the term $g'(\nabla u) u' \Pi(\nabla X, \xi )$, noting that 
$\Pi(\nabla X, \xi )\in C^{2\a-3}$,  $2\a-3+\b>0$
 and using the estimate for the product $uv$
given in Lemma \ref{lem-5.1-0301}, we may give the estimate on
$$
\| \big(g'(\nabla u_1) u_1' - g'(\nabla u_2) u_2'\big)(t)\|_{C^{\b}}.
$$
But, this is bounded again by the estimate for the product in Lemma \ref{lem-5.1-0301} as
\begin{align*}
&  \lesssim \|g'(\nabla u_1)(t)\|_{C^\b} \| u_1'(t) - u_2'(t)\|_{C^\b}
+ \|\big(g'(\nabla u_1(t)) - g'(\nabla u_2) \big)(t)\|_{C^\b} \| u_2'(t)\|_{C^\b}  \\
& \lesssim K(\|{\bf u}_1\|_{\a, \b, \ga}, \|{\bf u}_2\|_{\a, \b, \ga})
\|{\bf u}_1 -{\bf u}_2\|_{\a, \b, \ga}(1+\|X\|_{C^\a})^2
\end{align*}
for all $t\in [0,T]$, where 
\eqref{eq-3.4-0424} and \eqref{eq-3.8-03038}
have been used for the last inequality.

The term $a'(\nabla u) (u')^2 \Pi(\nabla X, \xi )$ is treated similarly, for example,
by estimating as
\begin{align*}
 \| (u_1')^2(t) - (u_2')^2(t)\|_{C^\b}
 \lesssim &  \big(\| u_1'(t)\|_{C^\b} + \|u_2'(t)\|_{C^\b}\big) 
 \| u_1'(t) - u_2'(t)\|_{C^\b}\\
\lesssim  &
 (\|{\bf u}_1\|_{\a, \b, \ga} + \|{\bf u}_2\|_{\a, \b, \ga})
\|{\bf u}_1 -{\bf u}_2\|_{\a, \b, \ga} 
\end{align*}
for all $t\in [0,T]$.
Thus,  from the above estimates, it follows that 
\begin{align*}
& \|\big( g'(\nabla u_1) u_1' - a'(\nabla u_1) (u_1')^2  
- \big(g'(\nabla u_2) u_2' - a'(\nabla u_2) (u_2')^2\big) \big)(t) \Pi(\nabla X, \xi ) \|_{C^{2\a  -3}} \\
\lesssim &  K(\|{\bf u}_1 \|_{\a, \b, \ga}, \|{ \bf u}_2\|_{\a, \b, \ga})
\|{\bf u}_1 -{\bf u}_2\|_{\a, \b, \ga}
(1+\|X\|_{C^\a})^2\|\Pi(\nabla X, \xi )\|_{C^{2\a -3}}. 
\end{align*}
For the third term $\Pi(a(\nabla u_0^T), \bar\Pi_{u'}\xi )$, from \eqref{eq:2.18} and
$\a + \b -2 <0< \a + \gamma -3$, we have
\begin{align*}
& \|\Pi(a(\nabla u_0^T), \bar\Pi_{u_1'}\xi -\bar\Pi_{u_2'}\xi  ) (t)\|_{C^{\a + \b -2}} \\
& \qquad  \lesssim \|\Pi(a(\nabla u_0^T), \bar\Pi_{u_1'}\xi -\bar\Pi_{u_2'}\xi  )\|_{C_TC^{\a + \gamma -3}} \\
& \qquad \lesssim 
\|a\|_{C^1}(1+ T^{-\frac{\gamma-\a}{2}}\|  u_0\|_{C^\a} ) 
\|{\bf u}_1 -{\bf u}_2\|_{\a, \b, \ga}
 \|\xi\|_{C^{\alpha-2}}.
\end{align*}
Multiply the both sides by  $t^{\frac{\ga -\a}2}$ and take the supremum in $t\in (0,T]$.
Then the factor $T^{-\frac{\gamma-\a}{2}}$ (which is large for small $T>0$)
is absorbed and we obtain the desired
estimate for this term.

Consequently, the proof of \eqref{eq-4.2-0415}  is completed by 
recalling $\a+ \b -2< 2\a  -3$. 
\end{proof}
We proceed to  handle the term $A_1$ defined by \eqref{eq:A1} 
with $R_1 = R_1(u', X) = [\nabla,\bar\Pi_{u'}]X$.

\begin{lem}\label{lem-A1}
For any ${\bf u}_1 = (u_1,u_1'), {\bf u}_2 = (u_2,u_2') \in \mathcal{B}_T(\la)$,  
we have
\begin{align}
& \| A_1({\bf u}_1) \|_{C_TC^{2\a +\b -3}} \lesssim  
K( \|{\bf u}_1 \|_{\a, \b, \ga})(1+ \|\xi\|_{C^{\a-2}})^2 \|\xi\|_{C^{\a-2}},
\label{eq-4.4-0417} 
\\
 &  \| A_1({\bf u}_1) -A_1({\bf u}_2)\|_{C_TC^{2\a +\b -3}}   \label{eq-4.5-0417}
\\
\lesssim &
K( \|{\bf u}_1 \|_{\a, \b, \ga}, \|{\bf u}_2 \|_{\a, \b, \ga})\|{\bf u}_1-{\bf u}_2\|_{\a, \b, \ga} 
(1+\|\xi\|_{C^{\a-2}})^4. \notag
\end{align}
\end{lem}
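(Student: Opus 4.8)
The plan is to estimate separately the three summands of $A_1$ appearing in \eqref{eq:A1}, namely $P_g(\nabla u,\xi)$, $g'(\nabla u)\bar{C}(u',\nabla X,\xi)$ and $g'(\nabla u)\Pi(R_1,\xi)$ with $R_1=[\nabla,\bar\Pi_{u'}]X$, and then to combine them. Throughout, the basic ingredients are: $\|g'(\nabla u)\|_{C_TC^{\a-1}}\lesssim\|g'\|_{C^1}(1+\|u\|_{C_TC^\a})$ from \eqref{eq-2.11-0424}, together with \eqref{eq-2.12-0424} for differences; the bound $\|u\|_{\mathcal{L}_T^\a}\lesssim(1+\|X\|_{C^\a})\|{\bf u}\|_{\a,\b,\ga}$ of \eqref{eq-3.5-0321}, which, since $\nabla:C^\a\to C^{\a-1}$ is bounded, gives $\|\nabla u\|_{C_TC^{\a-1}}\lesssim(1+\|X\|_{C^\a})\|{\bf u}\|_{\a,\b,\ga}$ and, applied to $u_1-u_2$, the analogous Lipschitz bound; and finally \eqref{eq-3.42-0420} to replace every occurrence of $\|X\|_{C^\a}$ by $\|\xi\|_{C^{\a-2}}$ at the end, as in Remark \ref{rem:3.1-A}. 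Observe that $A_1$ involves only $\nabla u$, controlled in $C_TC^{\a-1}$ by the full norm $\|{\bf u}\|_{\a,\b,\ga}$, and not $\nabla u^\sharp$ with its singular-in-time $C^{\ga-1}$ norm, so that no time weight is needed and the target space is genuinely $C_TC^{2\a+\b-3}$.

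For the last two summands the argument is routine. By \eqref{eq-4.1-0406} in Lemma \ref{lem:37}, $R_1\in C_TC^{\a+\b-1}$ with $\|R_1\|_{C_TC^{\a+\b-1}}\lesssim\|u'\|_{C_TC^\b}\|X\|_{C^\a}$, and by Lemma \ref{lem modified commutator} — applicable since $\b\in(0,1)$, $(\a-1)+(\a-2)<0$ and $2\a+\b-3>0$ — one has $\bar{C}(u',\nabla X,\xi)\in C_TC^{2\a+\b-3}$ with norm $\lesssim\|u'\|_{\mathcal{L}_T^\b}\|\nabla X\|_{C^{\a-1}}\|\xi\|_{C^{\a-2}}$. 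Since $2\a+\b-3>0$, Bony's resonant estimate in Lemma \ref{lem-5.1-0301} gives $\Pi(R_1,\xi)\in C_TC^{2\a+\b-3}$; since moreover $(\a-1)+(2\a+\b-3)=3\a+\b-4>0$ and $2\a+\b-3<\a-1$ (because $\a+\b<2$), multiplying by $g'(\nabla u)\in C_TC^{\a-1}$ keeps both terms in $C_TC^{2\a+\b-3}$. Each factor $g'(\nabla u)$ and each factor $u'$ carries at most one power of $1+\|X\|_{C^\a}$, so after \eqref{eq-3.42-0420} these two terms obey $\lesssim K(\|{\bf u}_1\|_{\a,\b,\ga})(1+\|\xi\|_{C^{\a-2}})^2\|\xi\|_{C^{\a-2}}$. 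For $P_g(\nabla u,\xi)$ I would invoke the estimate for the Taylor-expansion remainder in Lemma 2.7 of \cite{GIP-15}, already used in Section \ref{sec-2.1-0307}: since $g\in C_b^3$ and $\nabla u\in C^{\a-1}$ with $\a-1>0$, $P_g(\nabla u,\xi)\in C_TC^{3\a-4}$ with $\|P_g(\nabla u,\xi)\|_{C_TC^{3\a-4}}\lesssim\|g\|_{C^3}(1+\|\nabla u\|_{C_TC^{\a-1}})^2\|\xi\|_{C^{\a-2}}$; the quadratic dependence on $\nabla u$ combined with \eqref{eq-3.5-0321} is exactly what yields the factor $(1+\|X\|_{C^\a})^2$, and the embedding $C^{3\a-4}\hookrightarrow C^{2\a+\b-3}$ (Lemma \ref{lem-5.1-0301}-(i)) is licit because $\b<\a-1$. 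Summing the three contributions and using \eqref{eq-3.42-0420} gives \eqref{eq-4.4-0417}.

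The Lipschitz estimate \eqref{eq-4.5-0417} follows by the usual telescoping: for products, $fg-\tilde f\tilde g=f(g-\tilde g)+(f-\tilde f)\tilde g$; for the compositions $g(\nabla\cdot)$ and $g'(\nabla\cdot)$, the local Lipschitz bound \eqref{eq-2.12-0424} together with \eqref{eq-3.5-0321} applied to $u_1-u_2$; and for $\bar{C}$, $\Pi$ and $R_1=[\nabla,\bar\Pi_{\cdot}]X$, their (multi)linearity. The only ingredient that is not immediate is the Lipschitz dependence of $P_g$ on its first argument, for which one needs a Lipschitz version of the Taylor-remainder lemma, of the form $\|P_g(\nabla u_1,\xi)-P_g(\nabla u_2,\xi)\|_{C^{3\a-4}}\lesssim\|g\|_{C^3}(1+\|\nabla u_1\|_{C^{\a-1}}+\|\nabla u_2\|_{C^{\a-1}})^2\|\nabla u_1-\nabla u_2\|_{C^{\a-1}}\|\xi\|_{C^{\a-2}}$; once this is available, \eqref{eq-3.5-0321} (for $u_1-u_2$ as well) and \eqref{eq-3.42-0420} turn the right-hand side into $(1+\|\xi\|_{C^{\a-2}})^3\|\xi\|_{C^{\a-2}}\le(1+\|\xi\|_{C^{\a-2}})^4$ times $K(\|{\bf u}_1\|_{\a,\b,\ga},\|{\bf u}_2\|_{\a,\b,\ga})\|{\bf u}_1-{\bf u}_2\|_{\a,\b,\ga}$, the quadratic term $P_g$ being the one responsible for the fourth power, while the other two terms are of the same or lower order and are absorbed. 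I expect this Lipschitz version of the Taylor-expansion remainder estimate to be the only genuinely delicate point of the proof.
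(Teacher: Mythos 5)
Your proposal is correct and follows essentially the same route as the paper: the same splitting of $A_1$ into $P_g(\nabla u,\xi)$, $g'(\nabla u)\bar C(u',\nabla X,\xi)$ and $g'(\nabla u)\Pi(R_1,\xi)$, the same use of Lemmas \ref{lem-5.1-0301}, \ref{lem modified commutator}, \ref{lem:37} and of \eqref{eq-3.5-0321}, \eqref{eq-3.42-0420}, the embedding via $2\a+\b-3<3\a-4$, and multilinearity for the Lipschitz bounds, with the quadratic $P_g$ term producing the fourth power of $1+\|\xi\|_{C^{\a-2}}$. The one point you flag as delicate — the Lipschitz version of the Taylor-remainder estimate — is exactly the local Lipschitz part of Lemma 2.7 of \cite{GIP-15}, which the paper invokes directly in \eqref{eq-4.6-430} (there $\|\xi\|_{C^{\a-2}}$ enters additively inside the bracket rather than as a prefactor, which leads to the same $(1+\|\xi\|_{C^{\a-2}})^4$ bound).
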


\begin{proof}
We start with the proof of \eqref{eq-4.4-0417} .
Noting that $2\a-3<0$ and $3\a -4>0$, by Lemma 2.7 of \cite{GIP-15}, we have
$P_g:(\nabla u,\xi) \in C^{\a-1}\times C^{\a-2}
\mapsto P_g(\nabla u,\xi) \in C^{2(\a-1)+\a-2} = C^{3\a -4}$
and 
\begin{align}\label{eq-4.6-0417}
\|P_g(\nabla u_1,\xi)\|_{C_TC^{3\a -4}}\lesssim & \|g\|_{C^2}
(1+ \|\nabla u_1\|_{C_TC^{\a-1}}^2)\|\xi\|_{C^{\a-2}}\\
\lesssim & 
\|g\|_{C^2}
\big(1+ (1+ \|X\|_{C^\a})^2\|{\bf u}_1\|_{\a, \b, \ga}^2 \big) \|\xi\|_{C^{\a-2}}   \notag \\
\lesssim & \|g\|_{C^2}
(1+\|{\bf u}_1\|_{\a, \b, \ga}^2 ) (1+ \|X\|_{C^\a})^2 \|\xi\|_{C^{\a-2}}, \notag
\end{align}
where we have used  \eqref{eq-3.5-0321} for the second inequality.
On the other hand, thanks to  Lemmas \ref{lem modified commutator} and \ref{lem:37} together with \eqref{eq-3.4-0424}, we easily have that the 
$C_TC^{2\a +\b-3}$-norm of the second term of $A_1$
is bounded by 
$$
K(\|{\bf u}_1\|_{\a, \b, \ga}) (1+\|X\|_{C^\a})\| X\|_{C^\a}\|\xi\|_{C^{\a-2}}.
$$
Therefore, we obtain  \eqref{eq-4.4-0417} by the above estimates.

From now on, we show \eqref{eq-4.5-0417}. 
For the first term of $A_1$,  
by the Lipschitz estimate given in Lemma 2.7 of \cite{GIP-15} and \eqref{eq-3.5-0321}, we have
\begin{align} \label{eq-4.6-430}
& \|P_g(\nabla u_1,\xi) - P_g(\nabla u_2,\xi)\|_{C_TC^{3\a -4}}\\
 \lesssim &  \|g\|_{C_b^3} \Big(1+ \big(\|\nabla u_1\|_{C_TC^{\a-1}}+ \|\nabla u_2\|_{C_TC^{\a-1}}\big)^2
+\|\xi\|_{C^{\a-2}}\Big) \|\nabla u_1-\nabla u_2\|_{C_TC^{\a-1}}  \notag \\
 \lesssim & \|g\|_{C_b^3} 
\Big(1+ (1+\|X\|_{C^\a})^2\big(\|{\bf u}_1\|_{\a, \b, \ga}+ \|{\bf u}_2\|_{\a, \b, \ga}\big)^2
+\|\xi\|_{C^{\a-2}}\Big) (1+\|X\|_{C^\a})\|{\bf u}_1-{\bf u}_2\|_{\a,\b,\ga}, \notag
\end{align}
which gives the bound 
\begin{align*}
& \|P_g(\nabla u_1,\xi) - P_g(\nabla u_2,\xi)\|_{C_TC^{3\a -4}}
\\
\lesssim &
K( \|{\bf u}_1 \|_{\a, \b, \ga}, \|{\bf u}_2 \|_{\a, \b, \ga}) 
\|{\bf u}_1 - {\bf u}_2 \|_{\a, \b, \ga}(1+ \|X\|_{C^\a})^3 (1+\|\xi\|_{C^{\a-2}}).
\end{align*}  
Thanks to the multilinear properties of $\bar{C}$, the resonant and $R_1$,
we can easily show that
\begin{align*}
& \| g'(\nabla u_1) \big\{\bar C(u_1', \nabla X,\xi) 
 +\Pi(R_1(u_1',X),\xi)\big\} 
\\
& \qquad \qquad  -g'(\nabla u_2) \big\{\bar C(u_2', \nabla X,\xi) 
 +\Pi(R_1(u_2',X),\xi)\big\}\|_{C_TC^{2\a+ \b-3}}
\\
 \lesssim&  K( \|{\bf u}_1 \|_{\a, \b, \ga}, \|{\bf u}_2 
\|_{\a, \b, \ga})  \| {\bf u}_1-{\bf u}_2 \|_{\a, \b, \ga} (1+ \|X\|_{C^\a})^2\|X\|_{C^\a} \|\xi\|_{C^{\a-2}}.
\end{align*}
For instance, by the linearity of  $\bar{C}$ in $u'$  and Lemma \ref{lem modified commutator}, we have
\begin{align*} 
 \| \bar{C}(u_1',\nabla X,\xi) - \bar{C}(u_2',\nabla X,\xi) \|_{C_T C^{2\a+\b-3}}
= &\| \bar{C}(u_1' -u_2',\nabla X,\xi)  \|_{C_TC^{2\a+\b-3}} \\
\lesssim &
 \| {\bf u}_1-{\bf u}_2 \|_{\a, \b, \ga} \| X\|_{C^{\a}}
 \|\xi\|_{C^{\a-2}}.
\end{align*}
Then, thanks to \eqref{eq-3.4-0424} and \eqref{eq-3.8-03038},  the usual arguments give the desired result
for $g'(\nabla u_1)$ $\bar C(u_1',\nabla X,\xi)$.
 Consequently, the above estimates and the fact that $2\a +\b-3< 3\a -4$ yield \eqref{eq-4.5-0417}.
\end{proof}

For the reader's convenience, we recall the definition of $A_2$ from \eqref{eq:A2}:
\begin{align*}
A_2 =& A_2(u, u')=  -R(a(\nabla u)- a(\nabla u_0^T), u';\xi)  - \Pi_{\bar\Pi_{u'}\xi}
(a(\nabla u)- a(\nabla u_0^T))  \\
& \hskip 20mm  
+ (a(\nabla u)- a(\nabla u_0^T)) R_2,
\end{align*}
where $R$ is defined in Lemma  \ref{lem-2.4-0405} 
and $R_2=R_2(u', X)= [\De,\bar\Pi_{u'}]X$.
The next lemma gives the  estimates on $A_2$.

\begin{lem}\label{lem-A2}
For any ${\bf u}_1 = (u_1,u_1'), {\bf u}_2 = (u_2,u_2') \in \mathcal{B}_T(\la)$,  
we have
\begin{align}
& \| A_2({\bf u}_1) \|_{C_TC^{\a +\b -2}} \lesssim   
K( \|{\bf u}_1 \|_{\a, \b, \ga})(1+\|\xi\|_{C^{\a-2}}) \|\xi\|_{C^{\a-2}},
            \label{eq-4.14-0417} \\
& \| A_2({\bf u}_1) -A_2({\bf u}_2) \|_{C_TC^{\a +\b -2}}   \label{eq-4.15-0417}\\
 \lesssim &
K( \|{\bf u}_1 \|_{\a, \b, \ga}, \|{\bf u}_2 \|_{\a, \b, \ga})
\|{\bf u}_1- {\bf u}_2\|_{\a, \b, \ga}
(1+\|\xi\|_{C^{\a-2}})^2 \|\xi\|_{C^{\a-2}}. \notag
\end{align}
\end{lem}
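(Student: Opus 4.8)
The plan is to estimate separately each of the three summands of $A_2$, namely $R\big(a(\nabla u)-a(\nabla u_0^T),u';\xi\big)$, $\Pi_{\bar\Pi_{u'}\xi}\big(a(\nabla u)-a(\nabla u_0^T)\big)$ and $\big(a(\nabla u)-a(\nabla u_0^T)\big)R_2$ with $R_2=[\De,\bar\Pi_{u'}]X$, and to verify that each of them lies in $C_TC^{\a+\b-2}$ with the claimed dependence on the data; the signs are irrelevant for the norm estimates. Write $b=b(u):=a(\nabla u)-a(\nabla u_0^T)$. By \eqref{eq-5.8-0301} one has $\|b\|_{\mathcal{L}_T^\b}\lesssim T^{\frac{\a-\b-1}2}K(\|{\bf u}\|_{\a,\b,\ga})(1+\|X\|_{C^\a})+K_0$, and $\|X\|_{C^\a}\lesssim\|\xi\|_{C^{\a-2}}$ by \eqref{eq-3.42-0420} (cf.\ Remark \ref{rem:3.1-A}), so $b$ contributes one factor $(1+\|\xi\|_{C^{\a-2}})$ in the growth bound. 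Recall that $\b\in(\frac13,\frac12)\subset(0,1)$ and $\a+\b-2\in(-\frac13,0)$, so the basic estimates apply throughout.

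For the growth bound \eqref{eq-4.14-0417}: since $b\in C_TC^\b$, $u'\in\mathcal{L}_T^\b$ and $\xi\in C^{\a-2}$, Lemma \ref{lem-2.4-0405} (with the roles of its exponents $(\a,\b)$ played by $(\b,\a-2)$) gives $R(b,u';\xi)\in C_TC^{\b+(\a-2)}=C_TC^{\a+\b-2}$ with $\|R(b,u';\xi)\|_{C_TC^{\a+\b-2}}\lesssim\|b\|_{C_TC^\b}\|u'\|_{\mathcal{L}_T^\b}\|\xi\|_{C^{\a-2}}$. For the second term, Lemma \ref{lem:bar}-{\rm(i)} yields $\bar\Pi_{u'}\xi\in C_TC^{\a-2}$ with $\|\bar\Pi_{u'}\xi\|_{C_TC^{\a-2}}\lesssim\|u'\|_{C_TC^\b}\|\xi\|_{C^{\a-2}}$, and then Bony's estimate (Lemma \ref{lem-5.1-0301}-{\rm(ii)}, second item, with the admissible exponent $\a-2\neq0$) gives $\Pi_{\bar\Pi_{u'}\xi}b\in C_TC^{(\a-2)+\b}=C_TC^{\a+\b-2}$ with norm $\lesssim\|\bar\Pi_{u'}\xi\|_{C_TC^{\a-2}}\|b\|_{C_TC^\b}$. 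For the third term, \eqref{eq-4.2-0406} in Lemma \ref{lem:37} (again with $(\a,\b)$ there equal to $(\b,\a)$) gives $R_2\in C_TC^{\a+\b-2}$ with $\|R_2\|_{C_TC^{\a+\b-2}}\lesssim\|u'\|_{C_TC^\b}\|X\|_{C^\a}$; since $\b+(\a+\b-2)=\a+2\b-2>0$ (using $\a>\frac43$, $\b>\frac13$) the product $b\cdot R_2$ is well defined by the last part of Lemma \ref{lem-5.1-0301}-{\rm(ii)} and lies in $C_TC^{\b\wedge(\a+\b-2)}=C_TC^{\a+\b-2}$ with $\|bR_2\|_{C_TC^{\a+\b-2}}\lesssim\|b\|_{C_TC^\b}\|R_2\|_{C_TC^{\a+\b-2}}$. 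Collecting the three bounds, bounding $\|u'\|_{\mathcal{L}_T^\b}\le\|{\bf u}\|_{\a,\b,\ga}$ and $\|b\|_{C_TC^\b}$ via \eqref{eq-5.8-0301}, and replacing $\|X\|_{C^\a}$ by $\|\xi\|_{C^{\a-2}}$, gives \eqref{eq-4.14-0417}.

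For the Lipschitz bound \eqref{eq-4.15-0417} I would set $b_i=a(\nabla u_i)-a(\nabla u_0^T)$ and telescope each of the three terms, using the bilinearity of $R$, of $\Pi$ and of the ordinary product, and the linearity of $\bar\Pi_{\cdot}\xi$ and of $R_2=[\De,\bar\Pi_{\cdot}]X$ in their lower argument: for instance $R(b_1,u_1';\xi)-R(b_2,u_2';\xi)=R(b_1-b_2,u_1';\xi)+R(b_2,u_1'-u_2';\xi)$, $\Pi_{\bar\Pi_{u_1'}\xi}b_1-\Pi_{\bar\Pi_{u_2'}\xi}b_2=\Pi_{\bar\Pi_{u_1'-u_2'}\xi}b_1+\Pi_{\bar\Pi_{u_2'}\xi}(b_1-b_2)$, and $b_1R_2(u_1')-b_2R_2(u_2')=(b_1-b_2)R_2(u_1')+b_2[\De,\bar\Pi_{u_1'-u_2'}]X$. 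Each resulting piece is estimated exactly as in the growth case, now using in addition $b_1-b_2=a(\nabla u_1)-a(\nabla u_2)$ together with the local Lipschitz bound \eqref{eq-3.13-0409}, which provides $\|b_1-b_2\|_{\mathcal{L}_T^\b}\lesssim T^{\frac{\a-\b-1}2}K(\|{\bf u}_1\|_{\a,\b,\ga})\|{\bf u}_1-{\bf u}_2\|_{\a,\b,\ga}(1+\|X\|_{C^\a})^2$, and $\|u_1'-u_2'\|_{\mathcal{L}_T^\b}\le\|{\bf u}_1-{\bf u}_2\|_{\a,\b,\ga}$; the factor $(1+\|X\|_{C^\a})^2\lesssim(1+\|\xi\|_{C^{\a-2}})^2$ from \eqref{eq-3.13-0409} together with one further noise factor (from $\xi$ or $X$) produces exactly the weight $(1+\|\xi\|_{C^{\a-2}})^2\|\xi\|_{C^{\a-2}}$ in \eqref{eq-4.15-0417}; the remaining pieces, carrying only $(1+\|\xi\|_{C^{\a-2}})\|\xi\|_{C^{\a-2}}$, are dominated by this.

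The argument is essentially bookkeeping, and the only delicate points are: (i) checking that every paraproduct, commutator and ordinary product that appears sums its regularity exponents to $\a+\b-2$, in particular that the product $b\cdot R_2$ in the third term is admissible, which rests on $\a+2\b-2>0$, guaranteed by $\a>\frac43$ and $\b>\frac13$; and (ii) in the Lipschitz step, tracking the powers of $\|\xi\|_{C^{\a-2}}$ (one from each noise factor $\xi$ or $X$, two more from the quadratic $X$-dependence in \eqref{eq-3.13-0409}) so that the claimed weight $(1+\|\xi\|_{C^{\a-2}})^2\|\xi\|_{C^{\a-2}}$ and no higher power is obtained. Since all three summands are bounded uniformly in $C_TC^{\a+\b-2}$, no time weight is needed at this stage, and multiplying by $t^{\frac{\ga-\a}2}$ later only improves the bound by a factor $T^{\frac{\ga-\a}2}$.
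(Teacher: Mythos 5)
Your argument is correct and follows essentially the same route as the paper: term-by-term estimation of the three summands of $A_2$ via Lemma \ref{lem-2.4-0405}, Lemma \ref{lem:bar}, Lemma \ref{lem:37}, Bony's estimates and the bounds \eqref{eq-5.8-0301}, \eqref{eq-3.13-0409}, followed by multilinear telescoping for the Lipschitz estimate. The only difference is that the paper writes out the details only for the $R$-term and declares the other two terms and the remaining telescoped pieces "similar", whereas you spell out those exponent checks (e.g.\ $\a+2\b-2>0$ for $b\cdot R_2$) explicitly.
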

\begin{proof}
Let us first give the proof of \eqref{eq-4.14-0417}.  For the first term in $A_2$, using  Lemma \ref{lem-2.4-0405} together with  \eqref{eq-5.8-0301}, we have 
\begin{align} \label{eq-4.18-0417}
 \|R( a(\nabla u_1)- a(\nabla u_0^T), u_1';\xi)\|_{C_TC^{ \a +\b -2}} 
\lesssim & 
\|a(\nabla u_1)- a(\nabla u_0^T)\|_{C_TC^{\b}} \|u_1'\|_{\mathcal{L}_T^\b}
\|\xi\|_{C^{\a-2}}
 \\
\lesssim &  K(\|{\bf u}_1\|_{\a, \b, \ga})
(1+\|X\|_{C^\a})\|\xi\|_{C^{\a -2}}.
\notag 
\end{align}
By Lemmas \ref{lem:bar} and  \ref{lem:37} together with  \eqref{eq-5.8-0301}, the similar arguments yield the same local growth of the other two terms as above.
So, we have \eqref{eq-4.14-0417}.

Next, we show the local Lipschitz estimate \eqref{eq-4.15-0417}. 
The proof is essentially same as that of \eqref{eq-4.14-0417}, 
because of the multilinear properties of $R$, the resonant, the modified paraproduct and $R_2$. Here we only give the proof for the term $R$ as an example. 
By the trilinearity of $R$, the same arguments for \eqref{eq-4.18-0417} give that
\begin{align*}
& \|R( a(\nabla u_1)- a(\nabla u_0^T), u_1';\xi)
-R(a(\nabla u_2)- a(\nabla u_0^T), u_2';\xi)
\|_{C_TC^{ \a+\b  -2}}
\\
\lesssim & \|R(a(\nabla u_1)-a(\nabla u_2), u_1';\xi)
\|_{C_TC^{ \a +\b -2 }}
+ \|R(a(\nabla u_2)- a(\nabla u_0^T), u_1'-u_2';\xi)
\|_{C_TC^{  \a +\b  -2}} \notag\\
\lesssim &  \big(
\|a(\nabla u_1)-a(\nabla u_2)\|_{C_TC^{\b}} \|u_1'\|_{\mathcal{L}_T^\b}  +
\|a(\nabla u_2)- a(\nabla u_0^T)\|_{C_TC^{\b}} \|u_1'-u_2'\|_{\mathcal{L}_T^\b} \big)
\|\xi\|_{C^{\a-2}}
\notag \\
\lesssim &  K(\|{\bf u}_1\|_{\a, \b, \ga}, \|{\bf u}_2 \|_{\a, \b, \ga})
 \|{\bf u}_1 -  {\bf u}_2 \|_{\a, \b, \ga} 
(1+\|X\|_{C^\a})^2\|\xi\|_{C^{\a -2}},
\notag 
\end{align*}
where we have used \eqref{eq-5.8-0301}  and \eqref{eq-3.13-0409}.
\end{proof}

Finally, let us give the estimates on $A_3$. Recall 
the definition \eqref{eq:A3} of $A_3$:
\begin{align*}
A_3
=   P_a(\nabla u, \bar\Pi_{u'}\xi)+ a'(\nabla u) \big\{\bar{C}(u',\nabla X,\bar\Pi_{u'}\xi)
+ \Pi(R_1, \bar\Pi_{u'}\xi)  +u'\bar{C}(u',\xi,\nabla X)  \big\}.
\end{align*}
\begin{lem}\label{lem-A3}
For any ${\bf u}_1 = (u_1,u_1'), {\bf u}_2 = (u_2,u_2') \in \mathcal{B}_T(\la)$,  
we have
\begin{align}
& \|A_3({\bf u}_1) \|_{C_TC^{2\a +\b -3}} \lesssim  
K( \|{\bf u}_1 \|_{\a, \b, \ga})
(1+ \|\xi\|_{C^{\a-2}})^2\|\xi\|_{C^{\a-2}},  \notag\\
&  \|A_3({\bf u}_1) -A_3({\bf u}_2) \|_{C_TC^{2\a +\b -3}} 
 \label{eq-4.26-0417} \\
\lesssim & 
K( \|{\bf u}_1 \|_{\a, \b, \ga}, \|{\bf u}_2 \|_{\a, \b, \ga})
\|{\bf u}_1-{\bf u}_2\|_{\a, \b, \ga} 
(1+ \|\xi\|_{C^{\a-2}})^4. \notag
\end{align}
\end{lem}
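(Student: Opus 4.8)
The plan is to follow closely the proof of Lemma \ref{lem-A1}, treating separately each of the four summands of $A_3$, namely the Taylor remainder $P_a(\nabla u,\bar\Pi_{u'}\xi)$ and the three terms $a'(\nabla u)\bar C(u',\nabla X,\bar\Pi_{u'}\xi)$, $a'(\nabla u)\Pi(R_1,\bar\Pi_{u'}\xi)$ and $a'(\nabla u)u'\bar C(u',\xi,\nabla X)$, and finally combining them using $2\a+\b-3<3\a-4$ (equivalent to $\b<\a-1$) so that every piece lands in $C_TC^{2\a+\b-3}$. The standing tools are the paracontrolled bound \eqref{eq-3.5-0321} for $\|\nabla u\|_{C_TC^{\a-1}}\lesssim(1+\|X\|_{C^\a})\|{\bf u}\|_{\a,\b,\ga}$, the estimate $\|\bar\Pi_{u'}\xi\|_{C_TC^{\a-2}}\lesssim\|u'\|_{C_TC^\b}\|\xi\|_{C^{\a-2}}$ from Lemma \ref{lem:bar}-(i), the bound \eqref{eq-3.4-0424} for $\|a'(\nabla u)\|_{C^\b}$ and its Lipschitz version \eqref{eq-3.8-03038}, and Remark \ref{rem:3.1-A} (i.e.\ \eqref{eq-3.42-0420}) to convert $\|X\|_{C^\a}$ into $\|\xi\|_{C^{\a-2}}$ at the end.

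For the growth bound, the term $P_a(\nabla u,\bar\Pi_{u'}\xi)$ is estimated exactly as in \eqref{eq-4.6-0417}: since $\a>\frac43$ gives $3\a-4>0$ and $(\nabla u,\bar\Pi_{u'}\xi)\in C^{\a-1}\times C^{\a-2}$ with $\a-1\in(0,1)$, Lemma 2.7 of \cite{GIP-15} yields $P_a(\nabla u,\bar\Pi_{u'}\xi)\in C_TC^{3\a-4}$ with norm $\lesssim\|a\|_{C^2}(1+\|\nabla u\|_{C_TC^{\a-1}}^2)\|\bar\Pi_{u'}\xi\|_{C_TC^{\a-2}}$, which is $\lesssim K(\|{\bf u}\|_{\a,\b,\ga})(1+\|X\|_{C^\a})^2\|\xi\|_{C^{\a-2}}$. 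For the three remaining terms, I would note that $\bar C(u',\nabla X,\bar\Pi_{u'}\xi)$ and $\bar C(u',\xi,\nabla X)$ belong to $C_TC^{2\a+\b-3}$ by Lemma \ref{lem modified commutator} (its hypotheses $2\a-3<0$ and $2\a+\b-3>0$, and $\b\in(0,1)$, being our standing assumptions), while $\Pi(R_1,\bar\Pi_{u'}\xi)\in C_TC^{2\a+\b-3}$ because $R_1\in C_TC^{\a+\b-1}$ by \eqref{eq-4.1-0406} of Lemma \ref{lem:37} and $(\a+\b-1)+(\a-2)=2\a+\b-3>0$; multiplication by $a'(\nabla u)\in C_TC^\b$ (and by the extra factor $u'\in C_TC^\b$ in the last term) is then handled by Bony's estimates in Lemma \ref{lem-5.1-0301}-(ii), since $0<2\a+\b-3<\b$. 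Using $\|\nabla X\|_{C^{\a-1}}\lesssim\|X\|_{C^\a}$, $\|R_1\|_{C_TC^{\a+\b-1}}\lesssim\|u'\|_{C_TC^\b}\|X\|_{C^\a}$, and $\|\bar\Pi_{u'}\xi\|_{C_TC^{\a-2}}\lesssim\|u'\|_{C_TC^\b}\|\xi\|_{C^{\a-2}}$, each of the three terms is bounded by $K(\|{\bf u}\|_{\a,\b,\ga})(1+\|X\|_{C^\a})^2\|\xi\|_{C^{\a-2}}$; summing and invoking \eqref{eq-3.42-0420} gives the first inequality.

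For the Lipschitz estimate I would use the multilinearity of all the objects involved: $P_a$ is linear in its second argument, $R_1$ and $\bar C$ are linear in $u'$, and the resonant term and paraproduct are bilinear. This lets me split each difference $A_3({\bf u}_1)-A_3({\bf u}_2)$ into a sum of terms containing exactly one increment — of $\nabla u$, of $u'$, or of $a'(\nabla u)$ — bounded by the local Lipschitz estimates \eqref{eq-2.12-0424}, \eqref{eq-3.8-03038} and the Lipschitz form of the $P_a$-bound (the analogue of \eqref{eq-4.6-430} with $\xi$ replaced by $\bar\Pi_{u'}\xi$), the other factors being controlled by the growth bounds just established. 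The fourth power $(1+\|\xi\|_{C^{\a-2}})^4$ appears precisely in the terms carrying the increment $a'(\nabla u_1)-a'(\nabla u_2)$, whose $C^\b$-norm already contributes $(1+\|X\|_{C^\a})^2$ by \eqref{eq-3.8-03038}, multiplied against a quantity quadratic in $\xi$ such as $\Pi(R_1,\bar\Pi_{u'}\xi)$ or $\bar C(u',\nabla X,\bar\Pi_{u'}\xi)$; all other contributions carry at most three powers of $\|\xi\|_{C^{\a-2}}$ and are absorbed. I expect the only real obstacle to be the bookkeeping of regularity exponents — checking repeatedly that the relevant sums of exponents stay positive so the resonant products and modified commutators are well-defined, and that $2\a+\b-3$ is indeed the worst exponent among all the pieces — but no analytic ingredient beyond the lemmas already proved in Section 2 and the present section is needed.
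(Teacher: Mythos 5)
Your proposal is correct and follows essentially the same route as the paper: the paper likewise bounds the Taylor remainder $P_a(\nabla u,\bar\Pi_{u'}\xi)$ via Lemma 2.7 of \cite{GIP-15}, treats the commutator terms with Lemma \ref{lem modified commutator}, \eqref{eq-4.1-0406} and Bony's estimates together with $\|\bar\Pi_{u'}\xi\|_{C_TC^{\a-2}}\lesssim\|u'\|_{\mathcal{L}_T^\b}\|\xi\|_{C^{\a-2}}$, and gets the Lipschitz bounds from multilinearity, explicitly singling out $A_{3,1}=a'(\nabla u)u'\bar C(u',\xi,\nabla X)$ and otherwise mimicking Lemma \ref{lem-A1}. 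The only cosmetic difference is that the paper attributes the factor $(1+\|\xi\|_{C^{\a-2}})^4$ to the $P_a$-estimate while you locate it in the $a'(\nabla u_1)-a'(\nabla u_2)$ increments against the noise-quadratic terms; both bookkeepings stay within the claimed bound, so nothing is affected.
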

\begin{proof}
Let us first give the estimates on the term $A_{3,1}(u, u'):=a'(\nabla u) u'\bar{C}(u',\xi,\nabla X)$.
Noting that $\beta>2\alpha+\beta-3>0$ and using Lemma \ref{lem-5.1-0301}-(ii) and
 Lemma \ref{lem modified commutator}, we have that
\begin{align} \label{eq-4.13-501}
\| u_1'\bar{C}(u_2',\xi,\nabla X)\|_{C_TC^{2\alpha+\beta-3}}
\lesssim \|u_1'\|_{\mathcal{L}_T^\beta}\|u_2'\|_{\mathcal{L}_T^\beta} \|X\|_{C^{\alpha}}\|\xi\|_{C^{\alpha-2}}
\end{align}
holds for $u_1', u_2'\in \mathcal{L}_T^\beta$. 
Using  \eqref{eq-4.13-501} with $u_1'=u_2'$ and \eqref{eq-3.4-0424}, we get the local growth estimate on $A_{3,1}$, that is, 
\begin{align*}
\| A_{3,1}(u_1, u_1')\|_{C_TC^{2\alpha+\beta-3}} 
\lesssim
K(\|{\bf u}_1\|_{\a, \b, \ga})(1+\|X\|_{C^\a})\|X\|_{C^\a} \| \xi\|_{C^{\a-2}}.
\end{align*}
Moreover, noting  the multilinear  property of $\bar{C}$,  by \eqref{eq-4.13-501}, \eqref{eq-3.4-0424} and \eqref{eq-3.8-03038},
we immediately have the local Lipschitz estimate on $A_{3,1}$, that is, 
\begin{align*}
& \| A_{3,1}(u_1, u_1') -A_{3,1}(u_2, u_2')
\|_{C_TC^{2\alpha+\beta-3}}  \\
\lesssim & 
K(  \|{\bf u}_1\|_{\a, \b,\ga}, 
 \|{\bf u}_2\|_{\a, \b, \ga} \big)\|{\bf u}_1 -{\bf u}_2\|_{\a, \b, \ga}    (1+\|X\|_{C^\a})^2  \|X\|_{C^\a} \|\xi\|_{C^{\a-2} }.  
\end{align*}

Now comparing  the terms in $A_3$ except $A_{3,1}$ with  $A_1$, see \eqref{eq:A1}, we find the 
difference is that  $g'$ and $\xi$ in $A_1$ are replaced by $a'$ and 
$\bar\Pi_{u'}\xi $ in $A_3$, respectively. So,  noting  $\| \bar\Pi_{u'}\xi \|_{C_TC^{\a-2}}\lesssim \|u'\|_{\mathcal{L}_T^\b}\| \xi \|_{C^{\a-2}}$ 
from Lemma \ref{lem:bar}-{\rm (i)}, we can easily conclude the proof by mimicking 
that of  Lemma \ref{lem-A1}. Here, we only explain the factor 
$(1+ \|\xi\|_{C^{\a-2}})^4$ appearing in \eqref{eq-4.26-0417} in a little more detail.  It comes from  the estimate on $P_a$ as we saw in the proof of \eqref{eq-4.5-0417}. In fact, using the similar arguments for  \eqref{eq-4.6-430} and then   \eqref{eq-3.5-0321}, we easily have 
\begin{align*}
 & \|P_a(\nabla u_1, \bar\Pi_{u_1'}\xi) -P_a(\nabla u_2, \bar\Pi_{u_2'}\xi)\|_{C_TC^{3\a -4} } \\
\lesssim & \|a\|_{C^3} 
\Big(1+(\|\nabla u_1\|_{C_TC^{\a-1}} +\|\nabla u_2\|_{C_TC^{\a-1}})^2 
+\|\bar\Pi_{u_2'}\xi\|_{C_TC^{\a-2}}  \Big) \notag\\
&  \qquad \times 
\big( \|\nabla u_1 -\nabla u_2\|_{C_TC^{\a-1}} 
+\|\bar\Pi_{u_1'-u_2'}\xi \|_{C_TC^{\a-2}} \big)  \notag \\
\lesssim & \|a\|_{C^3} 
\Big(1+
(1+\|X\|_{C^\a})^2(\|{\bf u}_1\|_{\a, \b,\ga} +\|{\bf u}_2\|_{\a, \b,\ga})^2 
+\|{\bf u}_2\|_{\a, \b,\ga} \|\xi \|_{C^{\a-2}}  \Big) \notag\\
&  \qquad \times 
\|{\bf u}_1-{\bf u}_2\|_{\a, \b,\ga}\big( 1+\|X\|_{C^\a}
+ \|\xi\|_{C^{\a-2}} \big),\\
\lesssim & K(  \|{\bf u}_1\|_{\a, \b,\ga},
 \|{\bf u}_2\|_{\a, \b, \ga} ) \|{\bf u}_1 -{\bf u}_2\|_{\a, \b, \ga} \\
& \hskip 10mm \times  
(1+ \|X\|_{C^\a})^2(1+ \|\xi\|_{C^{\a-2}}) (1+ \|X\|_{C^\a} + \|\xi\|_{C^{\a-2}}), 
\end{align*}
which gives the desired result.
\end{proof}
To conclude this section, let us give the proof of Proposition \ref{lem:Lip-zeta}.
\begin{proof}[Proof of Proposition \ref{lem:Lip-zeta}]
Noting that $\a +\b-2< 2\a +\b -3$ and $\ga -\a>0$,
we obtain immediately Proposition \ref{lem:Lip-zeta} by Lemmas \ref{lem-A0}-\ref{lem-A3}.
\end{proof}

\section{Convergence of the resonant term}  \label{sec:Renorm}

Recall $\xi\in C^{\a-2}$ and $\nabla X\in C^{\a-1}$.  Then we can define
$\Pi(\nabla X,\xi)\in C^{2\a-3}$, which is denoted by $\nabla X \diamond \xi$
in Lemma \ref{lem:2.9}, though their product
is definable with less regularity: $\nabla X\cdot\xi \in C^{\a-2}$ as we will see
in Lemma 5.3, note that $2\a-3>\a-2$. 

We follow the arguments in Section 5.2 of \cite{GIP-15} noting that
they discuss two dimensional case taking $\T=[0,2\pi]$, while we are 
in one dimension but consider $\nabla X$ instead of $X (=\vartheta)$.
Recall $\T=[0,1]$ in our case and set $\hat u(k) = \int_\T e^{-2\pi ikx}u(x)dx,
k\in \Z$.  Let $\xi$ be the spatial white noise on $\T$.  Then,
$$
E[\hat\xi(k)\hat\xi(k')] = 1_{k=-k'}, \quad k, k'\in \Z,
$$
and $\overline{\hat\xi(k)} = \hat\xi(-k)$.  Note that the mean zero
solution $X$ of \eqref{eq:Xxi} is given by
\begin{equation}  \label{eq:4.X}
X= \int_0^\infty P_t Q\xi dt,
\end{equation}
where $P_t= e^{t\De}$ and $Q\xi = \xi- \hat\xi(0)$, note $\hat\xi(0)=\xi(\T)$.

The following expectation appears to compensate the 0th order term
to define $\nabla X \diamond \xi$ in \eqref{eq:5-diamond}
(cf.\ Lemma 5.6 of \cite{GIP-15}), but it vanishes in our case.

\begin{lem} \label{lem:2.8} 
For $x\in \T$ and $t>0$, we have
\begin{align*}
E[\Pi(\nabla P_tQ \xi,\xi)(x)] 
=  \sum_{k\in \Z\setminus \{0\}} 2\pi ik e^{-4\pi^2 k^2 t} =0.
\end{align*}
\end{lem}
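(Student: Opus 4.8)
The plan is to compute the expectation $E[\Pi(\nabla P_tQ\xi,\xi)(x)]$ directly via Fourier series, using the Littlewood--Paley characterization of the resonant product. Recall that $\Pi(f,g) = \sum_{|i-j|\le 1}\Delta_i f\,\Delta_j g$, where $\Delta_j$ are the Littlewood--Paley blocks. Since all the operations involved (the derivative $\nabla$, the heat semigroup $P_t$, the projection $Q$, and the Paley blocks $\Delta_j$) act diagonally on the Fourier basis $e_k(x)=e^{2\pi i k x}$, I would expand everything in Fourier modes. Writing $\xi = \sum_{k\in\Z}\hat\xi(k)e_k$, we have $Q\xi = \sum_{k\ne 0}\hat\xi(k)e_k$, $P_tQ\xi = \sum_{k\ne 0}e^{-4\pi^2k^2 t}\hat\xi(k)e_k$, and $\nabla P_tQ\xi = \sum_{k\ne 0}(2\pi i k)e^{-4\pi^2k^2 t}\hat\xi(k)e_k$.

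The key step is then to take the expectation of the product. Using the Fourier expansion of the resonant term,
\[
E[\Pi(\nabla P_tQ\xi,\xi)(x)] = \sum_{|i-j|\le 1}\ \sum_{k\ne 0,\,k'}\theta_i(k)\theta_j(k')(2\pi i k)e^{-4\pi^2k^2 t}\,E[\hat\xi(k)\hat\xi(k')]\,e_{k+k'}(x),
\]
where $\theta_j$ is the Fourier multiplier of $\Delta_j$. Now invoke the covariance identity $E[\hat\xi(k)\hat\xi(k')] = 1_{k=-k'}$ stated in the excerpt, which forces $k' = -k$ and hence $e_{k+k'}(x)=1$; moreover for each fixed $k$ the pair of indices $(i,j)$ with $\theta_i(k)\theta_j(-k)\ne 0$ satisfies $|i-j|\le 1$ automatically since $|k|$ and $|-k|$ lie in the same dyadic shell, and $\sum_{i,j}\theta_i(k)\theta_j(-k)=1$. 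This collapses the double sum to
\[
E[\Pi(\nabla P_tQ\xi,\xi)(x)] = \sum_{k\in\Z\setminus\{0\}}(2\pi i k)e^{-4\pi^2 k^2 t},
\]
which is exactly the first displayed equality in the statement. The final equality to zero is then immediate: the summand is an odd function of $k$ (the factor $2\pi i k$ is odd while $e^{-4\pi^2 k^2 t}$ is even), so pairing $k$ with $-k$ makes every term cancel; the series converges absolutely for $t>0$ because of the Gaussian factor, so this rearrangement is legitimate.

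The main obstacle, such as it is, is bookkeeping rather than conceptual: one must be careful that the exchange of $E$ with the (infinite) sums defining $\Pi$ is justified, which follows from absolute summability once $t>0$ (the Gaussian weight $e^{-4\pi^2 k^2 t}$ dominates the polynomial growth $|k|$ and the bounded Paley multipliers), and that the resonant-term restriction $|i-j|\le 1$ is compatible with the diagonal $k'=-k$ — this holds because $k$ and $-k$ have the same modulus and therefore fall in the same Littlewood--Paley annulus, contributing only neighboring block indices. Once these two points are checked, the computation is a one-line symmetry argument. It is worth remarking, as the surrounding text does, that the vanishing here is special to the presence of the derivative $\nabla$: without it the analogous sum $\sum_{k\ne 0}e^{-4\pi^2k^2 t}$ would be a genuine (divergent as $t\downarrow 0$) counterterm, which is the usual source of renormalization in such equations.
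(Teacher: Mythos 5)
Your proof is correct and takes essentially the same route as the paper, whose own argument simply cites the analogous Fourier-mode computation in Lemma 5.6 of \cite{GIP-15} and notes that the extra $\nabla$ contributes the factor $2\pi i k$, whose oddness in $k$ forces the (absolutely convergent, for $t>0$) sum to vanish. Your write-up merely spells out that Littlewood--Paley/Fourier calculation, including the collapse $\sum_{|i-j|\le 1}\rho_i(k)\rho_j(-k)=1$, in full detail.
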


\begin{proof}
Compared with Lemma 5.6 of \cite{GIP-15} (they consider on $\T^2=[0,2\pi]^2$
so that they have $(2\pi)^{-2}$), 
we have $\nabla$ which yields $2\pi ik$ in Fourier mode.
\end{proof}

We assume the following rather mild condition for the mollifier
$\psi$ to cover the noise in \eqref{eq:we}, see Remark \ref{rem:5.1} below.
Let $\psi$ be a measurable and integrable function on $\R$, which has a compact support
and satisfies $\int_\R \psi(x)dx =1$.  We set $\psi^\e(x) = \frac{1}{\e}\psi(\frac{x}\e)$
for $\e>0$.
Note that the support of $\psi^\e$ is included in $\T \, \big(\!=[-\frac12,\frac12)\big)$ for
sufficiently small $\e>0$ and $\psi^\e*\xi(x) =\int_\T\psi^\e(x-y)\xi(y)dy, x\in \T$ is
well-defined (by considering $\psi^\e$ periodically on $\R$ if necessary). 
The compact support property is assumed for simplicity and can be removed.
We call $\xi^\e:=\psi^\e*\xi$ the smeared noise of the spatial white noise $\xi$ on $\T$.

\begin{lem} \label{lem:2.9} (cf.\ Lemma 5.8 of \cite{GIP-15})
Set
\begin{equation}  \label{eq:5-diamond}
\nabla X \diamond \xi = \int_0^\infty  \Pi(\nabla P_tQ \xi,\xi) dt.
\end{equation}
Then we have
\begin{equation}  \label{eq:5.3-B}
E[\|\nabla X \diamond \xi\|_{C^{2\a-3} }^p]<\infty
\end{equation}
for all $\a<\frac32$ and $p\ge 1$.  Moreover, for $\psi$ satisfying the above condition,
we set $\xi^\e= \psi^\e*\xi$ for $\e>0$ and $\nabla X^\e = \int_0^\infty \nabla P_t Q\xi^\e dt
\, \big(= \nabla (-\De)^{-1}Q\xi^\e\big)$.
Then, we have
\begin{equation}  \label{eq:5.4-B}
\lim_{\e\downarrow 0} E[\|\nabla X \diamond \xi- \Pi(\nabla X^\e,\xi^\e)\|_{C^{2\a-3} }^p]=0
\end{equation}
for all $p\ge 1$.  We also have
\begin{align}  \label{eq:c-e}
c_\e :=& E[\nabla X^\e(x) \xi^\e(x)] = E[\Pi(\nabla X^\e, \xi^\e)(x)] \\
= &\int_0^\infty E[\Pi(\nabla P_tQ\xi^\e, \xi^\e)(x)] dt
= \sum_{k\in \Z\setminus\{0\}} \frac{|\hat\psi(\e k)|^2}{4\pi^2k^2} 2\pi ik 
\notag  \\
=& 0,  \notag
\end{align}
for $x\in \T$ and $\e>0$ such that supp$(\psi) \subset \{|x|\le \frac1{2\e}\}$,
where $\hat\psi(y) := \int_\R e^{-2\pi i yx}\psi(x)dx, y \in \R$
is the Fourier transform on $\R$.
\end{lem}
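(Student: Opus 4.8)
The plan is to pass to Fourier coordinates and to the Wiener chaos expansion of $\xi$. Writing $\hat\xi(k)$, $k\in\Z$, for the Fourier coefficients of the white noise (so that $E[\hat\xi(k)\hat\xi(k')]=1_{k=-k'}$, $\hat\xi(-k)=\overline{\hat\xi(k)}$), we have $\widehat{\nabla X}(k)=\tfrac{i}{2\pi k}\hat\xi(k)$ and $\widehat{\nabla X^\e}(k)=\tfrac{i}{2\pi k}\hat\psi(\e k)\hat\xi(k)$ for $k\neq0$, both vanishing at $k=0$, while $\widehat{\xi^\e}(k)=\hat\psi(\e k)\hat\xi(k)$; thus $\nabla X^\e,\xi^\e$ arise from $\nabla X,\xi$ by the Fourier multiplier $k\mapsto\hat\psi(\e k)$ (recall $\hat\psi(0)=1$, $|\hat\psi|\le\|\psi\|_{L^1}$). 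By bilinearity of $\Pi$ and Fubini in $t$ (all $t$-dependence sitting in $\nabla P_tQ\xi$), $\nabla X\diamond\xi=\Pi(\nabla X,\xi)$, whose $m$-th Fourier coefficient is $\sum_{k_1+k_2=m}\chi(k_1,k_2)\tfrac{i}{2\pi k_1}\hat\xi(k_1)\hat\xi(k_2)$, where $\chi(k_1,k_2)=\sum_{|i-j|\le1}\theta_i(k_1)\theta_j(k_2)$ is the resonance cutoff and $\chi(k,-k)=1$. Splitting off the deterministic diagonal part ($k_2=-k_1$) gives the constant of Lemma \ref{lem:2.8}; the same sum carrying the factor $\hat\psi(\e k)\hat\psi(-\e k)$ is $c_\e$ in \eqref{eq:c-e}, and both vanish because $k\mapsto\hat\psi(\e k)\hat\psi(-\e k)$ is even while $k\mapsto 1/k$ is odd. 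This odd weight $1/k$, produced by the derivative in $\nabla X$, is exactly why no renormalization is needed; in particular $\nabla X\diamond\xi$, and each difference below, belongs to the second Wiener chaos.

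The core is a single Littlewood--Paley block estimate. For $\e\in[0,\e_0)$ put $Z^\e:=\Pi(\nabla X^\e,\xi^\e)$, so $Z^0=\nabla X\diamond\xi$. Wick's formula for the fourth mixed moment of $\hat\xi$ applied to the explicit kernels gives, uniformly in $x\in\T$ and for $|i-i'|\le1$, $i\gtrsim q$,
\begin{align*}
E\big[\,|\Delta_q(\Delta_i\nabla X^{\e}\cdot\Delta_{i'}\xi^{\e})(x)|^2\,\big]\;\lesssim\;2^{q-i}\,\sup_{|k|\sim 2^i}|\hat\psi(\e k)|^2 ,
\end{align*}
the factor $2^{q-i}$ arising from the weight $|k_1|^{-2}\sim 2^{-2i}$ times the $\sim 2^{i+q}$ pairs $(k_1,k_2)$ with $|k_1|\sim|k_2|\sim 2^i$, $|k_1+k_2|\sim 2^q$ (the disconnected Wick pairing contributes $|E[\Delta_qZ^\e(x)]|^2=0$, the remaining cross pairing is $O(2^{q-i})$). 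These blocks being $L^2(\Omega)$-orthogonal in $i$ up to $|i-i'|\le C$, summing over $i\gtrsim q$ and using $|\hat\psi(\e k)-1|\le C\e|k|$ --- valid since $\psi\in L^1$ has compact support, so $\int|x\psi(x)|\,dx<\infty$ --- hence $\min(1,(\e 2^i)^2)\le(\e 2^i)^{2\th}$, yields
\begin{align*}
E\big[\,|\Delta_q(Z^0-Z^\e)(x)|^2\,\big]\;\lesssim\;\sum_{i\gtrsim q}2^{q-i}\min\!\big(1,(\e 2^i)^2\big)\;\lesssim\;\e^{2\th}2^{2q\th},\qquad\th\in[0,\tfrac{1}{2}),
\end{align*}
the last step summing $\sum_{i\gtrsim q}2^{i(2\th-1)}\lesssim 2^{q(2\th-1)}$; with $\th=0$ this also gives $E[|\Delta_qZ^0(x)|^2]\lesssim 1$.

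It remains to upgrade these to moments of Hölder norms. As $Z^0$ and $Z^0-Z^\e$ are in the fixed second chaos, Gaussian hypercontractivity promotes the second moment to all moments, $E[|\Delta_q(\cdot)(x)|^p]\lesssim_p(E[|\Delta_q(\cdot)(x)|^2])^{p/2}$; feeding this into the Besov criterion $E\|f\|_{B^{s'}_{p,p}}^p=\sum_q 2^{qs'p}\int_\T E[|\Delta_qf(x)|^p]\,dx$ and the embedding $B^{s'}_{p,p}(\T)\hookrightarrow C^{s'-1/p}(\T)$ gives, with $p$ large: taking $\th=0$ and any $s'<0$, $E[\|Z^0\|_{C^{s}}^p]<\infty$ for all $s<0$, hence \eqref{eq:5.3-B} for all $\a<\tfrac{3}{2}$ (since $2\a-3<0$) and then all $p\ge1$ by Hölder in $\Omega$; and taking $\th\in(0,3-2\a)$ --- admissible because $3-2\a>0$ for $\a<\tfrac{3}{2}$, so automatically $\th<\tfrac{1}{2}$ --- and $s'\in(2\a-3,-\th)$, $E[\|Z^0-Z^\e\|_{C^{2\a-3}}^p]\lesssim\e^{p\th}\to0$, i.e.\ \eqref{eq:5.4-B}. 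The one delicate point is this interpolation for \eqref{eq:5.4-B}: the naive bound $E[|\Delta_q(Z^0-Z^\e)(x)|^2]\lesssim\e 2^q$ is not summable against $2^{q(2\a-3)p}$ once $\a\ge\tfrac{5}{4}$, so one must distribute the $\e$-gain over all dyadic scales $i$ via $\min(1,(\e 2^i)^2)\le(\e 2^i)^{2\th}$, which works precisely because the target exponent $2\a-3$ is strictly negative.
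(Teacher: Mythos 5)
Your proof is correct, and it uses the same basic toolkit as the paper (Fourier/Wick second-moment estimates on Littlewood--Paley blocks, Gaussian hypercontractivity, Besov embedding with large $p$, and the even--odd symmetry argument for $c_\e=0$), but it is organized differently. The paper keeps the time integral and splits it at $t=1$: the contribution of $(1,\infty)$ is handled by purely deterministic Bony/Schauder bounds, reducing convergence there to $E\|\xi-\xi^\e\|_{C^{\a-2}}^{2p}\to 0$, which is proved by dominated convergence (no rate), while the stochastic block-variance computation with the heat-kernel factor is carried out only on $(0,1]$, and the convergence \eqref{eq:5.4-B} for that part is only asserted to follow ``by similar arguments''. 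You instead integrate out $t$ at the level of blocks (using $\int_0^\infty \De_i\nabla P_tQ\xi\,dt=\De_i\nabla X$, which makes your identification of $\nabla X\diamond\xi$ with the blockwise-defined $\Pi(\nabla X,\xi)$ legitimate even though the resonant product is not classically defined at regularity $2\a-3<0$), obtaining the clean kernel $1/(2\pi k_1)$ and the single bound $2^{q-i}$ per block pair; this lets you treat \eqref{eq:5.3-B} and \eqref{eq:5.4-B} in one stroke and, via $|\hat\psi(\e k)-1|\lesssim \e|k|$ (available because $\psi$ is integrable with compact support) together with the interpolation $\min(1,u^2)\le u^{2\th}$, $\th\in(0,3-2\a)$, you even get a quantitative rate $\e^{p\th}$ in \eqref{eq:5.4-B}, filling in precisely the step the paper leaves implicit; your observation that the naive $O(\e 2^q)$ bound fails for $\a\ge\frac54$ correctly identifies why the gain must be spread over dyadic scales. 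The trade-off is mild: your route needs the first moment of $\psi$ (harmless under the stated hypotheses, though the compact-support assumption is flagged in the paper as removable), whereas the paper's time-splitting needs only $|\hat\psi|\le\|\psi\|_{L^1}$ and dominated convergence; conversely your argument avoids the time-splitting and the separate deterministic tail estimate and yields a stronger, quantitative statement.
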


\begin{proof}
We first note that  $c_\e=0$ in \eqref{eq:c-e} follows from the symmetry of
$|\hat{\psi}(\varepsilon k)|^2=\hat{\psi}(\varepsilon k)\hat{\psi}(-\varepsilon k)$
in $k$ due to the fact that $\psi$ is real-valued.

To show \eqref{eq:5.3-B} and \eqref{eq:5.4-B},
we divide the time integral on $(0,\infty)$  in \eqref{eq:5-diamond} and $\nabla X^\e$
into those on $(0,1]$ and $(1,\infty)$.  Let us first show \eqref{eq:5.4-B} for 
the contribution from the integral on $(1,\infty)$, \eqref{eq:5.3-B} for this
part is shown similarly.  Noting that $2\a-3<0<2\a-2$ for 
$\a\in (\frac43,\frac32)$, we have
\begin{align*}
\Big\| & \int_1^\infty \big\{ \Pi(\nabla P_t Q\xi, \xi) - \Pi(\nabla P_t Q\xi^\e, \xi^\e)\big\} dt
\Big\|_{C^{2\a-3}}  \\
& \le \int_1^\infty \big\{ \|\Pi(\nabla P_t Q(\xi-\xi^\e), \xi)\|_{C^{2\a-2}} 
+ \|\Pi(\nabla P_t Q\xi^\e, \xi- \xi^\e)\|_{C^{2\a-2}} \big\} dt \\
& \lesssim \int_1^\infty \big\{ \|\nabla P_t Q(\xi-\xi^\e)\|_{C^\a} \| \xi\|_{C^{\a-2}}
+ \|\nabla P_t Q\xi^\e\|_{C^\a} \|\xi- \xi^\e\|_{C^{\a-2}} \big\} dt \\
& \lesssim \int_1^\infty \big\{ \|P_t Q(\xi-\xi^\e)\|_{C^{\a+1}} \| \xi\|_{C^{\a-2} }
+ \|P_t Q\xi^\e\|_{C^{\a+1}} \|\xi- \xi^\e\|_{C^{\a-2}} \big\} dt \\
& \lesssim  \|\xi-\xi^\e\|_{C^{\a-2}} \big( \| \xi\|_{C^{\a-2} }
+ \| \xi^\e\|_{C^{\a-2}} \big) \int_1^\infty t^{-\frac32}dt,
\end{align*}
where we have used Lemma \ref{lem-5.1-0301}-(i) (noting $2\a-2>0$)
for the second inequality, Lemma \ref{lem-2.3-0309}-(i) for the last inequality,
and note that $\int_1^\infty t^{-\frac32}dt<\infty$.  We then have
the desired convergence for the part arising from the integral on $(1,\infty)$ by showing
\begin{equation}  \label{eq:5.6-B}
E\big[\|\xi-\xi^\e\|_{C^{\a-2}}^p \| \xi^\e\|_{C^{\a-2}}^p \big]
\le E\big[\|\xi-\xi^\e\|_{C^{\a-2}}^{2p}\big]^{\frac12} 
\sup_{0<\e<1} E\big[ \| \xi^\e\|_{C^{\a-2}}^{2p} \big]^{\frac12} 
\underset{\e\downarrow 0}{\longrightarrow} 0,
\end{equation}
under our condition on $\psi$.  Indeed, we first compute
$$
E\big[\|\xi-\xi^\e\|_{B_{2p,2p}^{\a-2}}^{2p}\big]
= \sum_{j=-1}^\infty 2^{2pj(\a-2)} \int_\T dx \, E[|\De_j(\xi-\xi^\e)(x)|^{2p}].
$$
Then, by Gaussian hypercontractivity (equivalence of moments, 
Lemma 4.6 of \cite{GIP-15}),
$$
E[|\De_j(\xi-\xi^\e)(x)|^{2p}]
\le C_p E[\De_j(\xi-\xi^\e)(x)^2]^p,
$$
for some $C_p>0$.
Here, with $K_j := \check\rho_j$ (inverse Fourier transform of
dyadic partition $\{\rho_j\}_{j=-1}^\infty$ of unity), we can rewrite as
\begin{align*}
E[\De_j(\xi-\xi^\e)(x)^2]
&= E\Big[ \Big(\int_\T K_j(x-y)(\xi-\xi^\e)(y)dy\Big)^2\Big] \\
& = \|K_j-K_j*\psi^\e\|_{L^2(\T)}^2 \\
& = \|\rho_j-\rho_j \hat{\psi}^\e\|_{L^2(\Z)}^2 \\
&=  \sum_{k\in\Z} \rho_j(k)^2 \big\{ 1-2\,\text{Re}\,\hat{\psi}^\e(k)
+ |\hat{\psi}^\e(k)|^2\big\}.
\end{align*}
We have used Plancherel identity for the third equality.
However, by our condition on $\psi$,
\begin{align*}
|\hat{\psi}^\e(k)| 
&= \Big|\frac1\e\int_\T \psi\Big(\frac{x}\e\Big) e^{-2\pi ikx}dx \Big| \\
&= \Big|\int_\R \psi(y) e^{-2\pi ik\e y}dy \Big| \le \|\psi\|_{L^1(\R)}
\end{align*}
and this tends to $0$ as $\e\downarrow 0$ for each $k$.
Thus, since $\sum_{k\in \Z}\rho_j^2(k) \sim \int \rho^2(2^{-j}x)dx \sim 2^j$,
we can show by Lebesgue's convergence theorem that
$$
E\big[\|\xi-\xi^\e\|_{B_{2p,2p}^{\a-2}}^{2p}\big]
\underset{\e\downarrow 0}{\longrightarrow} 0,
$$
if $2(\a-2)+1<0$, that is $\a<\frac32$.  Since we have
continuous embedding $B_{p,p}^\a \subset B_{\infty,\infty}^{\a-\frac1p}=C^{\a-\frac1p}$
by Besov embedding theorem (Lemma A.2 of \cite{GIP-15} or Lemma 8 of \cite{GP}),
taking $p$ large, we see
\begin{equation*}
E\big[\|\xi-\xi^\e\|_{C^{\a-2}}^{2p}\big]
\underset{\e\downarrow 0}{\longrightarrow} 0,
\end{equation*}
if $\a<\frac32$ and this implies \eqref{eq:5.6-B}.

For the integral on $(0,1]$, since one can apply Gaussian hypercontractivity,
Lebesgue's convergence theorem and Besov embedding theorem as above, 
we only show \eqref{eq:5.3-B} for this part:
$$
E\Big[ \Big\|\int_0^1 \Xi_tdt \Big\|_{C^{2\a-3} }^p\Big] <\infty,
$$
where $\Xi_t = \Pi(\nabla P_t Q\xi, \xi)$.
To show this, noting that $\nabla$ yields $2\pi ik$ in Fourier mode, similarly to
Lemma 5.8 of \cite{GIP-15} and also as above, we have
\begin{align*}
{\rm Var} &(\De_m \Pi(\nabla P_t\xi,\xi)(x))  \\
= & \sum_{k_1\not=0, k_2} \sum_{|i-j|\le 1} \sum_{|i'-j'|\le 1}
\Big[ 1_{m\lesssim i} 1_{m\lesssim i'} \rho_m^2(k_1+k_2) \rho_i(k_1)\rho_j(k_2)
\rho_{i'}(k_1)\rho_{j'}(k_2) 4\pi^2k_1^2e^{-8\pi^2k_1^2t} \\
& \qquad\qquad +  1_{m\lesssim i} 1_{m\lesssim i'} \rho_m^2(k_1+k_2) \rho_i(k_1)\rho_j(k_2)
\rho_{i'}(k_2)\rho_{j'}(k_1) 4\pi^2k_1k_2e^{-4\pi^2k_1^2t-4\pi^2k_2^2t}
\Big].
\end{align*}
Then, noting that $k_1\in{\rm supp} (\rho_i)$ and $k_2\in{\rm supp} (\rho_j)$
with $|i-j|\le 1$ imply $k_1^2\sim 2^{2i}$ and $|k_1k_2|\sim 2^{2i}$, we have
\begin{align*}
{\rm Var}& (\De_m \Pi(\nabla P_t\xi, \xi)(x))  \\
& \lesssim \sum_{i,j,i',j'} 1_{m\lesssim i} 1_{i\sim j \sim i' \sim j'}
\sum_{k_1, k_2} 1_{{\rm supp} (\rho_m)}(k_1+k_2) 1_{{\rm supp} (\rho_i)}(k_1)
1_{{\rm supp} (\rho_j)}(k_2) 2^{2i} e^{-2tc 2^{2i}} \\
& \lesssim \sum_{i: i\gtrsim m} 2^i 2^m 2^{2i} e^{-2tc 2^{2i}} 
\lesssim \frac{2^m}{t^{3/2}} \sum_{i: i\gtrsim m}  e^{-tc 2^{2i}} 
\lesssim \frac{2^m}{t^{3/2}} e^{-tc 2^{2m}},
\end{align*}
where we used that $\sharp k_1 \le C2^i, \sharp k_2 \le  C2^m$
in the sum $\sum_{k_1,k_2}$ in the second line 
(instead of  $\sharp k_1 \le C2^{2i}, \sharp k_2 \le  C2^{2m}$ in two dimensional
case) and  $2^{3i} e^{-tc 2^{2i}} \le Ct^{-3/2}$ in the third line.

Thus, we obtain
\begin{align*}
E[\|\Xi_t\|_{B_{2p,2p}^{2\a-3}}]
& \lesssim \Big( \sum_{m\ge -1} 2^{(2\a-3)m 2p}
E[\|\De_m \Xi_t\|_{L^{2p}(\T)}^{2p}] \Big)^{1/2p} \\
& \lesssim t^{-3/4} \Big( \sum_{m\ge -1} 2^{(2\a-3)m 2p}
2^{mp} e^{-tcp 2^{2m}} \Big)^{1/2p} \\
& \lesssim t^{-3/4} \Big( \int_{-1}^\infty  (2^x)^{2p(2\a-\frac52)}
e^{-tcp (2^x)^2} dx\Big)^{1/2p}.
\end{align*}
The change of variables $y=\sqrt{t}2^x$ yields
\begin{align*}
E[\|\Xi_t\|_{B_{2p,2p}^{2\a-3}}]
& \lesssim t^{-3/4} \Big( t^{-p(2\a-\frac52)} \int_0^\infty  y^{2p(2\a-\frac52)-1}
e^{-cpy^2}dy \Big)^{1/2p}.
\end{align*}
If $\a>\frac54$, the integral in the right hand side is finite for all large $p$
and therefore $E[\|\Xi_t\|_{B_{2p,2p}^{2\a-3}}]\lesssim t^{-\frac34-\frac12 (2\a-\frac52)}
= t^{-\a+\frac12}$  so that $\int_0^1 E[\|\Xi_t\|_{B_{2p,2p}^{2\a-3}}]dt < \infty$
for all $\a<\frac32$.  
\end{proof}

The constant $c_\e$ usually diverges as $\e\downarrow 0$, and in such case
it is called the renormalization constant.  However, since $c_\e=0$ in our case,
our equation \eqref{eq:Q-ab} does not require any renormalization.

\begin{rem}  \label{rem:5.1}
{\rm (i)} Let us consider the noise $\dot{w}^\e(x)$ introduced in \eqref{eq:we}
with $\{w(x)\}_{x\in \T}$.
It has a representation: $\dot{w}^\e(x)= \psi^\e*\dot{w}(x)$ with
\begin{equation}  \label{eq:psi-ab}
\psi(x) = \frac1{a+b}1_{[-a,b]}(x),
\end{equation}
which satisfies the assumption of Lemma \ref{lem:2.9}.
Indeed, since $\psi^\e(x) = \frac1{(a+b)\e}1_{[-a\e,b\e]}(x)$, we see
\begin{align*}
\psi^\e*\dot{w}(x) &= \int \psi^\e(x-y) \dot{w}(dy)
= \frac1{(a+b)\e} \int_{x-b\e}^{x+a\e}\dot{w}(dy)
= \dot{w}^\e(x).
\end{align*}
{\rm (ii)} At least heuristically, $\xi=\dot{w}(x)$ and
$\nabla X= \nabla (-\De)^{-1} \dot{w} = (-\nabla)^{-1} \dot{w} = 
-w(x)$ is a periodic mean zero Brownian motion, that is
$$
w(x) = B(x)-xB(1)- \int_0^1\{B(y)-yB(1)\}dy,
\quad x \in \T \simeq [0,1),
$$
where $B$ is a standard Brownian motion.
\end{rem}

Finally, we note that the product $\nabla X \cdot \xi$ can be defined directly
for $\xi=\dot{w}(x)$ 
in a usual sense as a limit, but we see $\nabla X\cdot \xi\in C^{-\frac12-\de}$
for every $\de>0$, though $\Pi(\nabla X,\xi) \in C^{2\a-3}, \a<\frac32$.
In particular, we see again that we don't need any renormalization.

\begin{lem}  \label{lem:5.3-B}
Let $w(x)$ be as in Remark \ref{rem:5.1}-(ii)  and let $\xi(x) = \dot{w}(x)$.
Then, $\nabla X^\e(x) \cdot \xi^\e(x)$ converges to
$- \frac12 \nabla (w^2(x))$ as $\e\downarrow 0$ in
$C^{-\frac12-\de}$ for every $\de>0$.
\end{lem}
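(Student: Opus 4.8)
The plan is to work directly in physical space, using the explicit representation $\nabla X = -w$ from Remark \ref{rem:5.1}-(ii) together with $\xi = \dot w$. First I would observe that, since $\nabla X^\e = \psi^\e * \nabla X = -\psi^\e * w = -w^\e$ where $w^\e(x) := (\psi^\e*w)(x)$ is a genuine (mollified) continuous function, and $\xi^\e = \dot w^\e = \nabla w^\e$, the product in question is a classical pointwise product of smooth functions:
\begin{align*}
\nabla X^\e(x)\cdot\xi^\e(x) = -\,w^\e(x)\,\nabla w^\e(x) = -\tfrac12\,\nabla\bigl((w^\e)^2(x)\bigr).
\end{align*}
So the statement reduces to showing $\nabla\bigl((w^\e)^2\bigr) \to \nabla(w^2)$ in $C^{-\frac12-\de}$, and since $\nabla\colon C^{\frac12-\de}\to C^{-\frac12-\de}$ is bounded (Lemma \ref{lem-3.3-0318} or the continuity of $\nabla$ on Hölder--Besov spaces), it is enough to prove $(w^\e)^2 \to w^2$ in $C^{\frac12-\de}$ for every $\de>0$.

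Next I would establish the convergence $w^\e\to w$ in $C^{\frac12-\de}(\T)$. Since $w$ is (a periodicized, mean-zero version of) Brownian motion, $w\in C^{\frac12-\de'}$ almost surely for every $\de'>0$, and mollification by $\psi^\e$ with $\int\psi=1$ and compact support converges in any Hölder space of slightly lower exponent: writing $w^\e(x)-w(x) = \int_\R \psi(y)\bigl(w(x-\e y)-w(x)\bigr)dy$ and using the Hölder seminorm of $w$ at exponent $\frac12-\de'$ together with compact support of $\psi$, one gets $\|w^\e-w\|_{C^{\frac12-\de}} \lesssim \e^{\de'-\de}\|w\|_{C^{\frac12-\de'}}\to0$ for $\de'<\de$. (Alternatively one invokes the standard fact, e.g.\ from \cite{GIP-15}, that $\psi^\e*f\to f$ in $C^{\a-\kappa}$ for $f\in C^\a$ and any $\kappa>0$.) Then, since $C^{\frac12-\de}$ is an algebra (product estimate from Bony's decomposition, Lemma \ref{lem-5.1-0301}-(ii), using $\frac12-\de>0$), we bound
\begin{align*}
\|(w^\e)^2 - w^2\|_{C^{\frac12-\de}} \le \|w^\e - w\|_{C^{\frac12-\de}}\,\|w^\e + w\|_{C^{\frac12-\de}},
\end{align*}
and the right-hand side tends to $0$ because the first factor does and the second stays bounded.

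Applying $\nabla$ and using its continuity $C^{\frac12-\de}\to C^{-\frac12-\de}$ then yields $\nabla X^\e\cdot\xi^\e = -\tfrac12\nabla\bigl((w^\e)^2\bigr) \to -\tfrac12\nabla(w^2)$ in $C^{-\frac12-\de}$, which is the claim; one should also note $w^2 \in C^{\frac12-\de}$ so the limit $-\tfrac12\nabla(w^2)$ genuinely lies in $C^{-\frac12-\de}$, and since $2\a-3 < -\frac12-\de$ for $\a<\frac32$, this regularity is indeed strictly worse than that of the resonant term $\Pi(\nabla X,\xi)\in C^{2\a-3}$, as the lemma's surrounding discussion emphasizes. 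The only mildly delicate point is the quantitative mollification estimate $\|w^\e-w\|_{C^{\frac12-\de}}\to0$ — it requires trading a little regularity of $w$ and is where the "$\de$" is spent — but this is routine once the explicit representation $\nabla X^\e=-w^\e$ is in hand; there is no renormalization because no divergent constant ever enters.
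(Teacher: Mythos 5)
Your proposal is correct and takes essentially the same route as the paper: both reduce the product to $-\tfrac12\nabla\big((\psi^\e*w)^2\big)$ and conclude from $\psi^\e*w\to w$ in $C^{\frac12-\de}$, the only difference being that the paper establishes $\nabla X^\e=-\psi^\e*w$ through an explicit eigenfunction/Green-function computation, whereas you obtain it by commuting the mollification with $\nabla(-\De)^{-1}Q$ and invoking $\nabla X=-w$ from Remark \ref{rem:5.1}-(ii). Two harmless slips: the mollification rate should read $\e^{\de-\de'}$ (not $\e^{\de'-\de}$) for $\de'<\de$, and in the closing side remark the inequality should be $2\a-3>-\tfrac12-\de$ for $\a\in(\tfrac43,\tfrac32)$, which is exactly why the resonant term is \emph{more} regular than the full product.
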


\begin{proof}
Recall that
\begin{align*}
\xi^\e(x) & = \int_\T \psi^\e(x-y)\xi(dy) = \int_\T \psi^\e(x-y)\dot{w}(y) dy \\
& = \int_\T \nabla_x \psi^\e(x-y) w(y) dy,
\end{align*}
since $-\nabla_y \psi^\e(x-y) = \nabla_x \psi^\e(x-y)$.  
On the other hand, 
let $\{\fa_0, \fa_{k,\pm}\in C^\infty(\T)\}_{k=1}^\infty$ be the eigenfunctions
of $-\De$:  $-\De \fa_{k,\pm}(x) = 4\pi^2k^2 \fa_{k,\pm}(x)$,
where
$$
\fa_{k,+}(x) = \sqrt{2} \sin 2\pi kx, \quad
\fa_{k,-}(x) = \sqrt{2} \cos 2\pi kx, \qquad x \in \T \simeq [0,1),
$$
and $\fa_0(x) \equiv 1$.  Then the equation $-\De X^\e = Q\xi^\e$ is solved as
$$
X^\e(x) = \sum_{k=1, \pm}^\infty \frac{\fa_{k,\pm}(x)}{4\pi^2 k^2} 
\int_\T \fa_{k,\pm}(y) \xi^\e(y)dy.
$$
Therefore, by integration by parts,
\begin{align*}
\nabla X^\e(x) & = \sum_{k=1, \pm}^\infty \frac{\fa'_{k,\pm}(x)}{4\pi^2 k^2} 
\int_\T \fa_{k,\pm}(y) dy \int_\T \nabla_y \psi^\e(y-z) w(z) dz  \\
& = - \int_\T \partial_y\partial_x Y_y(x) dy \int_\T \psi^\e(y-z) w(z) dz \\
& = - \int_\T \psi^\e(x-z) w(z) dz, 
\end{align*}
where 
$$
Y_y(x) = \sum_{k=1,\pm}^\infty \frac{\fa_{k,\pm}(x)}{4\pi^2 k^2} \fa_{k,\pm}(y) 
$$
is a (mean zero) solution of
$$
-\De Y_y(x) = \de_y(x)
$$
for each $y\in \T$, that is, $Y_y(x)$ is a Green function of $-\De$. 
Note that $\partial_y\partial_x Y_y(x)= \de_y(x)$.

From these, we have
$$
\nabla X^\e(x) \cdot \xi^\e(x) = -\frac12 \nabla \Big(\int_\T \psi^\e(x-y) w(y) dy \Big)^2.
$$
In particular, since
$
\int_\T \psi^\e(x-y) w(y) dy
$
converges to $w(x)$ in $C^{\frac12-\de}$ as $\e\downarrow 0$
for every $\de>0$, we obtain the conclusion.
\end{proof}

\section{Comparison theorem for SPDE \eqref{eq:2} with smooth noise}
\label{sec:comp}

We show a comparison theorem for \eqref{eq:2} on $\T$ (or $\R$)
with smooth $\xi$.

\begin{lem}  \label{lem:comparison}  
{\rm (1)}  Assume $\chi\in C^1(\R)$ and satisfy $|\chi'(v)|\le C \fa'(v),
v\in \R$ for some $C>0$.  We also assume $\xi\in C^\infty(\T)$.  Then, for
two solutions $v_1, v_2$ of \eqref{eq:2} on $\T$, if $v_1(0)\ge v_2(0)$ holds, 
we have $v_1(t) \ge v_2(t)$ for all $t\ge 0$, where $v_1\ge v_2$ means that 
$v_1(x)\ge v_2(x)$ for all $x\in \T$ for $v_i=(v_i(x))_{x\in\T}, i=1,2$.    \\
{\rm (2)}  In addition, assume $\chi(0)=0$.  Then, if $v(0)\ge 0$, we have
$v(t)\ge 0$ for all $t\ge 0$.
\end{lem}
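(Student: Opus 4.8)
The plan is to reduce part~(2) to part~(1) and to establish part~(1) by linearizing in the difference of the two solutions and running an $L^2$ energy estimate on its negative part. Since $\xi\in C^\infty(\T)$, any solution of \eqref{eq:2} is a classical solution by standard parabolic regularity, hence bounded together with its spatial derivatives on $[0,T]\times\T$; in particular $v_1,v_2$ take values in a fixed compact interval on which $\fa'$ and $\fa''$ are bounded.

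First I would set $w:=v_1-v_2$ and write $\fa(v_1)-\fa(v_2)=a\,w$ and $\chi(v_1)-\chi(v_2)=b\,w$ with
\[
a:=\int_0^1\fa'(v_2+\theta w)\,d\theta,\qquad b:=\int_0^1\chi'(v_2+\theta w)\,d\theta ,
\]
so that $w$ solves the linear divergence-form equation
\[
\partial_t w=\De(a w)+\nabla(b\xi\,w)=\nabla\bigl(a\nabla w+\vec c\,w\bigr),\qquad \vec c:=\nabla a+b\xi .
\]
Here $a\ge c>0$ by $\fa'\ge c$, while $a$, $\nabla a$ and $b$, hence $\vec c$, are bounded on $[0,T]\times\T$ (using the boundedness of $\fa'$, $\fa''$ on the range of the $v_i$, the structural bound $|\chi'|\le C\fa'$, and $\xi\in C^\infty$). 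I would then test this equation against $w_-:=\max(-w,0)\ge0$, integrate by parts over $\T$ (there is no boundary), and use $a\ge c$ together with Young's inequality to absorb the gradient contribution of $\vec c$, arriving at
\[
\tfrac12\tfrac{d}{dt}\|w_-(t)\|_{L^2(\T)}^2+\tfrac{c}{2}\|\nabla w_-(t)\|_{L^2(\T)}^2\le K\,\|w_-(t)\|_{L^2(\T)}^2 .
\]
Because $w_-(0)=(v_2(0)-v_1(0))_+=0$ under the hypothesis $v_1(0)\ge v_2(0)$, Gronwall's inequality forces $w_-\equiv0$, i.e.\ $v_1(t)\ge v_2(t)$ for all $t\ge0$. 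For part~(2), since $\chi(0)=0$ and $\fa(0)$ is a constant, $v\equiv0$ is a classical solution of \eqref{eq:2}; applying part~(1) with $v_1=v$, $v_2\equiv0$ and $v(0)\ge0$ gives $v(t)\ge0$.

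The step I expect to be the main obstacle is making the energy computation rigorous: one needs enough regularity of $w$ — supplied by classical Schauder / De Giorgi--Nash--Moser estimates because $\xi$ is smooth — to justify $\frac{d}{dt}\int_\T\tfrac12 w_-^2\,dx=\int_\T w_-\,\partial_t w_-\,dx$ and the subsequent integration by parts, and one must deal with the fact that $s\mapsto\tfrac12(s_-)^2$ is not $C^2$ (by working with its a.e.\ derivative $-s_-$, or with a smooth convex regularization $\eta_\delta$ of $s\mapsto\tfrac12(s_-)^2$ with $\eta_\delta'\le0$, and passing to the limit). An alternative, available when $\fa\in C^3$ as in the hypotheses of Corollary~\ref{cor:1.3}, is to put the equation in non-divergence form,
\[
\partial_t w=a\,\partial_x^2 w+(2\,\partial_x a+b\xi)\,\partial_x w+(\partial_x^2 a+\partial_x(b\xi))\,w,
\]
and invoke the classical parabolic weak maximum principle on $[0,T]\times\T$ after the substitution $\tilde w=e^{-\lambda t}w$ with $\lambda$ exceeding the zeroth-order coefficient (so as to render it nonpositive); either route yields the comparison.
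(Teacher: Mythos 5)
Your proposal is correct, but it takes a genuinely different route from the paper's. You linearize in the difference $w=v_1-v_2$, writing $\fa(v_1)-\fa(v_2)=aw$, $\chi(v_1)-\chi(v_2)=bw$, put the equation in divergence form $\partial_t w=\nabla(a\nabla w+\vec c\,w)$ with $a\ge c>0$, and close an $L^2$ energy estimate for the negative part $w_-$ (Stampacchia truncation plus Gronwall), or alternatively pass to non-divergence form and invoke the classical weak maximum principle after the $e^{-\la t}$ substitution; this relies on the solutions being classical and bounded with bounded derivatives, which indeed follows from $\xi\in C^\infty(\T)$ and parabolic regularity, and the computation for $w_-$ goes through as you indicate. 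The paper instead argues pointwise at a first touching point: assuming $v_1\ge v_2$ up to time $t$ and $v_1(t,x_0)=v_2(t,x_0)$, it writes $\partial_t(v_1-v_2)(t,x_0)$ via one-sided difference quotients and uses the monotonicity of $\fa$, the structural bound $|\chi'(v)|\le C\fa'(v)$ and $\|\xi\|_{L^\infty}\le M$ to show that the transport contribution is of order $\de$ relative to the diffusion contribution, whence $\partial_t(v_1-v_2)(t,x_0)\ge 0$. The comparison between the two is instructive: in your route the hypothesis $|\chi'|\le C\fa'$ is not actually needed (boundedness of $\chi'$ on the compact range of the solutions suffices once $\xi$ is smooth and bounded), so your argument is in this sense more general, at the price of the regularity bookkeeping you correctly flag (justifying $\tfrac{d}{dt}\int_\T\tfrac12 w_-^2\,dx=\int_\T w_-\,\partial_t w_-\,dx$ and the integration by parts, e.g.\ via a convex regularization of $s\mapsto\tfrac12(s_-)^2$); the paper's argument is shorter and exhibits explicitly how the structural condition $|\chi'|\le C\fa'$ (the same one appearing in Corollary \ref{cor:1.3}) dominates the noise term by the diffusion term, though as usual for such first-contact invariance arguments it is stated somewhat informally and would be made airtight by comparing with a strictly perturbed solution. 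Your treatment of part (2) coincides with the paper's: $v\equiv 0$ solves \eqref{eq:2} because $\chi(0)=0$, and part (1) applies.
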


\begin{proof}
The assertion (2) follows from (1), since $v(t)\equiv 0$ is a solution of 
\eqref{eq:2} by noting $\chi(0)=0$. To show (1), assume that
$v_1(s,x)\ge v_2(s,x)$ for all $0\le s \le t$ and $x\in \T$ and
$v_1(t,x_0)= v_2(t,x_0)$ at some $t\ge 0$ and $x_0\in \T$.  Then, noting
that the solutions $v_1$ and $v_2$ of \eqref{eq:2} are smooth, we have
\begin{align*}
\partial_t &\big( v_1(t,x_0)-v_2(t,x_0)\big) \\
& = \De \big\{\fa(v_1(t,\cdot))-\fa(v_2(t,\cdot))\big\}(x_0)
+ \nabla\big\{ \big(\chi(v_1(t,\cdot))-\chi(v_2(t,\cdot))\big) \xi(\cdot) \big\}(x_0)  \\
& = \lim_{\de\downarrow 0} \Big[ \frac1{\de^2} \sum_\pm
 \big\{\fa(v_1(t,x_0\pm \de))-\fa(v_2(t,x_0\pm \de))\big\}\\
& \qquad\qquad  + \frac1\de \big\{ \big(\chi(v_1(t,x_0+\de))-\chi(v_2(t,x_0+\de))\big) 
\xi(x_0+\de) \big\} \Big].
\end{align*}
However, since $|\chi'(v)| \le C \fa'(v)$ and $|\xi(x)|\le M$ for some $M>0$, we see
\begin{align*}
\frac1\de &\big|\chi(v_1(t,x_0+\de))-\chi(v_2(t,x_0+\de))\big|\, \big|\xi(x_0+\de) \big|\\
&\le \frac{M}\de \int_{v_2(t,x_0+\de)}^{v_1(t,x_0+\de)} |\chi'(v)| dv  \\
& \le \de CM \cdot \frac1{\de^2} \big\{\fa(v_1(t,x_0+ \de))-\fa(v_2(t,x_0+ \de))\big\},
\end{align*}
which implies 
$$
\partial_t \big( v_1(t,x_0)-v_2(t,x_0)\big) \ge 0.
$$
This shows that $v_2$ cannot exceed $v_1$ at $t$ and $x_0$.
\end{proof}

\section*{Acknowledgements}
T.\ Funaki was supported in part by JSPS KAKENHI, Grant-in-Aid for Scientific Researches (A) 18H03672 and (S) 16H06338.
M.\ Hoshino was supported in part by JSPS KAKENHI, Early-Career Scientists 19K14556.
S.\ Sethuraman was supported by grant ARO W911NF-181-0311, 
a Simons Foundation Sabbatical grant, and by a Japan Society for the Promotion of Science Fellowship.
B.\ Xie was supported in part by JSPS KAKENHI, Grant-in-Aid for Scientific Research (C) 16K05197 and  (C) 20K03627.

\end{document}